\documentclass[pdflatex,sn-mathphys-num]{sn-jnl}
\usepackage{graphicx}%
\usepackage{multirow}%
\usepackage{amsmath,amssymb,amsfonts}%
\usepackage{amsthm}%
\usepackage{mathrsfs}%
\usepackage[title]{appendix}%
\usepackage{xcolor}%
\usepackage{textcomp}%
\usepackage{manyfoot}%
\usepackage{booktabs}%
\usepackage{algorithm}%
\usepackage{algorithmicx}%
\usepackage{algpseudocode}%
\usepackage{listings}%
\usepackage{float}%
\hypersetup{hypertexnames=false}

\numberwithin{equation}{section}

\theoremstyle{thmstyleone}
\newtheorem{theorem}{Theorem}[section]
\newtheorem{proposition}[theorem]{Proposition}
\newtheorem{lemma}[theorem]{Lemma}

\theoremstyle{thmstylethree}

\theoremstyle{thmstyletwo}
\newtheorem{remark}[theorem]{Remark}

\makeatletter
\renewcommand{\toclevel@subparagraph}{1}
\makeatother

\begin{document}

\title[Article Title]{Harmonic Map Heat Flow Coupled to a Green-Potential}

\author{Zhe Zhang}

\affil{
    \orgdiv{School of Arts and Sciences}, 
    \orgname{North China Institute of Aerospace Engineering}, 
    \orgaddress{
                            \street{Aimin East Road}, 
                            \city{Langfang}, 
                            \postcode{065099}, 
                            \state{Hebei}, 
                            \country{China}
                           }
      \textit{Correspondence: zhangzhe@nciae.edu.cn}
        }


\abstract{
We introduce a Green--potential heat flow for maps from a closed
Riemannian surface. Inspired by
Parker--Wolfson renormalization~\cite{ParkerWolfson1993,Parker1996}, the conformal factor is reconstructed
at each time from an energy-based source determined by the current
map. The resulting parabolic--elliptic system incorporates
renormalization into the evolution while leaving the Dirichlet energy
functional and its harmonic critical points unchanged. We establish
local well-posedness for $H^3$ initial data and a continuation criterion
in terms of energy concentration. For sufficiently large coupling, we exclude finite-time bubbling and obtain
global solutions. At infinite time, the energy identity provides only
weighted $L^2$ control of the tension field. Localizing to regular
regions of concentration charts yields harmonic profiles along
subsequences of prescribed times. We represent these profiles in the
flow's packet charts, organize them into a finite bubble tree, and
decompose the limiting energy into harmonic component energies and neck energies. A critically normalized radial example
illustrates how the modified flow can prevent finite-time blow-up and
produce harmonic bubbles with no-neck property.
}

\keywords{Harmonic map heat flow, Conformal, Green potential, Bubble-tree convergence, Parabolic--elliptic system}

\maketitle
\section{Introduction}

Energy concentration is a central feature of harmonic maps from surfaces.
Parker--Wolfson renormalization describes this concentration through
energy-determined centers and scales, leading to a tree of rescaled
components \cite{ParkerWolfson1993,Parker1996}.  This suggests an evolution
problem: can renormalization guide the choice of domain geometry while
the map evolves?

The conformal invariance of the 2-dimensional Dirichlet energy makes this
question natural.  Changing the conformal factor changes the metric and its gradient, but preserves both the energy functional and its
critical points.  One may therefore modify the evolution near
concentration without changing the harmonic maps sought by the flow.
We aim to incorporate energy-based renormalization into the equation
itself, with the conformal geometry determined by the flow.

The classical harmonic map heat flow admits global weak solutions with
possible concentration singularities \cite{Struwe1985}.  Other geometric
flows allow the domain metric to vary, including Teichm\"uller harmonic
map flow and harmonic Ricci flow
\cite{RupflinTopping2019,BuzanoRupflin2017}.
In Park's conformal heat flow, the conformal factor has its own evolution
equation and retains the preceding energy distribution
\cite{Park2022CHF}.  Here the geometry is instead reconstructed
instantaneously from an energy-based renormalization of the current map.
This couples the conformal response to the spatial configuration being
renormalized, without introducing a second initial datum or a time-history
term.

Let $(\Sigma,h)$ be a closed Riemannian surface and let $(N,g_N)$ be a closed $n$-dimensional Riemannian manifold.  We consider the parabolic--elliptic system
\begin{equation}
 \partial_t f=e^{-2\Phi(f)}\tau_h(f),\qquad
 -\Delta_h\Phi(f)=S_f,\qquad
 \int_\Sigma\Phi(f)\,dA_h=0.
 \label{eq:intro-gphf}
\end{equation}
The mean-zero source $S_f$, defined in Section~2, is a smooth
energy-based construction inspired by Parker--Wolfson renormalization;
it depends only on the current map.  We call
\eqref{eq:intro-gphf} the \emph{Green--potential heat flow} (GPHF).

Section~2 introduces the packet construction and the Green--potential heat flow. Section~3 establishes local well-posedness for $H^3$ initial maps and the continuation alternative. Section~4 excludes finite-time bubbling. The key observation is that the Green potential slows energy transport near a concentrating packet: the concentration would then have to persist back to a fixed earlier time, contradicting smoothness. Section~5 
studies long-time behavior of the global solutions and decomposes the limiting energy into harmonic bubble energies and possible neck energies. We conclude with a radial example illustrating the critical cylindrical regime, presenting the result while leaving its detailed analysis to a separate note.

\section{The Green--Potential Heat Flow}
\label{sec:green-potential-heat-flow}
 For $f\in H^1(\Sigma,N)$,
write
\begin{equation*}
 d\mu_f=e_h(f)\,dA_h,
 \qquad
 e_h(f)=\frac12|df|_h^2,
 \qquad
 E_h(f):=\mu_f(\Sigma).
\end{equation*}
Choose a geometric radius $0<r_* \leq\frac1{10}\operatorname{conv}(\Sigma,h)$
such that every ball $\overline{B_{2r}(p)}$, with
$p\in\Sigma$ and $0<r<r_*$, is strongly convex, namely there is a constant
$c_{\mathrm{cvx}}>0$ such that
\begin{equation}
 \nabla_x^2\!\left(\frac12d_h(x,y)^2\right)
 \geq c_{\mathrm{cvx}}h_x
 \label{eq:squared-distance-convexity}
\end{equation}
for all $x,y\in\overline{B_{2r}(p)}$.

Fix a nonincreasing cutoff
\[
 \chi\in C_c^\infty([0,\infty)),
 \qquad
 0\leq\chi\leq1,
 \qquad
 \chi\equiv1\ \text{on }[0,1],
 \qquad
 \operatorname{supp}\chi\subset[0,2],
\]
and set
\begin{equation*}
 \chi_{p,r}(y):=\chi\!\left(\frac{d_h(y,p)}r\right),
 \qquad
 M_f(p,r):=\int_\Sigma\chi_{p,r}\,d\mu_f.
\end{equation*}
Whenever $M_f(p,r)>0$, define the normalized localized energy
measure
\begin{equation*}
 d\eta_{f,p,r}(y)
 :=\frac{\chi_{p,r}(y)}{M_f(p,r)}\,d\mu_f(y).
\end{equation*}
The corresponding Fr\'echet functional is
\begin{equation*}
 \mathcal V_{f,p,r}(x)
 :=\int_\Sigma d_h(x,y)^2\,d\eta_{f,p,r}(y),
 \qquad x\in\overline{B_{2r}(p)}.
\end{equation*}
We define the packet barycenter and variance by
\begin{equation}\label{eq:packet-barycenter}
	b_f(p,r) :=\operatorname*{argmin}_{x\in\overline{B_{2r}(p)}}
       \mathcal V_{f,p,r}(x),\quad
 \sigma_f^2(p,r)
 :=\mathcal V_{f,p,r}\bigl(b_f(p,r)\bigr). 
\end{equation}
\begin{remark}
	This center-scale pair
$\bigl(b_f,\sigma_f\bigr)$  is inspired by the energy-based renormalization procedure of Parker and Wolfson, in which concentrating energy is recentered and rescaled using quantities determined by the energy measure \cite[\S4, pp.~83--84]{ParkerWolfson1993}.  \end{remark}
\begin{proposition}
\label{prop:packet-barycenter}
For $\forall$ $\varepsilon>0$, set
\begin{equation*}
 \mathcal A_\varepsilon
 :=\bigl\{(f,p,r)\in H^3(\Sigma,N)\times\Sigma\times(0,r_*):
          M_f(p,r)>\varepsilon\bigr\}.
\end{equation*}
For each $(f,p,r)\in\mathcal A_\varepsilon$, $\exists!$ 
$b_f(p,r)\in B_{2r}(p)$ which depends smoothly on 
 $(f,p,r)$.
\end{proposition}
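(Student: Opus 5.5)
We prove existence, uniqueness, interiority and smooth dependence in that order. The main tool is strong convexity of $\mathcal V_{f,p,r}$ on $\overline{B_{2r}(p)}$, followed by the implicit function theorem applied to the first-order condition $\nabla\mathcal V_{f,p,r}(b)=0$.

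\emph{Existence and uniqueness.} Fix $(f,p,r)\in\mathcal A_\varepsilon$. The measure $\eta_{f,p,r}$ is a probability measure supported in $\overline{B_{2r}(p)}$, since $\operatorname{supp}\chi_{p,r}\subset\overline{B_{2r}(p)}$. Because $f\in H^3\subset C^1$, its density $\chi_{p,r}e_h(f)/M_f$ is continuous and bounded. For $x,y\in\overline{B_{2r}(p)}$ with $r<r_*$, the function $d_h(\cdot,y)^2$ is smooth, since $r_*$ is below the convexity (hence injectivity) radius. Differentiating under the integral and using \eqref{eq:squared-distance-convexity}, we obtain
\[
 \nabla^2_x\mathcal V_{f,p,r}(x)=\int_\Sigma\nabla_x^2\bigl(d_h(x,y)^2\bigr)\,d\eta_{f,p,r}(y)\geq 2c_{\mathrm{cvx}}\,h_x .
\]
Hence $\mathcal V_{f,p,r}$ is strictly geodesically convex on the strongly convex set $\overline{B_{2r}(p)}$. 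By compactness a minimizer exists, and strict convexity along the unique minimizing geodesic joining two candidate minimizers gives uniqueness.

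\emph{Interiority.} We must show $b_f(p,r)\notin\partial B_{2r}(p)$. Take $x\in\partial B_{2r}(p)$. The gradient is
\[
 \nabla\mathcal V(x)=-2\int\exp_x^{-1}(y)\,d\eta(y),
\]
and we pair it with the outward direction $v=-\exp_x^{-1}(p)/|\exp_x^{-1}(p)|$. By convexity of $B_{2r}(p)$, every $y$ in the support lies in $\overline{B_{2r}(p)}$, so comparison geometry for small balls gives $\langle -\exp_x^{-1}(y),v\rangle\geq 0$. This yields $\langle\nabla\mathcal V(x),v\rangle\geq0$, with strict inequality unless $\eta$ is concentrated on the boundary point $x$; that is excluded because $\eta$ has a density. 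Strictness is cleanest using $\chi$: if $\eta$ gives positive mass to $\overline{B_r(p)}$, which is where $\chi=1$, then a quantitative lower bound holds. If all mass lay in the annulus, we still use that the density is absolutely continuous, so it cannot sit on the single point $x$. Therefore the gradient points strictly outward at every boundary point. The minimizer of a convex function over the ball then cannot lie on the boundary, so it is an interior critical point. Equivalently, the constrained minimizer satisfies $\nabla\mathcal V(b)=0$ with $b\in B_{2r}(p)$. This boundary analysis is the step I expect to be the main obstacle. If the pure convexity argument is delicate, I would instead compare $\mathcal V(x)$ with $\mathcal V(x_s)$, where $x_s$ is pushed slightly along the geodesic toward $p$, and use the first variation together with the strict convexity of the sphere $\partial B_{2r}(p)$.

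\emph{Smooth dependence.} Consider the map
\[
 F(f,p,r,x):=\int_\Sigma \chi\!\left(\tfrac{d_h(y,p)}{r}\right)\nabla_x\tfrac12 d_h(x,y)^2\,e_h(f)(y)\,dA_h(y)\in T_x\Sigma .
\]
Its zero set, with $M_f>0$, is exactly $\{x=b_f(p,r)\}$. The dependence on $f\in H^3$ is through the quadratic, hence smooth, map $f\mapsto e_h(f)\in H^2\subset C^0$, composed with integration against a smooth kernel; therefore $F$ is smooth in $f$. The dependence on $(p,r)$ is smooth because $\chi$ and $d_h(\cdot,p)$ are smooth wherever $\chi'\neq0$: there $d_h(y,p)\in[r,2r]$, away from the diagonal and within the injectivity radius. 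The $x$-derivative at the zero is
\[
 D_xF=M_f\,\nabla^2\mathcal V\geq 2\varepsilon c_{\mathrm{cvx}}>0,
\]
so it is invertible uniformly on $\mathcal A_\varepsilon$. The implicit function theorem on the open set $\mathcal A_\varepsilon$ (open since $M_f$ is continuous) then gives a smooth local solution $x(f,p,r)$. Uniqueness of the minimizer identifies this local solution with $b_f(p,r)$, which completes the proof.
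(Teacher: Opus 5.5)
Your proposal is correct and follows essentially the same route as the paper: uniform geodesic convexity of $\mathcal V_{f,p,r}$ from \eqref{eq:squared-distance-convexity} gives existence and uniqueness, a first-variation argument in the inward direction $\exp_x^{-1}(p)$ excludes a boundary minimizer, and the implicit function theorem applies because the Hessian is uniformly invertible. The boundary step you flagged as the main obstacle is settled in the paper by the convexity of $s\mapsto\frac12 d_h(p,\gamma(s))^2$ along the geodesic $\gamma$ from $x$ to $y$; since $\chi(d_h(y,p)/r)>0$ forces $d_h(y,p)<2r$, the measure $\eta_{f,p,r}$ lives on the open ball, so the inequality is strict pointwise and no absolute-continuity argument is needed (also note $D_xF=\tfrac{M_f}{2}\nabla^2\mathcal V_{f,p,r}$, a harmless factor of two).
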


\begin{proof}
By \eqref{eq:squared-distance-convexity}, for every
$x\in\overline{B_{2r}(p)}$ and $V\in T_x\Sigma$, we have
\begin{equation*}
	\nabla_x^2\mathcal V_{f,p,r}(V,V)\geq2c_{\mathrm{cvx}}|V|_h^2.
\end{equation*}
Thus $\mathcal V_{f,p,r}$ is uniformly strongly geodesically convex on
$\overline{B_{2r}(p)}$ and has a unique minimizer.  Suppose, for a
contradiction, that the minimizer $x$ lies on
$\partial B_{2r}(p)$.  Let
\[
 \nu_{\mathrm{in}}(x):=\frac{\exp_x^{-1}(p)}{2r}
\]
be the inward unit normal.  For $y\in B_{2r}(p)$, let $\gamma$ be the
minimizing geodesic from $x$ to $y$.  Since $B_{2r}(p)$ is strongly
convex, $\gamma((0,1])$ lies in the ball.  The strict convexity of
$s\mapsto\frac12d_h(p,\gamma(s))^2$, together with
$d_h(p,y)<d_h(p,x)=2r$, gives
\[
 \left\langle\exp_x^{-1}(y),\nu_{\mathrm{in}}(x)\right\rangle_h>0.
\]
Consequently,
\begin{equation*}
 D\mathcal V_{f,p,r}(x)[\nu_{\mathrm{in}}]
 =-2\int_\Sigma
   \left\langle\exp_x^{-1}(y),\nu_{\mathrm{in}}(x)\right\rangle_h
   \,d\eta_{f,p,r}(y)<0,\text{ contradicting minimality. }
\end{equation*}
For $x\in B_{2r}(p)$, denote
\begin{equation*}
 \mathcal B(f,p,r;x)
 :=\int_\Sigma\exp_x^{-1}(y)\,d\eta_{f,p,r}(y)
 =\frac{-1}{2}\nabla_x\mathcal V_{f,p,r}(x).
\end{equation*}
Thus $b_f(p,r)$ is the unique interior solution of
$\mathcal B(f,p,r;x)=0$. Smooth dependence follows from the implicit-function theorem and the fact that  
$D_x\mathcal B
 =-\frac12\nabla_x^2\mathcal V_{f,p,r}$ is invertible with inverse
bound depending only on $c_{\mathrm{cvx}}$. 
\end{proof}
The scale ratio $\rho_f(p,r) :=\frac{\sigma_f(p,r)}{r}$   is scale invariant and bounded above by 4.  Indeed, by Proposition~\ref{prop:packet-barycenter} the barycenter lies
in $B_{2r}(p)$, so $d_h(y,b_f(p,r))<4r$ on the support of the packet.

Fix a packet energy scale
$0<\varepsilon_{\mathrm{pkt}}\leq E_0$, where $E_0$ is a uniform upper
bound for the total energy.  Choose smooth nondecreasing functions
\begin{align*}
 \Gamma_1&:[0,\infty)\longrightarrow[0,1],
 &\Gamma_1(s)&=0 &&\text{for }s\leq\frac{\varepsilon_{\mathrm{pkt}}}{4},
 &\Gamma_1(s)&=1 &&\text{for }s\geq\frac{\varepsilon_{\mathrm{pkt}}}{2},
 \\
 \Gamma_2&:[0,\infty)\longrightarrow[0,1],
 &\Gamma_2(\rho)&=0 &&\text{for }\rho\leq\rho_-,
 &\Gamma_2(\rho)&=1 &&\text{for }\rho\geq\rho_+,
\end{align*}
where
\begin{equation*}
 \rho_-:=\frac1{16}\sqrt{\frac{\varepsilon_{\mathrm{pkt}}}{E_0}},
 \qquad
 \rho_+:=\frac14\sqrt{\frac{\varepsilon_{\mathrm{pkt}}}{E_0}}.
\end{equation*}
On the set where
$M_f(p,r)\leq\varepsilon_{\mathrm{pkt}}/4$, expressions involving
$\rho_f(p,r)$ are assigned weight zero.  Thus the combined packet
acceptance weight
\begin{equation*}
 W_f(p,r)
 :=\Gamma_1\bigl(M_f(p,r)\bigr)
   \Gamma_2\bigl(\rho_f(p,r)\bigr)
\end{equation*}
extends smoothly across the zero-mass region.

Define the total packet weight by
\begin{equation}
 \mathcal W(f)
 :=\int_0^{r_*}\int_\Sigma
 W_f(p,r)\,\frac{dA_h(p)}{r^2}\,\frac{dr}{r}.
 \label{eq:total-packet-weight}
\end{equation}
The factor $r^{-2}$ normalizes the area of an $r$-ball, while
$dr/r$ is invariant under rescaling.

Fix geometric constants $c_{\mathrm{vol}},C_{\mathrm{vol}}>0$ such that
\begin{equation}
 c_{\mathrm{vol}}r^2
 \leq \operatorname{Area}_h(B_r(p))
 \leq C_{\mathrm{vol}}r^2
 \qquad(p\in\Sigma,\ 0<r\leq4r_*).
 \label{eq:packet-volume-comparison}
\end{equation}

\begin{lemma}
\label{lem:total-packet-weight}
\begin{equation} \label{eq:total-packet-weight-bound}
\mathcal W(f)
 \leq 2^{17}C_{\mathrm{vol}} \left(\frac{E_0}{\varepsilon_{\mathrm{pkt}}}\right)^3. 
\end{equation} \end{lemma}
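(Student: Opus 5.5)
The plan is to bound $W_f(p,r)$ pointwise by an explicit energy integral over an annulus, and then to integrate that bound over $(p,r)$ using Fubini and the volume comparison \eqref{eq:packet-volume-comparison}. Throughout, $W_f(p,r)\neq0$ forces two things: $M_f(p,r)>\varepsilon_{\mathrm{pkt}}/4$, and $\rho_f(p,r)>\rho_-$, i.e.\ $\sigma_f^2(p,r)>\rho_-^2r^2=\varepsilon_{\mathrm{pkt}}r^2/(256E_0)$. Both factors of $W_f$ lie in $[0,1]$.

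\emph{Step 1 (spread condition becomes energy in an annulus).} Since $b_f(p,r)$ minimizes $\mathcal V_{f,p,r}$ over $\overline{B_{2r}(p)}$ (Proposition~\ref{prop:packet-barycenter}), we have $\sigma_f^2\le\mathcal V_{f,p,r}(p)=\int d_h(p,y)^2\,d\eta_{f,p,r}$. Set $\delta:=\rho_-/\sqrt2$ and split this integral at $d_h(p,y)=\delta r$. The inner part contributes at most $\delta^2r^2=\rho_-^2r^2/2$. Hence on the support of $W_f$,
\[
 \frac{\rho_-^2r^2}{2}\le\frac1{M_f(p,r)}\int_{\delta r\le d_h(p,y)<2r}d_h(p,y)^2\,d\mu_f(y)\le\frac{16r^2}{\varepsilon_{\mathrm{pkt}}}\,\mu_f\bigl(A_{p,r}\bigr),
\]
where $A_{p,r}:=\{\delta r\le d_h(p,y)<2r\}$. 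This gives the pointwise bound
\[
 W_f(p,r)\le\min\Bigl\{1,\;\frac{32}{\varepsilon_{\mathrm{pkt}}\rho_-^2}\,\mu_f(A_{p,r})\Bigr\}\cdot\mathbf 1_{\{M_f(p,r)>\varepsilon_{\mathrm{pkt}}/4\}}.
\]
The prefactor is $32/(\varepsilon_{\mathrm{pkt}}\rho_-^2)=2^{13}E_0/\varepsilon_{\mathrm{pkt}}^2$, which already accounts for two of the three factors $E_0/\varepsilon_{\mathrm{pkt}}$ and most of the power $2^{17}$.

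\emph{Step 2 (spatial counting at a fixed scale).} By Fubini and \eqref{eq:packet-volume-comparison},
\[
 \int_\Sigma M_f(p,r)\,dA_h(p)\le\int_\Sigma\operatorname{Area}_h(B_{2r}(y))\,d\mu_f(y)\le4C_{\mathrm{vol}}r^2E_0 .
\]
By Chebyshev, the set of $p$ with $M_f(p,r)>\varepsilon_{\mathrm{pkt}}/4$ therefore has $h$-area at most $16C_{\mathrm{vol}}r^2E_0/\varepsilon_{\mathrm{pkt}}$. This is where the factor $r^{-2}$ in \eqref{eq:total-packet-weight} is absorbed, and it supplies $C_{\mathrm{vol}}$ and the third factor $E_0/\varepsilon_{\mathrm{pkt}}$.

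\emph{Step 3 (counting scales; the main obstacle).} It remains to control the $dr/r$ integral uniformly in $f$. Steps 1 and 2 separately do not suffice. Integrating the Step~1 bound over $p$ first gives a divergent $\int dr/r$. Using only Step~2 gives a constant per scale, so again a logarithm in $r$. The plan is to combine the two Fubini orders. For fixed $y$, the region $\{(p,r):y\in A_{p,r}\}$ has weighted measure
\[
 \int\!\!\int\mathbf 1_{\{\delta r\le d_h(p,y)<2r\}}\,\frac{dA_h(p)}{r^2}\,\frac{dr}{r}\le C_{\mathrm{vol}}\cdot4\log(2/\delta),
\]
using $\operatorname{Area}(B_{2r})/r^2\le4C_{\mathrm{vol}}$ and $\int\mathbf 1_{\{d/2<r\le d/\delta\}}dr/r=\log(2/\delta)$. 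Integrating the Step~1 bound against $d\mu_f(y)$ would then give
\[
 \mathcal W(f)\lesssim\frac{2^{13}E_0}{\varepsilon_{\mathrm{pkt}}^2}\,C_{\mathrm{vol}}\,E_0\,\log\frac{2}{\delta}.
\]
I expect the delicate point to be converting $\log(2/\delta)=\log(2\sqrt2/\rho_-)$ into the clean polynomial factor. I would first use $\log x\le x$ with $1/\rho_-=16\sqrt{E_0/\varepsilon_{\mathrm{pkt}}}$. If that overshoots the exponent~$3$, I would instead use the $\min\{1,\cdot\}$ together with the Step~2 area bound on the range of $r$ where $\mu_f(A_{p,r})$ is small. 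Finally, I would check that the powers of $2$ collect to at most $2^{17}$, adjusting the splitting parameter $\delta$ if needed.
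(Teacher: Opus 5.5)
\textbf{There is a genuine gap.} It shows up in Step~3, but its source is Step~1.

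\emph{Step~3 is false.} Your bound on the weighted measure of $\{(p,r):y\in A_{p,r}\}$ uses two facts that cannot be combined. The area bound $\operatorname{Area}_h(B_{2r}(y))\le4C_{\mathrm{vol}}r^2$ is what remains after integrating out $p$ at fixed $r$. The window $d/2<r\le d/\delta$, with $d=d_h(p,y)$, is a constraint on $r$ at fixed $p$. Carried out consistently in either order, the integral is $+\infty$:
\begin{itemize}
\item At each fixed $r$, the annulus $\{\delta r\le d_h(p,y)<2r\}$ has area $\ge cr^2$, so the $p$-integral is $\ge c$ at every scale, and $\int_0^{r_*}dr/r=\infty$.
\item Integrating in the other order gives $\int dA_h(p)\int_{d/2}^{d/\delta}r^{-3}\,dr\approx\int2\,d_h(p,y)^{-2}\,dA_h(p)$, which diverges at $p=y$ in dimension two.
\end{itemize}
This is forced by scale invariance: the region is a dilation cone about $y$, and $dA_h\,dr/r^3$ is dilation invariant. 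So there is no finite $\log(2/\delta)$ to convert into a power.

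\emph{The fallback cannot work either, because the Step~1 majorant is not uniformly integrable.} Let $f$ carry mass $m_0\in(\varepsilon_{\mathrm{pkt}}/4,E_0]$ inside $B_{\epsilon_0}(x_0)$. Take any $r\in(8\epsilon_0,r_*)$ and any $p$ with $\delta r+\epsilon_0\le d_h(p,x_0)\le r-\epsilon_0$; this set of $p$ has area $\ge cr^2$. Then:
\begin{itemize}
\item $M_f(p,r)\ge m_0$ and $\mu_f(A_{p,r})\ge m_0$.
\item Since $2^{13}E_0m_0/\varepsilon_{\mathrm{pkt}}^2>2^{11}$, your $\min\{1,\cdot\}$ equals $1$ there.
\item Its $\frac{dA_h}{r^2}\frac{dr}{r}$-integral is therefore $\gtrsim\log(r_*/\epsilon_0)$, unbounded as $\epsilon_0\to0$.
\end{itemize}
The Chebyshev bound of Step~2 only caps each scale by a constant, so it cannot prevent this logarithm. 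By contrast, if the energy outside $B_{\epsilon_0}(x_0)$ is small, the actual packet at these pairs is concentrated near $x_0$. Its variance is then small and $W_f=0$: the packet is merely off-center relative to $p$. Comparing $\mathcal V_{f,p,r}$ with the fixed point $p$ discards exactly this information.

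\emph{The missing idea is to average the comparison point over $\eta_{f,p,r}$ itself.} This is legitimate because $\operatorname{supp}\chi_{p,r}\subset\overline{B_{2r}(p)}$. It gives
\[
 M_f(p,r)^2\sigma_f(p,r)^2\le\iint d_h(y,z)^2\chi_{p,r}(y)\chi_{p,r}(z)\,d\mu_f(y)\,d\mu_f(z).
\]
This bound is quadratic in $\mu_f$, weighted by the pair distance, and vanishes for concentrated packets. The rest follows by Tonelli:
\begin{itemize}
\item With $M_f>\varepsilon_{\mathrm{pkt}}/4$ and $\sigma_f>\rho_-r$, the integrand carries the factor $r^{-5}$.
\item A nonzero product of cutoffs forces $r>d_h(y,z)/4$, and the set of admissible centers $p$ has area at most $4C_{\mathrm{vol}}r^2$.
\item Hence $\int_{d/4}^\infty4C_{\mathrm{vol}}r^{-3}\,dr=32C_{\mathrm{vol}}/d^2$, which cancels the weight $d^2$ exactly.
\end{itemize}
Collecting constants, $\frac{2^{12}E_0}{\varepsilon_{\mathrm{pkt}}^3}\cdot32C_{\mathrm{vol}}E_0^2=2^{17}C_{\mathrm{vol}}(E_0/\varepsilon_{\mathrm{pkt}})^3$.
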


\begin{proof}
By the minimizing property \eqref{eq:packet-barycenter}, averaged over
the comparison point with respect to $\eta_{f,p,r}$,
\[
 M_f(p,r)^2\sigma_f(p,r)^2
 \leq\iint d_h(y,z)^2\chi_{p,r}(y)\chi_{p,r}(z)
                  \,d\mu_f(y)d\mu_f(z).
\]
On the active set $M_f>\frac{\varepsilon_{\mathrm{pkt}}}{4}$ and $\sigma_f>\rho_-r$.  Since $0\leq W_f\leq1$,
Tonelli's theorem gives
\[
 \mathcal W(f)\leq\frac1{{\frac{\varepsilon_{\mathrm{pkt}}}{4}}^2\rho_-^2}
 \iint_{y\ne z}d_h(y,z)^2
 \left[\int_0^{r_*}\int_\Sigma
 \chi_{p,r}(y)\chi_{p,r}(z)\,\frac{dA_h(p)\,dr}{r^5}\right]
 \,d\mu_f(y)d\mu_f(z).
\]
For $d=d_h(y,z)>0$, a nonzero product of cutoffs requires $r>d/4$,
and its centers lie in $B_{2r}(y)$.  By
\eqref{eq:packet-volume-comparison}, the bracket is at most
\[
 4C_{\mathrm{vol}}\int_{d/4}^{\infty}r^{-3}\,dr
 =\frac{32C_{\mathrm{vol}}}{d^2}.
\]
Substitution proves \eqref{eq:total-packet-weight-bound}.
\end{proof}
Let $H_\tau(x,y)$ denote the heat kernel of
$(\Sigma,h)$.   In normal coordinates, its leading small-time behavior is
\begin{equation*}
 H_\tau(x,y)
 \sim\frac1{4\pi\tau}
 \exp\!\left(-\frac{d_h(x,y)^2}{4\tau}\right).
\end{equation*}
Thus, for small $\tau$, $H_\tau(x,\cdot)$ is concentrated near $x$ on
a spatial scale comparable to $\sqrt\tau$.  For a center-scale pair
$(p,r)$ we take $\tau=r^2$ and define
\begin{equation}
 \nu_f(x)
 :=\int_{\Sigma\times\mathbb R_{>0}}
 H_{r^2}\bigl(x,b_f(p,r)\bigr)
 W_f(p,r)\,\frac{dA_h(p)\,dr}{r^3}.
 \label{eq:heat-reconstructed-pw-density}
\end{equation}
Here $W_f(p,r)$ is extended by zero for $r\geq r_*$.  With this
convention, \eqref{eq:heat-reconstructed-pw-density} is the heat-kernel
reconstruction of the parameter measure in
\eqref{eq:total-packet-weight}.

Fix a sufficiently large constant $\kappa>0$ and set
\begin{equation*}
 S_f(x):=\kappa P_h\nu_f(x), \text{ where $P_h$ is the mean-zero projection on $\Sigma$. }
\end{equation*}
We call $S_f$ the Parker--Wolfson source, or simply the PW source,
associated with $f$.

Given $f_0\in H^3(\Sigma,N)$, consider
\begin{equation}
 \left\{
 \begin{aligned}
  &\partial_t f=e^{-2\phi_t}\tau_h(f)=\tau_{g_t}(f),
  \quad g_t=e^{2\phi_t}h,
\\
  &-\Delta_h\phi_t
  =S_{f(t)},
\\
  &\int_\Sigma\phi_t\,dA_h
  =0,
 \end{aligned}
 \right.
 \label{eq:gphf-system}
\end{equation}
together with
\begin{equation*}
 f(0)=f_0.
\end{equation*}
This is a coupled parabolic--elliptic system. The map evolves by harmonic map heat flow with respect to the conformal metric $g_t$, while $\phi_t$ is the normalized Green potential of the instantaneous PW source $S_f$.  We call \eqref{eq:gphf-system} the \emph{Green--potential heat flow} (GPHF) with initial map $f_0$.


\begin{remark}
Let $\Phi(f)$ denote the normalized Green potential of $S_f$. On the smooth mapping space, the
GPHF is the negative gradient flow of $E_h$ for the inner product
\[
\mathsf G_f(X,Y)
:=\int_\Sigma e^{2\Phi(f)}\langle X,Y\rangle dA_h.
\]
If $E_h(f_0)\le\varepsilon_{\mathrm{pkt}}/4$, the PW source and its
normalized Green potential vanish. In this case, the GPHF agrees exactly with the ordinary harmonic map
heat flow.
\end{remark}

\section{Local Well--Posedness and Continuation}
\begin{theorem}
\label{thm:gphf-short-time}
Let $f_0\in H^3(\Sigma,N)$.  There is a number $T>0$ and a unique
solution pair $(f,\phi)$ of the GPHF on $[0,T]$ such that
\begin{equation*}
 f\in C\bigl([0,T];H^3(\Sigma,N)\bigr),
 \qquad
 \iota\circ f\in C^1\bigl([0,T];H^1(\Sigma,\mathbb R^L)\bigr)
\end{equation*}
for a fixed smooth isometric embedding $\iota:N\hookrightarrow\mathbb R^L$,
and
\begin{equation*}
 \phi\in C^1\bigl([0,T];C^\infty(\Sigma)\bigr)
\end{equation*}
in the graded Fr\'echet sense.  For every $0<\tau<T$,
\begin{equation*}
 f,\phi\in C^\infty\bigl(\Sigma\times[\tau,T]\bigr).
\end{equation*}
The equation in \eqref{eq:gphf-system} holds as an identity in
$H^1$ at every time.
Moreover,  one has the following energy identity
\begin{equation}\label{eq:short-time-energy-law}
 E_h(f(t_2))
 +\int_{t_1}^{t_2}\int_\Sigma
       a(f)|\tau_h(f)|^2\,dA_h\,dt
 =E_h(f(t_1))
\end{equation}
for  $a(f):=e^{-2\Phi(f)}$ and $0\leq t_1\leq t_2\leq T$.
\end{theorem}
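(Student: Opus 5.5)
We treat the GPHF as a quasilinear parabolic equation for $u=\iota\circ f$ with a nonlocal, smoothing coefficient, and solve it by a contraction argument. In the embedding, the first equation reads
\[
 \partial_t u=a(u)\bigl(\Delta_h u+A(u)(du,du)\bigr),\qquad a(u)=e^{-2\Phi(u)},
\]
where $A$ is the second fundamental form of $N\subset\mathbb R^L$.

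\emph{Step 1: regularity of the coefficient map.} We first show that $f\mapsto\Phi(f)$ is locally Lipschitz, indeed $C^1$, from $H^1$-bounded subsets of $H^3(\Sigma,N)$ (or even $H^1$ with a mass threshold) into $C^k(\Sigma)$ for every $k$. The weight $W_f(p,r)$ depends on $f$ only through the integrals $M_f(p,r)$ and $\sigma_f(p,r)$ against $e_h(f)\,dA_h$, which are continuous and differentiable in $f\in H^1$. On the support of $\Gamma_1$, Proposition~\ref{prop:packet-barycenter} with $\varepsilon=\varepsilon_{\mathrm{pkt}}/4$ gives smooth dependence of $b_f(p,r)$, with derivative bounds depending only on $c_{\mathrm{cvx}}$ and $\varepsilon_{\mathrm{pkt}}$. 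The heat kernel $H_{r^2}(x,b)$ is smooth in $x$.

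The scale cutoff needs care. When $\Gamma_2\ne0$ we have $\sigma_f\ge\rho_-r$. The Gaussian factor in $H_{r^2}$ then means that small $r$ could make $\nu_f$ large in $C^k$; the $\nu_f$ integrand has size $r^{-2-k}$ in $C^k$. However, $W_f(p,r)\neq0$ forces mass $\varepsilon_{\mathrm{pkt}}/4$ spread over scale $\gtrsim r$, and at fixed $f\in H^3\subset C^1$ this is impossible for $r<r_0(\|df\|_\infty)$. So on a neighbourhood of $f_0$ in $H^3$ the parameter region is $r\ge r_0>0$, and $\nu_f$ is uniformly smooth.

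Lemma~\ref{lem:total-packet-weight} bounds the parameter measure. Elliptic theory on $\Sigma$, applied to $-\Delta_h\Phi=\kappa P_h\nu_f$, then gives $\Phi(f)\in C^\infty$ with Lipschitz dependence on $f$.

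\emph{Step 2: fixed point.} Next we set up the fixed point on
\[
 X_T=\{u\in C([0,T];H^3)\cap L^2(0,T;H^4):\ \|u-u_0\|\le R\}.
\]
Given $v\in X_T$, we freeze the coefficient $a(v(t))$, which is smooth in $x$ and Lipschitz in $t$ with values in $C^k$. We then solve the linear problem
\[
 \partial_t u-a(v)\Delta_h u=a(v)A(v)(dv,dv)
\]
by standard energy estimates at the $H^3$ level, using $H^3\hookrightarrow C^1$ in two dimensions to handle the nonlinearity. For $T$ small the resulting map $v\mapsto u$ is a contraction in a lower norm, say $C([0,T];H^1)$, and it maps $X_T$ into itself.

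Preservation of the constraint $u\in N$ follows in the usual way. Let $\pi$ be the nearest-point projection. Then $w=u-\pi(u)$ satisfies a linear parabolic equation with zero initial data, so $w\equiv0$.

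\emph{Step 3: uniqueness, time regularity, and smoothing.} Uniqueness follows from the same $H^1$ difference estimate together with the Lipschitz bound of Step 1.

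For the time regularity of $\phi$: $t\mapsto f(t)$ is $C^1$ into $H^1$, and $\Phi$ is $C^1$ from $H^1$ into $C^k$ near the solution. Hence $\phi\in C^1([0,T];C^\infty)$.

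For interior smoothing, the coefficient is smooth in $x$. We bootstrap in $t$ by differentiating the equation, using the parabolic smoothing of $\partial_t-a\Delta_h$, and the fact that $\partial_t^j\phi$ is controlled by $\partial_t^j f$ in $H^1$.

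\emph{Step 4: energy identity.} We obtain the energy identity by differentiating $E_h(f(t))=\frac12\int|du|^2$. This gives
\[
 -\int\langle\tau_h(f),\partial_tf\rangle=-\int a|\tau_h|^2,
\]
which is justified since $\partial_t u\in H^1$ and $u\in H^3$. Integrating in time yields \eqref{eq:short-time-energy-law}.

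\emph{Main obstacle.} The main obstacle is Step 1. We must obtain genuine differentiability of $f\mapsto\Phi(f)$, uniformly over the scale integral. The delicate points are the lower cutoff in $r$ and the derivatives of $b_f$ and $\sigma_f$ through the normalizing mass $M_f$. The plan is to handle these by restricting to the mass-active region $\Gamma_1\neq0$ and using the $H^3$ gradient bound to exclude small scales.
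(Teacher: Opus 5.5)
Your proposal is correct. Its key input is the same as the paper's: a uniform $C^1$ bound on an $H^3$-neighbourhood of $u_0$ forces $W=0$ below a fixed scale $r_\delta$. Hence $u\mapsto a(u)=e^{-2\Phi(u)}$ is infinitely smoothing, with derivatives into every $H^m$ bounded by $H^1$ norms of the increments. The two arguments differ in the PDE machinery built on that input.

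\textbf{The paper's route.} It realizes the input as an extended source on an $H^1$-open ball $\mathcal V_\rho^1$ of $\mathbb R^L$-valued maps, with scales restricted to $[r_\delta,r_*]$ (Proposition~\ref{prop:infinitely-smoothing-coefficient}). It then adapts Taylor's Friedrichs-mollifier scheme:
\begin{enumerate}
\item Banach-space ODEs for the regularized problem;
\item uniform $H^3$ bounds with an $L^2H^4$ gain, followed by compactness;
\item a separate $L^2$ difference estimate for uniqueness;
\item a maximum principle for $q(u)=\frac12|u-\pi(u)|^2$, which is a subsolution because the extension $\widetilde A_\iota=-\zeta D^2\pi$ was chosen for exactly that purpose.
\end{enumerate}

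\textbf{Your route.} You freeze both the coefficient and the quadratic term, solve a linear problem at the $H^3$ level, and contract in $C([0,T];H^1)$ inside an $L^\infty H^3\cap L^2H^4$ ball. Constraint preservation comes from a linear inequality $|(\partial_t-a\Delta_h)w|\le C(|w|+|dw|)$ for $w=u-\pi(u)$, together with Gronwall, rather than from a maximum principle.

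\textbf{Trade-offs.} Your route gives existence, uniqueness and Lipschitz dependence in one stroke, and uses directly that the nonlocal coefficient loses no derivatives. The paper's route avoids linear theory with merely time-continuous coefficients. It also produces the higher-order energy inequalities that are reused in the positive-time smoothing bootstrap.

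\textbf{Points to make explicit when you write it out.}
\begin{enumerate}
\item The iterates are $\mathbb R^L$-valued, so the packet construction and $A$ must be extended off $N$, as the paper does with $\mathcal V_\rho^1$ and $\widetilde A_\iota$.
\item Define $X_T$ by a bound on $\|u\|_{L^\infty H^3}+\|u\|_{L^2H^4}$, not by a distance to $u_0$ in $L^2H^4$. That distance is infinite when $u_0\notin H^4$.
\item The frozen coefficient $a(v(t))$ is only continuous in $t$ unless you also bound $\partial_t v$ in $H^1$. This is harmless, since the $H^3$ energy estimate needs no time derivative of $a$.
\end{enumerate}
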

Without loss of generality, fix a smooth isometric embedding 
\begin{equation*}
 \iota:(N,g_N)\hookrightarrow\mathbb R^L
\end{equation*}
and put $u_0:=\iota\circ f_0$.  We use the negative of the usual
second fundamental form,
\begin{equation*}
 A_\iota(X,Y):=-\operatorname{II}_\iota(X,Y).
\end{equation*}
The Gauss formula then gives
\begin{equation}
 d\iota\bigl(\tau_h(f)\bigr)
 =\Delta_hu+A_\iota(u)(du,du),
 \qquad u=\iota\circ f.
 \label{eq:extrinsic-tension-field}
\end{equation}

Let $\mathcal U$ be a tubular neighborhood of $\iota(N)$ and let
$\pi:\mathcal U\to\iota(N)$ be the nearest-point projection.  Choose
$\zeta\in C_c^\infty(\mathcal U)$ which equals one on a smaller tubular
neighborhood $\mathcal U_0\Subset\mathcal U$.  Since
\begin{equation*}
 D^2\pi_q(X,Y)=\operatorname{II}_\iota(X,Y),
 \qquad q\in\iota(N),\quad X,Y\in T_q\iota(N),
\end{equation*}
the formula
\begin{equation*}
 \widetilde A_\iota(z)(X,Y)
 :=-\zeta(z)D^2\pi_z(X,Y),
\end{equation*}
extended by zero outside $\mathcal U$, defines a smooth compactly
supported map
\[
 \widetilde A_\iota:
 \mathbb R^L\longrightarrow
 \operatorname{Bil}(\mathbb R^L\times\mathbb R^L;\mathbb R^L).
\]

Denote $
 \mathcal V_\delta^3
 :=\left\{u\in H^3(\Sigma,\mathbb R^L):
          \|u-u_0\|_{H^3}<\delta\right\}.
$
Since $H^3(\Sigma)\hookrightarrow C^{1,\alpha}(\Sigma)$, we may choose a small $\delta>0$ so that
\begin{equation*}
 u(\Sigma)\subset\mathcal U_0
 \quad\text{for every }u\in\mathcal V_\delta^3,
 \qquad
 C_\delta:=\sup_{u\in\mathcal V_\delta^3}
                  \|du\|_{L^\infty}<\infty.
\end{equation*}
Hence,  uniformly in $p\in\Sigma$,
\begin{equation}\label{eq:uniform-small-scale-packet-mass}
 M_u(p,r)
 =\int_\Sigma\chi_{p,r}\,d\mu_u
 \leq \frac{C_\delta^2}{2}
       \operatorname{Area}_h\bigl(B_{2r}(p)\bigr)
 \leq 2C_{\mathrm{vol}} C_\delta^2r^2.
\end{equation}
Choose $r_\delta\in(0,r_*)$ so that
$
 2C_{\mathrm{vol}} C_\delta^2r_\delta^2
 \leq\frac{\varepsilon_{\mathrm{pkt}}}{4}.
$
The mass cutoff from Section~2 then implies
$ W_u(p,r)=0\text{ for }u\in\mathcal V_\delta^3,
 \ p\in\Sigma,
 \ 0<r\leq r_\delta.
$

Choose a radius
$R>\|u_0\|_{H^3}+1$ and $\rho>0$ large enough that the $H^1$-open set
\begin{equation*}
 \mathcal V_\rho^1
 :=\left\{v\in H^1(\Sigma,\mathbb R^L):
          \|v-u_0\|_{H^1}<\rho\right\}
\end{equation*}
contains both $\mathcal V_\delta^3$ and the $H^3$-ball
$\{v:\|v\|_{H^3}<R\}$. 

Extend the packet construction to
$v\in\mathcal V_\rho^1$ by replacing $\mu_f$ with $\mu_v$ and
restricting the scale integration to $[r_\delta,r_*]$.
We denote the extended quantities by a tilde.
And clearly,
\begin{equation}
\widetilde S_\delta(u)=S_f
\qquad\text{whenever }u=\iota\circ f\in\mathcal V_\delta^3.
\label{eq:extended-source-agrees}
\end{equation}
\begin{proposition}
\label{prop:infinitely-smoothing-coefficient}
For every finite $\rho$, the map
\begin{equation*}
 \widetilde S_\delta:
 \mathcal V_\rho^1\longrightarrow C_0^\infty(\Sigma)
\end{equation*}
is $C^\infty$ in the graded Fr\'echet sense.  More precisely, for every
$m,\ell\geq0$,
\begin{equation}
 \left\|
 D^\ell\widetilde S_\delta(v)
 [z_1,\ldots,z_\ell]
 \right\|_{H^m}
 \leq C_{m,\ell}
       \prod_{j=1}^\ell\|z_j\|_{H^1}
 \label{eq:source-all-derivative-estimate}
\end{equation}
on every smaller $H^1$-ball whose closure is contained in
$\mathcal V_\rho^1$.  For $\ell=0$, the product is interpreted as $1$.
\end{proposition}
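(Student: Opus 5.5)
The plan is to write $\widetilde S_\delta$ as a composition of three maps. First, a multilinear quadratic map $v\mapsto\mu_v$ into smooth functions of the parameters. Second, a smooth nonlinear operation on finitely many real parameters. Third, integration against the smooth heat kernel on the compact set $\Sigma\times[r_\delta,r_*]$, followed by $P_h$. The truncation $r\geq r_\delta$ is essential: it keeps the heat-kernel scale bounded below, so $x\mapsto H_{r^2}(x,b)$ is $C^\infty$ in $(x,b,r)$ with every $x$-derivative bounded uniformly. All $H^m$ norms then come for free, and the only place where $v$ enters is through integrals of $dv\otimes dv$ against smooth test functions.

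\textbf{Step 1 (packet moments).} For $v\in\mathcal V_\rho^1$ and $(p,r)\in\Sigma\times[r_\delta,r_*]$, the quantities $M_v(p,r)=\int\chi_{p,r}\,\tfrac12|dv|^2\,dA_h$ and, for each $x$,
\[
 \int\chi_{p,r}(y)\,\Psi(x,y)\,d\mu_v(y),\qquad \Psi\in\{d_h(x,y)^2,\ \exp_x^{-1}(y)\},
\]
are continuous quadratic forms in $v\in H^1$. Their coefficients are smooth in $(p,r,x)$, since $\chi_{p,r}$ is smooth in $p$ for $r\geq r_\delta$ and the ball is strongly convex. The distance $d_h(y,p)$ fails to be smooth only at $y=p$, but $\chi\equiv1$ near $0$ removes that singularity. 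Consequently $\ell$-th derivatives in $v$ vanish for $\ell\geq3$. The first and second derivatives are bounded by $\|z_1\|_{H^1}\|v\|_{H^1}$ and $\|z_1\|_{H^1}\|z_2\|_{H^1}$, uniformly in $C^k$ of the parameters.

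\textbf{Step 2 (barycenter and weight).} On the support of $\Gamma_1(M_v)$ we have $M_v>\varepsilon_{\mathrm{pkt}}/4$. There the proof of Proposition~\ref{prop:packet-barycenter} applies verbatim to $\mu_v$ for $v\in H^1$; it only used that $\eta$ is a probability measure supported in $\overline{B_{2r}(p)}$. This gives a unique interior zero of $\mathcal B(v,p,r;x)=M_v^{-1}\int\chi_{p,r}\exp_x^{-1}(y)\,d\mu_v$, and $D_x\mathcal B$ is invertible with bound depending on $c_{\mathrm{cvx}}$. The implicit function theorem on the Banach space $H^1\times\Sigma\times[r_\delta,r_*]$ then gives smoothness of $\widetilde b_v(p,r)$. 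Implicit differentiation expresses $D^\ell_v \widetilde b$ through derivatives of $\mathcal B$, which by Step~1 are multilinear in the $z_j$ with $H^1$ bounds. The same holds for $\sigma_v^2=\mathcal V(\widetilde b)$, for $\rho_v$, and finally for $\widetilde W_v=\Gamma_1(M_v)\Gamma_2(\rho_v)$. The zero-mass convention makes the product smooth across $\{M_v\leq\varepsilon_{\mathrm{pkt}}/4\}$, because $\Gamma_1$ and all its derivatives vanish there.

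\textbf{Step 3 (reconstruction).} We have $\widetilde\nu_v(x)=\int_{r_\delta}^{r_*}\int_\Sigma H_{r^2}(x,\widetilde b_v(p,r))\,\widetilde W_v(p,r)\,r^{-3}\,dA_h(p)\,dr$. Differentiating $\ell$ times in $v$ and $k$ times in $x$ under the integral produces derivatives of $H$ in both variables, bounded on $r\geq r_\delta$. These are multiplied by products of derivatives of $\widetilde b$ and $\widetilde W$, which are bounded by $\prod\|z_j\|_{H^1}$ uniformly in $(p,r)$. Integrating over the compact parameter set gives \eqref{eq:source-all-derivative-estimate} in $C^m$, hence in $H^m$, and $P_h$ is bounded on $H^m$. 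Continuity of the derivatives in $v$ follows the same way, so Fr\'echet smoothness holds in every $H^m$, hence into $C_0^\infty$.

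\textbf{Main obstacle.} The main difficulty is making the uniformity in Step~2 precise. The constants must be uniform over the closed smaller ball and over all $(p,r)$, including near the boundary of $\{M_v>\varepsilon_{\mathrm{pkt}}/4\}$. I would handle this by noting that $1/M_v\leq4/\varepsilon_{\mathrm{pkt}}$ on the support of $\Gamma_1$, that $\|v\|_{H^1}$ is bounded on the ball, and that the inverse of $D_x\mathcal B$ is controlled solely by $c_{\mathrm{cvx}}$. With these bounds, the Fa\`a di Bruno expansion of the implicit derivatives has uniformly bounded coefficients.
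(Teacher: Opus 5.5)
Your proposal is correct and follows the paper's proof essentially step for step. The mass is quadratic in $dv$; the barycenter comes from the implicit-function theorem, with the convexity bound made uniform by the mass cutoff $M_v>\varepsilon_{\mathrm{pkt}}/4$ (your normalized barycenter field is equivalent to the paper's unnormalized one there); the flat cutoff $\Gamma_1$ makes the zero extension smooth; and $r\ge r_\delta$ justifies differentiating the heat kernel under the integral.

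One small correction: the uniform multilinear bounds hold for $\Gamma_2(\rho_v)$, not for $\rho_v$ itself, because $\sigma_v$ can be arbitrarily small on an $H^1$-ball. The paper handles this by writing that factor as $\widehat\Gamma_2(\sigma_v^2/r^2)$ with $\widehat\Gamma_2(q)=\Gamma_2(\sqrt q)$, which is smooth because $\Gamma_2$ vanishes on $[0,\rho_-]$.
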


\begin{proof}
For fixed $(p,r)$, the mass depends polynomially on $dv$.  In
particular,
\begin{align*}
 D\widetilde M_v(p,r)[z]
 &=\int_\Sigma\chi_{p,r}\langle dv,dz\rangle\,dA_h,
 \\
 D^2\widetilde M_v(p,r)[z_1,z_2]
 &=\int_\Sigma\chi_{p,r}
            \langle dz_1,dz_2\rangle\,dA_h,
\end{align*}
and all higher derivatives vanish.  Hence
$v\mapsto\widetilde M_v(p,r)$ is smooth on $H^1$.

It remains to justify the normalized center and scale.  On the set
where the mass cutoff can be nonzero, define the unnormalized
barycenter vector field
\begin{equation*}
 \mathcal B(v,p,r;x)
 :=\int_\Sigma
   \exp_x^{-1}(y)\chi_{p,r}(y)\,d\mu_v(y).
\end{equation*}
The point $\widetilde b_v(p,r)$ is the unique zero of $\mathcal B$.
If
\begin{equation*}
 \mathcal E_{v,p,r}(x)
 :=\frac12\int_\Sigma
   d_h(x,y)^2\chi_{p,r}(y)\,d\mu_v(y),
\end{equation*}
then $\nabla_x\mathcal E_{v,p,r}=-\mathcal B$.  The convexity estimate
from Section~2 gives, for $\xi\in T_x\Sigma$,
\begin{equation}
	\bigl\langle-D_x\mathcal B[\xi],\xi\bigr\rangle_h
 =\operatorname{Hess}_x\mathcal E_{v,p,r}(\xi,\xi)
 \geq c_{\mathrm{cvx}}\widetilde M_v(p,r)|\xi|_h^2.
 \label{eq:barycenter-uniform-linearization}
\end{equation}

On the active set,
$\widetilde M_v(p,r)>\varepsilon_{\mathrm{pkt}}/4$; hence
$D_x\mathcal B$ is uniformly invertible there.  The Banach-space
implicit-function theorem shows that
\begin{equation*}
 v\longmapsto\widetilde b_v(p,r)
\end{equation*}
is smooth, with all derivatives bounded multilinearly by the $H^1$
norms of their arguments, locally uniformly in
$(p,r)\in\Sigma\times[r_\delta,r_*]$.

The mass cutoff bounds the denominator away from zero. To handle the square root, set 
\begin{equation*}
 q_v(p,r):=\frac{\widetilde\sigma_v(p,r)^2}{r^2},
 \qquad
 \widehat\Gamma_2(q):=\Gamma_2(\sqrt q).
\end{equation*}
Because $\Gamma_2$ vanishes identically on $[0,\rho_-]$,
$\widehat\Gamma_2$ extends as a smooth function at $q=0$.  Thus the
variance factor is the smooth composite
$\widehat\Gamma_2(q_v)$.

 Put
$m_*:=\varepsilon_{\mathrm{pkt}}/4$.  On
$\{\widetilde M_v(p,r)>m_*\}$, define, for $m\geq0$,
\begin{equation}
 \mathcal I_m(v,p,r;x)
 :=\Gamma_1(\widetilde M_v(p,r))
   \widehat\Gamma_2(q_v(p,r))
   \nabla_x^mH_{r^2}
      \bigl(x,\widetilde b_v(p,r)\bigr),
 \label{eq:weighted-heat-kernel-integrand}
\end{equation}
and set $\mathcal I_m=0$ when
$\widetilde M_v(p,r)\leq m_*$.  Since $\Gamma_1$ is smooth and
identically zero on $(-\infty,m_*]$, for every $j,N\geq0$,
\begin{equation}
 |\Gamma_1^{(j)}(s)|
 \leq C_{j,N}(s-m_*)_+^N
 \quad\text{for }s\text{ near }m_*.
 \label{eq:mass-cutoff-flatness}
\end{equation}
On the other hand, repeated implicit differentiation of the barycenter
equation produces only powers of
$(D_x\mathcal B)^{-1}$ and derivatives of $\mathcal B$.  The former
are uniformly bounded by \eqref{eq:barycenter-uniform-linearization}
because $\widetilde M_v\geq m_*$, and the latter are bounded on every
bounded $H^1$ set.  The same is true for all derivatives of $q_v$.
Consequently, every $v$-derivative of
\eqref{eq:weighted-heat-kernel-integrand} is a finite sum of bounded
factors multiplied by some derivative of $\Gamma_1$; by
\eqref{eq:mass-cutoff-flatness} it tends uniformly to zero as
$\widetilde M_v\downarrow m_*$.  Thus the zero extension of
$\mathcal I_m$ is $C^\infty$.  

Since $r\ge r_\delta>0$, uniform bounds for the heat kernel and
its derivatives, together with the preceding estimates, justify
differentiation under the integral to every order, proving the
asserted smoothness and \eqref{eq:source-all-derivative-estimate}.
\end{proof}
\begin{remark} Elliptic regularity and smooth composition imply that, for every
$m\ge0$,
\[
\widetilde\Phi_\delta:\mathcal V_\rho^1\to H_0^{m+2}(\Sigma),
\qquad
\widetilde a_\delta:\mathcal V_\rho^1\to H^m(\Sigma)
\]
are $C^\infty$, with derivative estimates analogous to
\eqref{eq:source-all-derivative-estimate} on the same smaller
$H^1$-balls.

Moreover, $0\le\widetilde W_v\le1$ and $r\ge r_\delta>0$ give a
uniform $L^2$ bound for $\widetilde S_\delta(v)$.
The elliptic estimate and $H^2(\Sigma)\hookrightarrow C^0(\Sigma)$
therefore yield
$|\widetilde\Phi_\delta(v)|\infty\le C\Phi$,
uniformly on $\mathcal V_\rho^1$. Consequently,
\begin{equation}\label{eq:uniform-ellipticity-extended-coefficient}
\lambda\le\widetilde a_\delta(v)(x)\le\Lambda,
\qquad
v\in\mathcal V_\rho^1,\quad x\in\Sigma,
\end{equation}
where $\lambda=e^{-2C_\Phi}$ and $\Lambda=e^{2C_\Phi}$.
\end{remark}

\subsection{Proof of the short-time well-posedness theorem}
\label{subsec:taylor-nonlocal-coefficient}

In local coordinates the extended ambient equation is
\begin{equation*}
 \partial_tu
 =\widetilde a_\delta(u)
  \left\{
  h^{jk}\partial_j\partial_ku+B(x,u,du)
  \right\},
\end{equation*}
where
\begin{equation*}
 B(x,u,du)
 =-h^{ij}\Gamma_{ij}^k\partial_ku
  +\widetilde A_\iota(u)(du,du).
\end{equation*}
By \eqref{eq:uniform-ellipticity-extended-coefficient}, its principal
symbol is a positive scalar multiple of $|\xi|_h^2I_{\mathbb R^L}$.

Taylor's Sobolev theorem for quasilinear parabolic systems treats
coefficients depending pointwise on a finite spatial jet of $u$; see
\cite[Chapter~15, Sections~7--8, especially Proposition~8.2]
{TaylorPDEIII}.  Our coefficient
$u\mapsto\widetilde a_\delta(u)$ is not of this literal form, because
it depends on the map on all of $\Sigma$.  Proposition
\ref{prop:infinitely-smoothing-coefficient}, however, gives stronger
estimates than the local Moser estimates used in that construction: an
$H^1$ change of the input produces an $H^m$ change of the coefficient
for every $m$.  The next lemma records the precise variant needed here.

\begin{lemma}
\label{lem:taylor-smoothing-nonlocal}
Let $U\subset H^1(\Sigma,\mathbb R^L)$ be open, and suppose
\begin{equation*}
 a:U\longrightarrow C^\infty(\Sigma)
\end{equation*}
is a positive scalar map satisfying, on bounded subsets of $U$,
\begin{align}
 \lambda&\leq a(v)(x)\leq\Lambda,
 \notag\\
 \left\|D^\ell a(v)[z_1,\ldots,z_\ell]\right\|_{H^m}
 &\leq C_{m,\ell}
       \prod_{j=1}^\ell\|z_j\|_{H^1}
 \qquad(m,\ell\geq0).
 \label{eq:abstract-coefficient-tame-estimate}
\end{align}
Let $A:\mathbb R^L\to
\operatorname{Bil}(\mathbb R^L\times\mathbb R^L;\mathbb R^L)$ be
smooth with bounded derivatives.  If $u_0\in H^3$ and an $H^3$-ball
containing $u_0$ has closure contained in $U$, then
\begin{equation*}
 \partial_tu=a(u)\{\Delta_hu+A(u)(du,du)\},
 \qquad u(0)=u_0,
\end{equation*}
has a unique solution on some nonmaximal interval $[0,T]$ with
\begin{equation*}
 u\in C([0,T];H^3)\cap C^1([0,T];H^1).
\end{equation*}
It is smooth for positive time.
\end{lemma}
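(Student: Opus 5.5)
The strategy is to freeze the coefficient, solve a linear problem, and close a contraction in a low norm with bounds in a high norm, in the style of Kato's quasilinear theory. The coefficient $a(v)$ is infinitely smoothing in $v$ and Lipschitz from $H^1$ into every $H^m$, so no Moser or commutator estimates in $u$ are needed for it. Fix $R_0$ with the closed $H^3$-ball $\overline{B^3_{R_0}(u_0)}\subset U$. For $T>0$ let $X_T$ be the set of $v\in C([0,T];H^3)$ with $v(0)=u_0$, $\sup_t\|v-u_0\|_{H^3}\le R_0$ and $\|\partial_tv\|_{L^\infty H^1}\le K$, where $K$ is fixed by the a priori bound below. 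For $v\in X_T$ put $\alpha(t,x):=a(v(t))(x)$. By \eqref{eq:abstract-coefficient-tame-estimate} and the chain rule, $\alpha\in C([0,T];H^m)$ for all $m$, with $\lambda\le\alpha\le\Lambda$. Moreover $\|\partial_t\alpha\|_{H^m}\le C_{m,1}\|\partial_tv\|_{H^1}\le C_{m,1}K$. Let $\Psi(v)$ be the solution $u$ of the problem that is linear in its top-order part,
\[
\partial_tu=\alpha\,\Delta_hu+\alpha\,A(v)(dv,du),\qquad u(0)=u_0 .
\]
Placing one factor $du$ in the nonlinearity keeps the equation linear. Alternatively one can take $A(v)(dv,dv)$ as a forcing term in $H^2$, since $H^3\hookrightarrow W^{1,\infty}$ in two dimensions, which is simpler. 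Existence for this linear nondivergence-form parabolic equation, with smooth positive time-dependent scalar coefficient, follows from Galerkin approximation or standard linear theory.

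\textbf{Energy estimates.} Apply $\Delta_h$ (or $\nabla^3$) to the equation and pair with $\alpha^{-1}\nabla^3u$, or use the weighted inner product $\int\alpha^{-1}\langle\cdot,\cdot\rangle\,dA_h$ so that the leading term becomes $-\|\nabla^4u\|^2$ up to lower order. This gives
\[
\frac{d}{dt}\|u\|_{H^3}^2+\lambda\|u\|_{H^4}^2\le C(R_0,K)\bigl(1+\|u\|_{H^3}^2\bigr).
\]
All commutators with $\alpha$ cost only $\|\alpha\|_{C^3}$ and $\|\partial_t\alpha\|_{L^\infty}$, which are controlled by the previous paragraph. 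The forcing $A(v)(dv,dv)$ lies in $H^2$ with norm $C(R_0)$. Gronwall then gives $\|u(t)-u_0\|_{H^3}\le R_0$ for $T$ small. Here continuity $u\in C([0,T];H^3)$ comes from the standard approximation argument, e.g.\ mollified data. Reading $\partial_tu$ off the equation gives $\|\partial_tu\|_{H^1}\le C(R_0)=:K$. Hence $\Psi(X_T)\subset X_T$.

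\textbf{Contraction and uniqueness.} For $v_1,v_2\in X_T$, the difference $w=\Psi(v_1)-\Psi(v_2)$ satisfies
\[
\partial_tw=\alpha_1\Delta_hw+(\alpha_1-\alpha_2)\Delta_h\Psi(v_2)+\text{(terms in }v_1-v_2\text{)}.
\]
The key input is $\|\alpha_1-\alpha_2\|_{C^1}\le C\|v_1-v_2\|_{H^1}$, again from \eqref{eq:abstract-coefficient-tame-estimate} with $\ell=1$. An $H^1$ energy estimate then gives
\[
\sup_t\|w\|_{H^1}\le CT^{1/2}\sup_t\|v_1-v_2\|_{H^1}.
\]
Since $X_T$ is closed under $H^1$ convergence, because $H^3$ bounds pass to weak limits, $\Psi$ has a unique fixed point $u$. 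Weak-$*$ limits give $u\in L^\infty H^3$. Strong continuity in $H^3$ follows from the energy equality together with weak continuity, the standard Bona--Smith/Kato argument. Then $u\in C^1([0,T];H^1)$ follows from the equation. The same $H^1$ difference estimate applied to two solutions gives uniqueness. The interval is nonmaximal because $T$ depends only on $R_0$ and the constants. Consequently the solution can be continued from $u(T)$ as long as the solution stays in a ball inside $U$.

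\textbf{Smoothing.} For positive time I would bootstrap with time-weighted estimates. Multiplying the $H^k$ energy inequality by $t^{k-3}$ gives $t^{(k-3)/2}\|u\|_{H^k}$ bounded for every $k$. This uses that $\alpha$ is $C^\infty$ in $x$ with $t$-derivatives controlled by $\partial_tu$ in $H^1$. Time regularity follows by differentiating the equation in $t$, which is again harmless because $D^\ell a$ maps $H^1$ into $C^\infty$.

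\textbf{Main obstacle.} The main obstacle is the top-order $H^3$ energy estimate combined with strong time continuity in $H^3$. The loss of derivatives from the quasilinear term $\alpha\Delta u$ must be absorbed using only the positive lower bound $\lambda$ and the time-regularity of $\alpha$. I would handle this with the weighted inner product $\int\alpha^{-1}\langle\nabla^3u,\nabla^3u\rangle$, so that the principal part becomes symmetric up to commutators involving derivatives of $\alpha$ only.
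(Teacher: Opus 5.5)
Your proposal is a viable route, and it differs genuinely from the paper's. The paper never freezes the coefficient. It mollifies the whole nonlinear equation, $\partial_tu_\eta=J_\eta F(J_\eta u_\eta)$ with Friedrichs operators, and solves this as a Banach-space ODE. It then proves the $H^3$ estimate with $H^4$ dissipation uniformly in $\eta$ and passes to the limit by Rellich and Arzel\`a--Ascoli compactness. Continuity $u\in C([0,T];H^3)$ comes from $u\in L^2H^4$ and $\partial_tu\in L^2H^2$, and uniqueness is proved separately by an $L^2$ difference estimate. Positive-time smoothness comes from restarting at times with $u(t)\in H^k$ and invoking uniqueness.

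Your Kato-type iteration with a contraction in $C([0,T];H^1)$ gives existence, uniqueness and Lipschitz dependence at once. It closes for the same reason as the paper's uniqueness step: the $\ell=1$ coefficient estimate controls $a(v_1)-a(v_2)$ in $C^1$ by $\|v_1-v_2\|_{H^1}$. The forcing difference is controlled by $\|v_1-v_2\|_{H^1}$ only in $L^2$; you handle it by pairing with $(I-\Delta_h)w$ and absorbing $\|w\|_{H^2}$ into the dissipation. Your route costs linear theory for the frozen problem, which is Galerkin and hence itself a regularization. The paper instead pays with compactness and a separate uniqueness proof.

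Two steps need repair. First, $X_T$ is defined by $\sup_t\|v-u_0\|_{H^3}\le R_0$, but Gronwall applied to your inequality bounds $\|u(t)\|_{H^3}$, not $\|u(t)-u_0\|_{H^3}$. You also cannot run the estimate on $u-u_0$, since the extra forcing $\alpha\Delta_hu_0$ lies only in $H^1$, whereas the $H^3$ estimate needs $H^2$. Center the ball at a smoother point instead, as the paper implicitly does with $\{\|v\|_{H^3}<R\}$ about the origin. Take $c\in H^4$ and $\rho>0$ with $\|u_0-c\|_{H^3}<\rho$ and $\overline{B^3_\rho(c)}\subset U$. Then $u-c$ has forcing $\alpha\Delta_hc+\alpha A(v)(dv,dv)\in H^2$, and Gronwall keeps $\|u(t)-c\|_{H^3}<\rho$ for small $T$, uniformly on $X_T$.

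Second, your $X_T$, as a subset of $C([0,T];H^3)$, is not closed in $C([0,T];H^1)$, because limits are only weakly continuous in $H^3$. Define $X_T$ with $L^\infty(0,T;H^3)$ together with the Lipschitz bound in $H^1$. The fixed point satisfies $u=\Psi(u)$, and each $\Psi(v)$ lies in $L^2H^4$ with $\partial_t\Psi(v)\in L^2H^2$. So $u\in C([0,T];H^3)$ follows from the Lions--Magenes lemma, as in the paper, and no Bona--Smith argument is needed. The derivative loss you name as the main obstacle also does not arise here, since the $H^4$ dissipation absorbs every commutator and forcing term. Your time-weighted bootstrap can replace the paper's restart argument, provided the higher-order computations are justified on approximants.
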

\begin{proof}
\noindent\emph{Step 1: Friedrichs regularization.}
Write $\Lambda_h=(I-\Delta_h)^{1/2}$.
Choose $\psi\in C_c^\infty(\mathbb R)$ with $0\le\psi\le1$ and
$\psi=1$ near the origin, and set
$J_\eta=\psi(\eta\Lambda_h)$ for $0<\eta\le1$.
Then $J_\eta$ is self-adjoint, commutes with $\Lambda_h$ and
$\Delta_h$, and is contractive on every $H^s$. Moreover,
\[
 \|J_\eta v\|_{H^{s+k}}\le C_k\eta^{-k}\|v\|_{H^s},
 \qquad
 \|(I-J_\eta)v\|_{H^{s-1}}\le C\eta\|v\|_{H^s},
\]
and $J_\eta v\to v$ in $H^s$ for each $v\in H^s$.
Write
\[
 F(v)=a(v)\{\Delta_hv+A(v)(dv,dv)\},
 \qquad Q(v)=a(v)A(v)(dv,dv),
\]
and consider
\begin{equation}
 \partial_tu_\eta=J_\eta F(J_\eta u_\eta),
 \qquad u_\eta(0)=u_0.
 \label{eq:regularized-equation}
\end{equation}
By \eqref{eq:abstract-coefficient-tame-estimate} and Sobolev
multiplication, $F:H^3\cap U\to H^1$ is locally Lipschitz.
Since $J_\eta$ preserves the $H^3$-ball of radius $R$, the
regularized vector field is locally Lipschitz there for each fixed
$\eta$. The Banach-space ODE theorem gives a local solution $u_\eta$.

\medskip
\noindent\emph{Step 2: uniform estimates.}
Put $w_\eta=J_\eta u_\eta$. Self-adjointness and commutation give
\[
 \frac12\frac d{dt}\|u_\eta\|_{H^3}^2
 =\langle\Lambda_h^3F(w_\eta),\Lambda_h^3w_\eta\rangle.
\]
As long as $\|u_\eta\|_{H^3}<R$, contractivity gives
$\|w_\eta\|_{H^3}<R$, so both maps lie in $U$.
The coefficient bounds and commutator estimates yield
\[
 \|[\Lambda_h^3,a(w_\eta)]\Delta_hw_\eta\|_{L^2}
 \le C_R\|w_\eta\|_{H^4},
 \qquad \|Q(w_\eta)\|_{H^2}\le C_R.
\]
Integration by parts and uniform ellipticity give, for some $c>0$,
\[
 \langle a(w_\eta)\Delta_h\Lambda_h^3w_\eta,
                 \Lambda_h^3w_\eta\rangle
 \le -c\|w_\eta\|_{H^4}^2+C_R\|w_\eta\|_{H^3}^2.
\]
Using
\[
 \langle\Lambda_h^3Q(w_\eta),\Lambda_h^3w_\eta\rangle
 =\langle\Lambda_h^2Q(w_\eta),\Lambda_h^4w_\eta\rangle
\]
and absorbing the commutator and quadratic terms by Young's inequality,
we obtain
\begin{equation}
 \frac d{dt}\|u_\eta\|_{H^3}^2
 +c_R\|J_\eta u_\eta\|_{H^4}^2
 \le C_R(1+\|u_\eta\|_{H^3}^2).
 \label{eq:uniform-h3-estimate}
\end{equation}
The equation also gives $\|\partial_tu_\eta\|_{H^1}\le C_R$.

Choose $T>0$ such that
\[
 (\|u_0\|_{H^3}^2+1)e^{C_RT}-1<R^2.
\]
Gronwall's inequality excludes exit from the $H^3$-ball before $T$.
Contractivity keeps $J_\eta u_\eta$ in the same ball, so the
coefficient remains defined. ODE continuation and
\eqref{eq:uniform-h3-estimate} therefore give
\begin{equation}
 \sup_{0<\eta\le1}
 \left(
 \|u_\eta\|_{L^\infty(0,T;H^3)}
 +\|J_\eta u_\eta\|_{L^2(0,T;H^4)}
 +\|\partial_tu_\eta\|_{L^\infty(0,T;H^1)}
 \right)<\infty.
 \label{eq:uniform-space-estimates}
\end{equation}
\medskip
\noindent\emph{Step 3: passage to the limit.}
Interpolation gives
\[
 \|u_\eta(t)-u_\eta(s)\|_{H^2}
 \le C_R|t-s|^{1/2}.
\]
Rellich compactness and Arzel\`a--Ascoli yield, after passing to a
subsequence,
\[
 u_\eta\to u\quad\text{in }C([0,T];H^2),
 \qquad
 u_\eta\stackrel{*}{\rightharpoonup}u
 \quad\text{in }L^\infty(0,T;H^3).
\]
The mollifier estimate gives $J_\eta u_\eta\to u$ in
$C([0,T];H^2)$. Its weak $L^2H^4$ limit is therefore also $u$, so
\eqref{eq:uniform-space-estimates} implies $u\in L^2(0,T;H^4)$.
By \eqref{eq:abstract-coefficient-tame-estimate},
\[
 a(J_\eta u_\eta)\to a(u)
 \quad\text{in }C([0,T];H^m)\quad(m\ge0),
\]
and hence $F(J_\eta u_\eta)\to F(u)$ in $C([0,T];L^2)$.
Strong convergence of $J_\eta$ on $L^2$, uniform on compact subsets,
allows passage to the limit in \eqref{eq:regularized-equation}:
\begin{equation}
 u(t)=u_0+\int_0^tF(u(s))\,ds
 \quad\text{in }L^2.
 \label{eq:limit-integral-equation}
\end{equation}

The bounds $u\in L^\infty H^3\cap L^2H^4$ imply
$F(u)\in L^2H^2$, and thus $u_t\in L^2H^2$.
The Hilbert-triple continuity theorem, with
$H^4\subset H^3\subset H^2$, gives
\[
 u\in C([0,T];H^3),\qquad u(0)=u_0.
\]
Since $F:H^3\cap U\to H^1$ is continuous,
\eqref{eq:limit-integral-equation} then gives
\[
 u\in C^1([0,T];H^1),\qquad u_t=F(u)\quad\text{in }H^1.
\]

\medskip
\noindent\emph{Step 4: uniqueness and positive-time regularity.}
Let $u,v$ be two solutions with the same initial datum, and set
$z=u-v$. On a common bounded convex $H^1$-neighborhood in $U$, with
both $H^3$ norms bounded,
\[
 z_t=a(u)\Delta_hz+[a(u)-a(v)]\Delta_hv+Q(u)-Q(v),
\]
and
\[
 \|a(u)-a(v)\|_{L^\infty}
 +\|Q(u)-Q(v)\|_{L^2}
 \le C_R\|z\|_{H^1}.
\]
Pairing with $z$, integrating by parts, and applying Young's
inequality gives
\[
 \frac d{dt}\|z\|_{L^2}^2
 +\frac\lambda2\|dz\|_{L^2}^2
 \le C_R\|z\|_{L^2}^2.
\]
Gronwall's inequality proves local uniqueness, and repetition proves
uniqueness on the common interval of existence.

For each integer $k\ge4$, the coefficient bounds and tame estimate
\[
 \|Q(u)\|_{H^{k-1}}\le C_{k,R}(1+\|u\|_{H^k})
\]
give, whenever the indicated regularity holds,
\begin{equation}
 \frac d{dt}\|u\|_{H^k}^2
 +c_k\|u\|_{H^{k+1}}^2
 \le C_{k,R}(1+\|u\|_{H^k}^2).
 \label{eq:higher-order-energy-estimate}
\end{equation}
For $H^k$ initial data with $H^3$ norm below $R$, repeat
Steps~1--3 using both \eqref{eq:uniform-h3-estimate} and
\eqref{eq:higher-order-energy-estimate}. This gives local solutions
in $C_tH^k\cap L_t^2H^{k+1}$ while retaining the same $H^3$ bound.
Fix $0<\tau<T$ and choose $t_4\in(0,\tau)$ with $u(t_4)\in H^4$.
Restart the construction at $t_4$ and identify the resulting solution
with $u$ by uniqueness. Estimate
\eqref{eq:higher-order-energy-estimate}, together with the fixed
distance of $u([0,T])$ from $U^c$ in $H^1$, permits continuation to
$T$, giving
\[
 u\in C([t_4,T];H^4)\cap L^2(t_4,T;H^5).
\]
Inductively choose $t_k\in(t_{k-1},\tau)$ with $u(t_k)\in H^k$
and repeat. Thus $u\in C([\tau,T];H^k)$ for every $k$.
The equation and the smooth dependence of $a$ then give all time
derivatives, proving positive-time smoothness.
\end{proof}
\begin{proof}[Proof of Theorem~\ref{thm:gphf-short-time}]
Apply Lemma~\ref{lem:taylor-smoothing-nonlocal} with
$a=\widetilde a_\delta$, $A=\widetilde A_\iota$, and
$U=\mathcal V_\rho^1$.  Proposition~\ref{prop:infinitely-smoothing-coefficient}
verifies its hypotheses and gives an ambient solution
\begin{equation*}
 \begin{gathered}
 u\in C([0,T];H^3(\Sigma,\mathbb R^L))
       \cap C^1([0,T];H^1(\Sigma,\mathbb R^L)),\\
 \partial_tu=\widetilde a_\delta(u)
 \{\Delta_hu+\widetilde A_\iota(u)(du,du)\}.
 \end{gathered}
\end{equation*}
After decreasing $T$, continuity gives
$u(t)\in\mathcal V_\delta^3$ and $u(t)(\Sigma)\subset\mathcal U_0$.

To verify target preservation, set $R(z)=z-\pi(z)$ and
$q(z)=|R(z)|^2/2$.  The nearest-point projection satisfies
\[
 Dq_z[X]=\langle R(z),X\rangle,\qquad
 D^2q_z[X,X]=|(I-D\pi_z)X|^2
             -\langle R(z),D^2\pi_z[X,X]\rangle.
\]
Since $\widetilde A_\iota=-D^2\pi$ on $\mathcal U_0$, the chain rule yields
\begin{equation*}
 (\partial_t-\widetilde a_\delta(u)\Delta_h)q(u)
 =-\widetilde a_\delta(u)|(I-D\pi_u)du|_h^2\leq0.
\end{equation*}
Apply the maximum principle on $[\epsilon,T]$ and let
$\epsilon\downarrow0$.  As $q(u(\epsilon))\to q(u_0)=0$ uniformly,
we obtain $u(t)\in\iota(N)$.

Thus $f=\iota^{-1}\circ u$ solves the original equation by
\eqref{eq:extended-source-agrees} and
\eqref{eq:extrinsic-tension-field}.  Set
$\phi(t)=\widetilde\Phi_\delta(u(t))$.  The chain rule in every $H^m$ gives
\[
 \partial_t\phi
 =D\widetilde\Phi_\delta(u)[\partial_tu],
\]
and hence the asserted $C^1$ regularity with values in $C^\infty$.
Positive-time smoothness and uniqueness follow from
Lemma~\ref{lem:taylor-smoothing-nonlocal}, using the same ambient
extension on successive common neighborhoods.
Finally, the first variation gives
$dE_h(f)/dt=-\int_\Sigma a(f)|\tau_h(f)|^2\,dA_h$;
The established regularity justifies this identity up to $t=0$.
Integration proves \eqref{eq:short-time-energy-law}.
\end{proof}

\subsection{The maximal solution and the concentration alternative}
\label{subsec:continuation-concentration-alternative}

Theorem~\ref{thm:gphf-short-time} and uniqueness give a maximal strong
solution on $[0,T_{\max})$.
By \eqref{eq:uniform-small-scale-packet-mass}, the local construction has a uniform existence time on bounded $H^3$ initial data.  And the corresponding ambient extension has uniform ellipticity.  If $\|\iota\circ f(t)\|_{H^3}$ remained bounded at a
finite $T_{\max}$, restarting sufficiently close to $T_{\max}$ and
using uniqueness would extend the solution beyond it. We now show that finite-time failure of continuation leads to energy concentration.
\begin{lemma}
\label{lem:scale-normalized-small-energy}
There is a constant $0<\varepsilon_{\mathrm{reg}}
\leq\varepsilon_{\mathrm{pkt}}/4$, depending only on the fixed
geometric and packet data, $\kappa$, and $E_0$, with the following
property. Let $f$ be a smooth GPHF solution on $(s,T)$, where
$T<\infty$, with
$E_h(f(t))\leq E_0$. If, for some $0<r<r_*$,
\begin{equation}
 \sup_{s<t<T}\sup_{x\in\Sigma}
 E_h\bigl(f(t);B_{4r}(x)\bigr)
 \leq\varepsilon_{\mathrm{reg}},
 \label{eq:gphf-uniform-small-energy}
\end{equation}
then, for every $s<s'<T$ and every $k\geq1$,
\begin{equation*}
 \sup_{s'<t<T}
 \|\nabla^k(\iota\circ f(t))\|_{L^\infty(\Sigma)}<\infty.
\end{equation*}
These bounds may additionally depend on $k$, $r$, $s'-s$, and $T-s$;
the threshold $\varepsilon_{\mathrm{reg}}$ is independent of them.
\end{lemma}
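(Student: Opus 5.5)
The plan is to reduce this $\varepsilon$-regularity statement to the classical small-energy regularity theory for harmonic map heat flow (Struwe), after obtaining uniform control of the conformal coefficient $a(f)=e^{-2\Phi(f)}$. The key observation is that the hypothesis \eqref{eq:gphf-uniform-small-energy} with $\varepsilon_{\mathrm{reg}}\le\varepsilon_{\mathrm{pkt}}/4$ kills all packets at scales below $r/2$. Indeed, $\chi_{p,\varrho}$ is supported in $B_{2\varrho}(p)\subset B_{4r}(p)$ for $\varrho\le 2r$, so $M_{f(t)}(p,\varrho)\le\varepsilon_{\mathrm{pkt}}/4$ and $W_{f(t)}(p,\varrho)=0$ for all $\varrho\le 2r$ and all $t\in(s,T)$. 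Thus the scale integral in \eqref{eq:heat-reconstructed-pw-density} runs only over $[2r,r_*]$. On this range the heat kernel $H_{\varrho^2}$ and all its spatial derivatives are bounded by constants depending only on $r$ and the geometry. Since $0\le W_f\le1$, I obtain $\|\nu_{f(t)}\|_{C^m}\le C_m(r)$ uniformly in $t$. Schauder or elliptic theory then gives $\|\Phi(f(t))\|_{C^m}\le C_m(r)$. In particular $\lambda_r\le a(f(t))\le\Lambda_r$ and $\|\nabla^m a\|_{L^\infty}\le C_m(r)$ uniformly in $t\in(s,T)$.

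Time regularity of $a$ is also needed. By the chain rule $\partial_t\Phi=D\Phi(f)[\partial_t f]$, and Proposition~\ref{prop:infinitely-smoothing-coefficient} bounds this in every $H^m$ by $C\|\partial_t u\|_{H^1}$. That quantity is not a priori controlled, so I would avoid needing a pointwise bound on $\partial_t a$. Instead I would use only $L^2_t$ type control. Writing $\partial_t u=a\,\tau$ and differentiating the mass gives $D M[z]=\int\chi\langle du,dz\rangle$. After integrating by parts onto $\nabla\chi$ and $\Delta u$, the derivative of the source can be bounded by $\int|\nabla\chi|\,|du|\,|\partial_t u|\lesssim r^{-1}E_0^{1/2}\|\partial_t u\|_{L^2}$. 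The energy identity \eqref{eq:short-time-energy-law} gives $\int_s^T\|\partial_t u\|_{L^2}^2\le\Lambda_r E_0$, so $\partial_t a\in L^2_tC^m_x$ with bounds depending on $r$.

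With these coefficient bounds, I would run Struwe's argument for the equation $\partial_t u=a(\Delta_h u+A(u)(du,du))$. Because $a$ is a smooth positive scalar multiplier, the local energy inequality holds: $\frac{d}{dt}\int\psi^2 e(u)\le -\int\psi^2 a|\tau|^2+C\int|\nabla\psi||du||\partial_t u|$. The localized Bochner/Ladyzhenskaya estimate on balls of radius $r$ gives $\int_{s'}^T\int|\nabla^2 u|^2\le C$, provided the local energy is below a universal threshold times the ellipticity ratio. Here lies the main obstacle. The threshold must be independent of $r$, yet $\lambda_r,\Lambda_r$ depend on $r$. I would resolve this by noting that in the Struwe estimate the coefficient enters only through the term $a\,A(u)(du,du)$ against $a\Delta u$. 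Dividing the $\Delta u$-testing by $a$, that is, testing the equation with $-\Delta u$ weighted by $a^{-1}$, makes the absorption step $\int|du|^4\le C\varepsilon\int|\nabla^2u|^2+\dots$ coefficient-free. The $r$-dependence is then pushed into lower-order error terms containing $\nabla a$ and $\partial_t a$, which the previous paragraph controls. This is why $\varepsilon_{\mathrm{reg}}$ depends only on the geometry, the packet data, $\kappa$ and $E_0$ (through $\varepsilon_{\mathrm{pkt}}$ and the Ladyzhenskaya constant). It is also why the remaining bounds may depend on $r$, $s'-s$ and $T-s$.

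Finally, from $u\in L^\infty_tH^1\cap L^2_tH^2$ on $(s',T)$, I would bootstrap as in \eqref{eq:higher-order-energy-estimate}, now with uniformly controlled smooth coefficients. First I would obtain $L^\infty_t H^2$ and $L^2_tH^3$ via the estimate for $\partial_t u$ (differentiate the equation in time; $\partial_t a$ terms are handled by the $L^2_t$ bound). Then $H^k$ bounds for all $k$ follow on successively shorter intervals beyond $s'$. Sobolev embedding converts these into the claimed $L^\infty$ bounds on $\nabla^k(\iota\circ f)$.
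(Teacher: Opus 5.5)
Your argument is correct and takes a genuinely different route from the paper.

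\textbf{Comparison of routes.} Both arguments begin the same way: the small-energy hypothesis makes every packet at scales $\lesssim r$ inactive.

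The paper then extracts a \emph{scale-invariant} consequence from the Green representation, namely $r^m\|\nabla^m\Phi(f(t))\|_{L^\infty}\le\kappa C_WC_m$. Hence on each $B_{4r}(x_0)$ the ratio $b=a/a(x_0,t)$ is uniformly elliptic, with derivative bounds independent of $r$. The paper rescales space by $r$ and time by $\theta(t)=r^{-2}\int_s^ta(x_0,q)\,dq$. It then proves a pointwise gradient bound from three ingredients: the Bochner inequality $(\partial_\theta-b\Delta_g)e\le C(e+e^2)$, point selection, and a parabolic mean-value inequality written in divergence form, so that no time derivative of $b$ is needed. Only afterwards does it run spatial $H^k$ energy estimates.

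You instead accept crude $r$-dependent bounds $\lambda_r\le a\le\Lambda_r$ and $\|\nabla^m a\|_{L^\infty}\le C_m(r)$, and use Struwe's integral estimate. Your key observation is sound. Writing $\tau(u)=a^{-1}\partial_tu$, the absorption step
$\|\nabla^2u\|_{L^2}^2\le C\|\tau(u)\|_{L^2}^2+C\varepsilon\|\nabla^2u\|_{L^2}^2+C(r)E_0$
involves no coefficient at all. So the threshold is purely geometric, and $a$ enters only through $\int_s^T\|\tau(u)\|_{L^2}^2\,dt\le\lambda_r^{-1}E_0$.

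Your route is more elementary: it needs no Moser iteration and no oscillation estimate for $\Phi$. The paper's route yields a local, scale-invariant pointwise estimate. It also isolates the bounded oscillation of the Green potential at inactive scales, which is the same mechanism behind the mobility comparisons in Sections~4 and~5.

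\textbf{One correction, in your time-regularity paragraph.} The integration by parts also produces the term $\int\chi\langle\Delta u,\partial_tu\rangle\,dA_h=\int\chi a^{-1}|\partial_tu|^2\,dA_h$, which you dropped; analogous terms appear in the barycenter and variance derivatives. Consequently you only get $\partial_ta\in L^1_tC^m_x$, not $L^2_t$.

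This is harmless, since $L^1_t$ still suffices for Gronwall in the $\|\partial_tu\|_{L^2}^2$ estimate. Alternatively, you can avoid $\partial_ta$ entirely. Gagliardo--Nirenberg gives
$\frac{d}{dt}\|u\|_{H^2}^2+c_r\|u\|_{H^3}^2\le C_r(1+\|u\|_{H^2}^2)^2$,
and its Gronwall factor $C_r(1+\|u\|_{H^2}^2)$ lies in $L^1_t$ by your $L^2_tH^2$ bound.
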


\begin{proof}
Write $u=\iota\circ f$ and $a(x,t)=a(f(t))(x)$.
Since $\operatorname{supp}\chi_{p,q}\subset B_{2q}(p)$,
\eqref{eq:gphf-uniform-small-energy} gives
\[
 M_{f(t)}(p,q)\leq\varepsilon_{\mathrm{reg}}
 \leq\frac{\varepsilon_{\mathrm{pkt}}}{4},
 \qquad 0<q\leq r.
\]
Consequently $W_{f(t)}(p,q)=0$ on these scales.

Let $G_h$ be the mean-zero Green kernel of $-\Delta_h$ and set
\begin{equation}
 K_q(x,z):=\int_\Sigma G_h(x,y)H_{q^2}(y,z)\,dA_h(y)
 =\int_{q^2}^{\infty}
 \left(H_\tau(x,z)-\frac1{\operatorname{Area}_h(\Sigma)}\right)d\tau.
 \label{eq:heat-regularized-green-kernel}
\end{equation}
The heat-kernel derivative bounds at small times and exponential
decay of the mean-zero heat kernel at large times give
\[
 \|K_q(\cdot,z)\|_{L^\infty}
 \leq C(1+|\log q|),
\]
\begin{equation}
 \|\nabla_x^mK_q(\cdot,z)\|_{L^\infty}
 \leq C_mq^{-m},\qquad m\geq1,
 \label{eq:green-heat-kernel-derivative-bound}
\end{equation}
uniformly for $0<q<r_*$ and $z\in\Sigma$.
Introduce the parameter measure
\begin{equation}
 d\Lambda_t(p,q)
 :=W_{f(t)}(p,q)\frac{dA_h(p)\,dq}{q^3}.
 \label{eq:packet-parameter-measure}
\end{equation}
By the total-weight bound \eqref{eq:total-packet-weight-bound} and
the energy law \eqref{eq:short-time-energy-law},
$\Lambda_t(\Sigma\times(0,r_*))\leq C_W$.
Green reconstruction reads
\begin{equation}
 \Phi(f(t))(x)
 =\kappa\int_{\Sigma\times(0,r_*)}
 K_q\bigl(x,b_{f(t)}(p,q)\bigr)\,d\Lambda_t(p,q).
 \label{eq:potential-as-parameter-integral}
\end{equation}
Here the integral is supported on $q\geq r$, so
\begin{equation}
 \|\Phi(f(t))\|_{L^\infty}
 \leq C\kappa C_W(1+|\log r|),\qquad
 r^m\|\nabla^m\Phi(f(t))\|_{L^\infty}
 \leq\kappa C_WC_m.
 \label{eq:gphf-small-energy-potential-bounds}
\end{equation}
In particular, for fixed $r$ there is $\lambda_r>0$ such that
$\lambda_r\leq a\leq\lambda_r^{-1}$ throughout $\Sigma\times(s,T)$.
For $x\in B_{4r}(x_0)$, the first-derivative bound gives
$|\Phi(x,t)-\Phi(x_0,t)|\leq C$, independently of $r$.
The chain rule therefore yields
\[
 c_0^{-1}\leq\frac{a(x,t)}{a(x_0,t)}\leq c_0,
 \qquad
 r^m\left\|\nabla^m\frac{a(\cdot,t)}{a(x_0,t)}
        \right\|_{L^\infty(B_{4r}(x_0))}\leq c_m,
\]
with $c_0,c_m$ independent of $r$, $x_0$, and $t$.

Fix $x_0$ and use the coordinates $x=\exp_{x_0}(ry)$ and the time
variable
\[
 \theta(t):=r^{-2}\int_s^t a(x_0,q)\,dq.
\]
The rescaled map $v(y,\theta(t))=u(\exp_{x_0}(ry),t)$ satisfies on
$B_4$ the equation
\[
 v_\theta=b\{\Delta_gv+A_\iota(v)(dv,dv)\},
 \qquad b(y,\theta(t))=\frac{a(\exp_{x_0}(ry),t)}{a(x_0,t)},
\]
where $g$ is the rescaled domain metric. The ellipticity and all spatial
coefficient bounds are uniform in $r$ and $x_0$.
For $e=|dv|_g^2$, the Bochner formula gives
\[
 (\partial_\theta-b\Delta_g)e\leq C(e+e^2).
\]
The term involving $db$ is bounded by
$2|db|\,|\tau_g(v)|\,|dv|$ and is absorbed into the negative Hessian
term, up to $Ce$.

We recall the point-selection argument to make the independence of
the energy threshold explicit. On a backward cylinder
$Q_\ell=B_\ell\times[\theta_1-\ell^2,\theta_1]$ contained in the
rescaled domain, with $0<\ell\leq1$, put
\[
 d(y,\theta)
 :=\min\{\ell-|y|,\sqrt{\theta-\theta_1+\ell^2}\}.
\]
If $\max_{Q_\ell}d^2e$ is sufficiently large, choose a maximizing
point $(y_*,\theta_*)$ and write $e_*=e(y_*,\theta_*)$ and
$d_*=d(y_*,\theta_*)$. On its backward cylinder of radius $d_*/2$,
$e\leq4e_*$. Rescaling space by $e_*^{1/2}$ and time by $e_*$ gives a
unit backward cylinder on which
\[
 \widehat e(0,0)=1,\qquad 0\leq\widehat e\leq4,\qquad
 (\partial_\vartheta-\widehat b\Delta_{\widehat g})\widehat e
 \leq C\widehat e.
\]
The parabolic mean-value inequality and the conformal invariance of
two-dimensional energy imply
\[
 1\leq C_{\mathrm{mv}}\iint_{Q_1}\widehat e
       \,dA_{\widehat g}\,d\vartheta
 \leq2C_{\mathrm{mv}}\varepsilon_{\mathrm{reg}}.
\]
Choosing
$\varepsilon_{\mathrm{reg}}
\leq\min\{\varepsilon_{\mathrm{pkt}}/4,(4C_{\mathrm{mv}})^{-1}\}$
excludes this possibility. The mean-value estimate requires no time
derivative of $b$, since
$b\Delta_g e=\operatorname{div}_g(b\nabla e)-\langle db,de\rangle_g$.
Moreover,
$\theta(s')\geq\lambda_r(s'-s)/r^2>0$.
Thus cylinders with
$\ell\leq\min\{1,\sqrt{\theta(s')/2}\}$ give a uniform bound for
$|du|$ on $\Sigma\times(s',T)$.

For completeness, higher derivatives follow using only spatial
coefficient bounds. First obtain the gradient bound on
$\Sigma\times(s_1,T)$, where $s_1=(s+s')/2$.
By \eqref{eq:gphf-small-energy-potential-bounds}, all spatial
derivatives of $a$ are bounded for this fixed $r$.
Spatial differentiation, the Sobolev product estimates, and
integration by parts give, for $Y_k(t)=\|u(t)\|_{H^k}^2$,
\[
 \frac d{dt}Y_k+c_rY_{k+1}\leq C_{k,r}(1+Y_k),\qquad k\geq1,
\]
where the constants also depend on the established gradient bound.
The nonlinear term is controlled by
\[
 \|aA_\iota(u)(du,du)\|_{H^{k-1}}
 \leq C_{k,r}(1+\|u\|_{H^k}).
\]
Since $Y_1$ is bounded by the energy and compactness of $N$,
integration first bounds $\int Y_2$ on an interior time interval.
Choose a slice on which $Y_2$ is bounded and apply Gronwall's
inequality. Repeating this procedure on successive subintervals of
$(s_1,s')$ bounds every $Y_k$ on $(s',T)$.
Sobolev embedding proves the assertion.
\end{proof}

For $p\in\Sigma$ and $0<T\leq T_{\max}$ with $T<\infty$, define
the concentration mass
\[
 \mathcal K(p,T)
 :=\lim_{r\downarrow0}\limsup_{t\uparrow T}
 E_h\bigl(f(t);B_r(p)\bigr).
\]
\begin{theorem}[Continuation--concentration alternative]
\label{thm:continuation-concentration-alternative}
Let $f$ be the maximal strong solution with $E_h(f_0)\leq E_0$, and
set $\varepsilon_{\mathrm{cont}}:=\varepsilon_{\mathrm{reg}}/4$,
where $\varepsilon_{\mathrm{reg}}$ is the constant in
Lemma~\ref{lem:scale-normalized-small-energy}.
If $T_{\max}<\infty$, then
\[
 \mathcal K(p,T_{\max})\geq\varepsilon_{\mathrm{cont}}
 \quad\text{for some }p\in\Sigma.
\]
\end{theorem}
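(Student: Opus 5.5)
We argue by contraposition. Assume $T_{\max}<\infty$ but $\mathcal K(p,T_{\max})<\varepsilon_{\mathrm{cont}}$ for every $p\in\Sigma$. We will derive uniform $H^3$ bounds up to $T_{\max}$. As explained before Lemma~\ref{lem:scale-normalized-small-energy}, such bounds let us restart the flow close to $T_{\max}$ with a uniform existence time and, by uniqueness, extend the solution past $T_{\max}$. This contradicts maximality.

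\emph{Step 1: localization by compactness.} Fix $p\in\Sigma$. By the definition of $\mathcal K$, there are a radius $r_p>0$ and a time $t_p<T_{\max}$ with
\[
E_h\bigl(f(t);B_{r_p}(p)\bigr)<\varepsilon_{\mathrm{cont}}=\varepsilon_{\mathrm{reg}}/4
\qquad\text{for } t_p<t<T_{\max}.
\]
The balls $B_{r_p/2}(p)$ cover $\Sigma$, so finitely many of them, with centers $p_1,\dots,p_m$, already do. Set
\[
s:=\max_i t_{p_i},\qquad r:=\tfrac18\min_i r_{p_i},
\]
and shrink $r$ below $r_*$ if necessary. Take any $x\in\Sigma$ and choose $i$ with $x\in B_{r_{p_i}/2}(p_i)$. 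Then $B_{4r}(x)\subset B_{r_{p_i}}(p_i)$. Hence for every $s<t<T_{\max}$ we get $E_h(f(t);B_{4r}(x))<\varepsilon_{\mathrm{reg}}/4\le\varepsilon_{\mathrm{reg}}$, which is exactly hypothesis \eqref{eq:gphf-uniform-small-energy} on $(s,T_{\max})$. The energy bound $E_h(f(t))\le E_0$ holds by \eqref{eq:short-time-energy-law}. The solution is smooth for positive times by Theorem~\ref{thm:gphf-short-time}. So Lemma~\ref{lem:scale-normalized-small-energy} applies with $T=T_{\max}$. For any fixed $s'\in(s,T_{\max})$ and all $k\ge1$ it gives $\sup_{s'<t<T_{\max}}\|\nabla^k(\iota\circ f(t))\|_{L^\infty}<\infty$. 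Since $N$ is compact, $\iota\circ f$ is bounded, and therefore $\sup_{s'<t<T_{\max}}\|\iota\circ f(t)\|_{H^3}<\infty$.

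\emph{Step 2: continuation.} With this uniform $H^3$ bound, the local construction has an existence time $T_0>0$ that is uniform over the bounded $H^3$ set. We restart from $f(t_1)$ with $t_1>T_{\max}-T_0/2$ and obtain a solution on $[t_1,t_1+T_0]$. By uniqueness it agrees with $f$ on the overlap, so $f$ extends beyond $T_{\max}$. This is the contradiction.

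\emph{Main obstacle.} The delicate point is the uniformity of the restart time in Step~2. The constants $\delta$, $r_\delta$ and $\rho$ in the ambient extension were chosen relative to the single datum $u_0$. We must check that the choice can be made uniformly over the $H^3$-bounded family $\{u(t)\}$. This works because $C_\delta$ is controlled by the $H^3$ bound via $H^3\hookrightarrow C^{1,\alpha}$, which makes $r_\delta$ uniform through \eqref{eq:uniform-small-scale-packet-mass}. In turn the ellipticity constants in \eqref{eq:uniform-ellipticity-extended-coefficient} and the Gronwall time in Lemma~\ref{lem:taylor-smoothing-nonlocal} depend only on $R$. I would simply cite the paper's remark that the local construction has a uniform existence time on bounded $H^3$ data.
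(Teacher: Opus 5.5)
Your proof is correct and follows the paper's argument essentially step for step. You use a finite subcover by balls whose radii are a fixed fraction of the localizing radii to get a single $r<r_*$ and start time $s$ such that every $B_{4r}(x)$ carries energy below $\varepsilon_{\mathrm{reg}}$ on $(s,T_{\max})$; Lemma~\ref{lem:scale-normalized-small-energy} then gives uniform $H^3$ bounds, and restarting with a uniform existence time contradicts maximality. The paper likewise simply invokes the uniform existence time on bounded $H^3$ data, so your remark on the restart time rests on the same justification.
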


\begin{proof}
Suppose instead that $\mathcal K(p,T_{\max})
<\varepsilon_{\mathrm{cont}}$ for every $p$.
Choose $R_p>0$ such that
\[
 \limsup_{t\uparrow T_{\max}}
 E_h\bigl(f(t);B_{R_p}(p)\bigr)
 <\varepsilon_{\mathrm{cont}},
\]
and take a finite subcover $\{B_{R_{p_i}/4}(p_i)\}_{i=1}^J$ of
$\Sigma$. Choose $0<r<r_*$ with
$r\leq\min_iR_{p_i}/16$. Every ball $B_{4r}(x)$ is then contained
in some $B_{R_{p_i}}(p_i)$. By finiteness of the cover, there is one
$s<T_{\max}$ such that
\[
 \sup_{s<t<T_{\max}}\sup_{x\in\Sigma}
 E_h\bigl(f(t);B_{4r}(x)\bigr)
 <2\varepsilon_{\mathrm{cont}}
 <\varepsilon_{\mathrm{reg}}.
\]
Lemma~\ref{lem:scale-normalized-small-energy} now gives a uniform
$H^3$ bound on every shorter interval $(s',T_{\max})$, contradicting
the continuation criterion. 
\end{proof}

\section{Exclusion of Finite--Time Bubbling}
\label{sec:finite-time-exclusion}
In this section, we exclude finite-time energy concentration and
establish global existence for the GPHF. A harmonic profile extracted from a presumed concentration provides a block of accepted PW packets, the persistence of which would force energy concentration at a fixed earlier time, contradicting the regularity.
\begin{lemma}
\label{lem:universal-sphere-packet}
For fixed $N,E_0$ and $\chi$, there are constants
\begin{equation*}
 m_{\mathrm{sph}},\rho_{\mathrm{sph}},c_x,c_s>0
\end{equation*}
such that every nonconstant finite-energy harmonic map
$\omega:\mathbb R^2\to N$ with $E(\omega)\leq E_0$ admits a pair
$(z_\omega,r_\omega)$ satisfying
\begin{equation*}
 M_\omega(z,r)\geq m_{\mathrm{sph}},\qquad
 \rho_\omega(z,r)\geq\rho_{\mathrm{sph}}
\end{equation*}
throughout the relative block
\begin{equation}
 |z-z_\omega|\leq c_xr_\omega,\qquad
 |\log(r/r_\omega)|\leq c_s.
 \label{eq:universal-sphere-parameter-block}
\end{equation}
The packet variables are Euclidean and use the given plane coordinate;
only translation and dilation are required.
\end{lemma}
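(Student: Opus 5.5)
The plan is to fix the center--scale pair of $\omega$ by a minimal-radius energy selection, and then to read off both packet bounds from $\varepsilon$-regularity for harmonic maps. Because translation and dilation are the only operations used, every constant will depend only on $N$, $E_0$ and $\chi$.

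Let $\varepsilon_N>0$ be the energy gap for nonconstant finite-energy harmonic maps $\mathbb R^2\to N$. Such maps extend to harmonic spheres by removability of point singularities, so $E(\omega)\ge\varepsilon_N$. Let $\varepsilon_{\mathrm h}$ be the $\varepsilon$-regularity threshold for harmonic maps: if $E(\omega;B_{2s}(y))\le\varepsilon_{\mathrm h}$, then $s^2|\nabla\omega|^2(y)\le C_{\mathrm h}E(\omega;B_{2s}(y))$. Fix
\[
 \delta:=\min\{\varepsilon_N,\varepsilon_{\mathrm h}\}/2 .
\]

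\textbf{Step 1: choice of $(z_\omega,r_\omega)$.} Set
\[
 r_\omega:=\inf\Bigl\{r>0:\ \sup_z E(\omega;B_r(z))\ge\delta\Bigr\}.
\]
This infimum is positive, since $\omega$ is smooth with $|\nabla\omega|$ bounded. It is finite, since $E(\omega)\ge\varepsilon_N>\delta$. Energy decays at infinity, so the supremum is attained at a point $z_\omega$, and continuity in $r$ gives $E(\omega;B_{r_\omega}(z_\omega))=\delta$. Minimality gives $E(\omega;B_{r_\omega}(y))\le\delta$ for every $y$. Applying $\varepsilon$-regularity with $2s=r_\omega$ then yields the uniform pointwise bound
\[
 |\nabla\omega|^2\le 4C_{\mathrm h}\,\delta\,r_\omega^{-2}\qquad\text{on }\mathbb R^2 .
\]

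\textbf{Step 2: mass bound.} Take $c_x=\tfrac12$ and $c_s=\log 2$, so that $r\in[r_\omega/2,2r_\omega]$. In the range $r\ge\tfrac32r_\omega$, the cutoff $\chi_{z,r}\equiv1$ on $B_r(z)\supset B_{r_\omega}(z_\omega)$, and hence $M_\omega(z,r)\ge\delta$. For smaller $r$ I would instead shift the block, i.e.\ replace $r_\omega$ by $3r_\omega$ in the statement, so that $r\ge\tfrac32r_\omega$ holds throughout. Then $M\ge\delta=:m_{\mathrm{sph}}$, and trivially $M\le E_0$.

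\textbf{Step 3: variance bound.} Let $b$ be the Euclidean barycenter; it exists and is unique because $\tfrac12|x-y|^2$ is strongly convex. By Step 1, for $s=c\,r_\omega$ we have
\[
 \mu_\omega(B_s(b))\le \pi s^2\cdot 2C_{\mathrm h}\delta r_\omega^{-2}\le\delta/2\le M/2,
\]
once $c$ is small, depending only on $C_{\mathrm h}$. So at least half of the packet mass lies at distance at least $s$ from $b$, which gives $\sigma^2\ge s^2/2$. Since $r\le 6r_\omega$ on the block, we get $\rho_\omega(z,r)=\sigma/r\ge c/(6\sqrt2)=:\rho_{\mathrm{sph}}$.

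\textbf{Main obstacle.} The difficulty is uniformity over a noncompact family of harmonic maps, which may carry bubble trees. A compactness argument would fail here; the minimal-radius selection avoids it because the normalized configuration has small energy on all unit balls. Care is needed in two places: in the $\varepsilon$-regularity constant, and in the attainment of the supremum, which I would obtain from decay of energy at infinity.
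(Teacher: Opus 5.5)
Your proof is correct, but it takes a genuinely different route from the paper.

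\textbf{The paper's route.} The paper takes $z_\omega$ to be a point where $|D\omega|$ attains its maximum $L$. Such a point exists by the $O(|z|^{-2})$ decay coming from removability. It then rescales to $v(\xi)=\omega(z_\omega+L^{-1}\xi)$, so that $|Dv|\le 1=|Dv(0)|$. The interior $C^{1,\alpha}$ estimate keeps $|Dv|\ge\frac12$ on a ball $B_{4\delta}$ with $\delta=\delta(N)$. With $r_\omega=\delta/L$, this gives the pointwise bounds $L^2/8\le e(\omega)\le L^2/2$ on $B_{4r_\omega}(z_\omega)$. Both packet bounds then follow from the density lower bound alone: the mass from area, and the variance from $\int_{B_r(z)}|y-b|^2\,dy\ge\pi r^4/2$ divided by $M\le E_0$.

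\textbf{Your route.} You fix the scale as the first radius at which a fixed quantum $\delta$ of energy fits in a ball. The energy gap and $\varepsilon$-regularity then give a global upper bound $|\nabla\omega|^2\le 4C_{\mathrm h}\delta r_\omega^{-2}$. You obtain the variance from non-concentration rather than from a density lower bound.

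\textbf{What each buys.} Your version makes $m_{\mathrm{sph}}$ and $\rho_{\mathrm{sph}}$ depend only on $N$, not on $E_0$. The paper's version is shorter and monotone in the measure: a density lower bound on a ball survives adding mass. That is exactly how the estimate is reused in Section~5 for limit measures $\mu=\mu_Q+\sum m_x\delta_x\ge\mu_Q$. Your Step~3 would not transfer verbatim there, since an atom inside $B_s(b)$ may carry most of the packet mass. You would have to bound the numerator by the $\mu_Q$-part and divide by $M_\mu\le E_0$, which reintroduces $E_0$.

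\textbf{A bookkeeping slip in Step~2.} If you triple the radius but keep $c_x=\frac12$, the admissible offset becomes $|z-z_\omega|\le\frac32r_\omega$. The inclusion $B_{r_\omega}(z_\omega)\subset B_r(z)$ then needs $r\ge\frac52r_\omega$, which fails at the bottom of the block, $r=\frac32r_\omega$. Take $c_x=\frac16$ together with the tripled radius, so that $|z-z_\omega|\le\frac12r_\omega$ and $r\in[\frac32r_\omega,6r_\omega]$. With that change the rest of your argument goes through unchanged.
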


\begin{proof}
By removable singularities \cite{SacksUhlenbeck1981}, $\omega$ extends
smoothly over infinity, so $|D\omega(z)|=O(|z|^{-2})$ there.
Hence $L=\max_{\mathbb R^2}|D\omega|>0$ is attained at some $z_\omega$.
The harmonic map $v(\xi)=\omega(z_\omega+L^{-1}\xi)$ satisfies
$|Dv|\leq1$ and $|Dv(0)|=1$.  Interior elliptic estimates give a
uniform $C^{0,\alpha}$ bound for $Dv$ on $B_1$, depending only on $N$.
Choose $\delta>0$ so that $|Dv|\geq1/2$ on $B_{4\delta}$ and put
$r_\omega=\delta/L$.  The energy density of $\omega$ is then between
$L^2/8$ and $L^2/2$ on $B_{4r_\omega}(z_\omega)$.

Take $c_x=1/4$ and $c_s=\log(3/2)$.  Every cutoff support in
\eqref{eq:universal-sphere-parameter-block} lies strictly inside
$B_{4r_\omega}(z_\omega)$, and $r\geq r_\omega/2$.
Since $\chi_{z,r}=1$ on $B_r(z)$,
\[
 M_\omega(z,r)\geq\frac{\pi L^2r^2}{8}
 \geq\frac{\pi\delta^2}{32}.
\]
For every $b\in\mathbb R^2$,
$\int_{B_r(z)}|y-b|^2\,dy\geq\pi r^4/2$.
Using $M_\omega\leq E_0$ therefore gives
\[
 \rho_\omega(z,r)^2
 \geq\frac{\pi L^2r^2}{16E_0}
 \geq\frac{\pi\delta^2}{64E_0}.
\]
Thus one may take $m_{\mathrm{sph}}=\pi\delta^2/32$ and
$\rho_{\mathrm{sph}}=(\pi\delta^2/(64E_0))^{1/2}$.
\end{proof}

Choose the free packet threshold so that
\begin{equation}
 0<\varepsilon_{\mathrm{pkt}}
 <\min\{m_{\mathrm{sph}},E_0\rho_{\mathrm{sph}}^2,E_0\}.
 \label{eq:finite-time-packet-compatibility}
\end{equation}

\begin{theorem}[Finite-time regularity]
\label{thm:finite-time-regularity}
Set
\begin{equation}
 a_0:=2c_sc_{\mathrm{vol}}c_x^2e^{-2c_s},
 \qquad \gamma:=\frac{\kappa a_0}{\pi}.
 \label{eq:finite-time-gamma}
\end{equation}
If the fixed packet data satisfy
\eqref{eq:finite-time-packet-compatibility} and
$
 \gamma\geq2,
$
then   $T_{\max}=\infty$.
\end{theorem}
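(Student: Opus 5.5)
We argue by contradiction. Suppose $T_{\max}<\infty$. By Theorem~\ref{thm:continuation-concentration-alternative} there is a point $p_0\in\Sigma$ with $\mathcal K(p_0,T_{\max})\geq\varepsilon_{\mathrm{cont}}$. The first step is a standard blow-up. We choose times $t_i\uparrow T_{\max}$, centers $x_i\to p_0$ and scales $\lambda_i\downarrow0$ by maximizing a suitable gradient quantity. We then rescale both space and the time variable $\theta$ of Lemma~\ref{lem:scale-normalized-small-energy}, so that the conformal factor $a$ is frozen at the center. The energy identity \eqref{eq:short-time-energy-law} gives $\int_0^{T_{\max}}\int a|\tau_h|^2<\infty$. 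Hence we may select the times so that the rescaled tension field tends to zero in $L^2$ on compact sets. Local $\varepsilon$-regularity, as in the mean-value argument of Lemma~\ref{lem:scale-normalized-small-energy}, then extracts a nonconstant finite-energy harmonic map $\omega:\mathbb R^2\to N$ with $E(\omega)\leq E_0$.

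Next, Lemma~\ref{lem:universal-sphere-packet} gives a parameter block \eqref{eq:universal-sphere-parameter-block} around $(z_\omega,r_\omega)$. On this block $M_\omega\geq m_{\mathrm{sph}}$ and $\rho_\omega\geq\rho_{\mathrm{sph}}$. By \eqref{eq:finite-time-packet-compatibility} these bounds are strictly above the thresholds $\varepsilon_{\mathrm{pkt}}/2$ and $\rho_+$; note that $\rho_+^2=\varepsilon_{\mathrm{pkt}}/(16E_0)<\rho_{\mathrm{sph}}^2$. Because $M$ and $\rho$ depend continuously on the measure, the strong local $H^1$ convergence transfers the block back to $f(t)$ for $t$ near $t_i$. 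On the pulled-back block, at physical scale $r\approx\lambda_ir_\omega$, we then have $W_{f(t)}\equiv1$. This block has $p$-area about $c_{\mathrm{vol}}c_x^2r^2$ and logarithmic $r$-width $2c_s$. Its $\Lambda_t$-mass from \eqref{eq:packet-parameter-measure} is therefore bounded below by $a_0$ times a factor $r^{-2}$. We then insert this block into \eqref{eq:potential-as-parameter-integral}, using the logarithmic behaviour $K_q(x,z)\approx-\frac1{2\pi}\log\max(q,d(x,z))$ of the heat-regularized Green kernel. The result is a lower bound
\[
\Phi(f(t))(x)\geq\frac{\kappa a_0}{2\pi}\,\bigl|\log r\bigr|-C
\]
near the concentration point. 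After the factor $2$ in $a=e^{-2\Phi}$, this yields $a\lesssim r^{\gamma}$ with $\gamma=\kappa a_0/\pi$. The remaining packets contribute at most $C\kappa C_W$ by Lemma~\ref{lem:total-packet-weight}, except for a negative part, which I would control by showing that packets at other scales only raise $\Phi$ near $p_0$, up to a bounded error.

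The third step converts the slow diffusion into a contradiction. Near the packet the effective time is $\theta=r^{-2}\int a\,dt$, so the intrinsic time elapsed on $[t_0,T_{\max})$ is at most $r^{\gamma-2}(T_{\max}-t_0)$. With $\gamma\geq2$ this is bounded independently of $r\to0$. This is precisely the regime in which the concentrated profile cannot have formed during the available intrinsic time. To make this precise I would use a localized energy inequality: multiply the energy identity by a cutoff $\phi$ and integrate, obtaining
\[
\frac{d}{dt}\int\phi^2 e\leq C\int a|d\phi|^2 e.
\]
With $\phi$ adapted to radius $R\gg r$, we have $a\lesssim R^{\gamma}$ on its support. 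So the energy in $B_R(x_i)$ changes by at most $CR^{\gamma-2}E_0(T_{\max}-t)$ over time. Letting the nested radii go to zero shows that the energy $\geq m_{\mathrm{sph}}$ sitting in balls of radius $\to0$ was already present at a fixed earlier time $t_0<T_{\max}$. But $f(t_0)$ is smooth by Theorem~\ref{thm:gphf-short-time}, so $\mathcal K$ vanishes at $t_0$, a contradiction.

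The main obstacle is making the lower bound for $\Phi$ uniform over a whole range of radii $R\in[r,R_0]$ and over times back to $t_0$, not only at the blow-up times $t_i$. Such uniformity requires the accepted packet block to persist as the concentration forms. I would handle this by a continuity (bootstrap) argument. Let $t_*$ be the earliest time after which mass $\geq m_{\mathrm{sph}}/2$ stays inside $B_R$. On $[t_*,T_{\max})$ the packet lower bound holds, and the localized estimate forbids leakage. The case $\gamma=2$ is borderline, and there I expect to need the strict inequalities in \eqref{eq:finite-time-packet-compatibility} to absorb constants.
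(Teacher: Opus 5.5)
\textbf{There is a genuine gap in the persistence step.} Your architecture matches the paper's: harmonic profile, universal sphere block, Green barrier $a\lesssim r^\gamma$, transport that stays bounded when $\gamma\ge2$, persistence back to a $j$-independent time, and contradiction with smoothness there. But the persistence step, which you rightly call the crux, does not close as you propose it.

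The barrier of Proposition~\ref{prop:accepted-block-green-barrier} needs $W_{f(t)}=1$ on the \emph{whole} block $\mathcal B_j$. That means $M_{f(t)}(x,r)>\varepsilon_{\mathrm{pkt}}/2$ \emph{and} $\rho_{f(t)}(x,r)>\rho_+$ for every $(x,r)\in\mathcal B_j$. If $\Gamma_2$ switches off part of the block, its $\Lambda_t$-mass drops below $a_0$, and the exponent $\gamma=\kappa a_0/\pi$ is lost.

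Your bootstrap quantity, ``mass $\ge m_{\mathrm{sph}}/2$ stays in $B_R$'', does not imply this. Backward in time the energy may stay in $B_R$ while collapsing, inside the cutoff, to a scale much smaller than $r_j$. The variance ratio at scale $r_j$ is then small and the block is rejected. So the claim ``on $[t_*,T_{\max})$ the packet lower bound holds'' is unjustified.

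The paper instead runs the continuity argument on the set of times at which the \emph{entire} block is accepted (Proposition~\ref{prop:uniform-backward-persistence}). It starts from margins $4\delta_M,4\delta_\rho$ at $v_j$ and shows they degrade at most to $2\delta_M,2\delta_\rho$. This requires transporting, besides $\chi_{x,r}$, the second-moment tests $\psi_{b,x,r}=\chi_{x,r}\,d_h(\cdot,b)^2/r^2$, uniformly in the frozen center $b$. One then compares $\rho^2=\min_bJ(b)/M$, which gives \eqref{eq:accepted-packet-moment-transport}.

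\textbf{Why your one-sided inequality is not enough.} The inequality $\frac{d}{dt}\int\phi^2e\le C\int a|d\phi|^2e$ bounds earlier masses only from below. Keeping $\rho^2=J/M$ above $\rho_+^2$ backward also needs an \emph{upper} bound on the earlier $M$. Going backward, the mass can exceed the later value by the dissipated energy. The two-sided bound \eqref{eq:universal-localized-transport} brings in the tail $\mathscr D(s,T)=\omega_E(s)\to0$, and together with the barrier it yields \eqref{eq:accepted-block-transport-estimate}. At $\gamma=2$ the smallness comes from this tail, or from $T_{\max}-t_0\to0$ in your own bound. The strict inequalities \eqref{eq:finite-time-packet-compatibility} are not used to absorb constants there; they only make the sphere block accepted with margins.

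\textbf{Step~1 also passes over a real difficulty.} The energy law controls only the weighted quantity $\iint a|\tau_h|^2$. Freezing $a$ at the center does not control its spatial oscillation at the concentration scale. Lemma~\ref{lem:scale-normalized-small-energy} got that control only from small energy at the relevant scale, which fails at a concentration point. The paper obtains the needed mobility comparison by a first-crossing selection:
\begin{itemize}
\item on $[s_j,\bar t_j]$ every packet of scale $\le R_j$ is inactive, so $|\nabla\phi|\le\kappa C_KC_W/R_j$;
\item hence $a$ varies by at most the factor $H_j$ on $B_{L_j\varrho_j}(\bar x_j)$;
\item averaging over a window of effective length one, with $H_j\omega_E(s_j)\to0$, then gives \eqref{eq:first-crossing-small-rescaled-tension}.
\end{itemize}
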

The energy identity~\eqref{eq:short-time-energy-law} gives
$
 \mathscr D(s,t):=E(s)-E(t)
 =\int_s^t\int_\Sigma a_q|\tau_h(f(q))|^2\,dA_h\,dq.
$
For a time-independent smooth test $\psi$, put
$I_\psi(t)=\int_\Sigma\psi\,d\mu_t$.  Differentiation and integration
by parts yield
\begin{equation}
 \frac{d}{dt}I_\psi(t)
 =-\int_\Sigma\psi a_t|\tau_h(f)|^2\,dA_h
  -\int_\Sigma
  a_t\langle df,\nabla\psi\otimes\tau_h(f)\rangle_h\,dA_h.
 \label{eq:signed-localized-energy-identity}
\end{equation}
Hence, weighted Cauchy--Schwarz gives
\begin{equation}
	 |I_\psi(t)-I_\psi(s)|
 \leq\|\psi\|_\infty\mathscr D(s,t)
 +\mathscr D(s,t)^{1/2}
 \left(\int_s^t\int_\Sigma
 a_q|df|^2|\nabla\psi|^2\,dA_h\,dq\right)^{1/2}.
 \label{eq:universal-localized-transport}
\end{equation}

\subsection{Harmonic packet detection}
\label{subsec:first-concentration-scales}
\label{subsec:almost-harmonic-packet-detection}
A first-crossing argument selects times and scales with positive
localized energy and vanishing rescaled tension.  The resulting
harmonic profile then provides a whole block of accepted PW packets.
\begin{lemma}
\label{lem:terminal-harmonic-profile}
Suppose $T=T_{\max}<\infty$ and
$\mathcal K(p,T)\geq\varepsilon_{\mathrm{cont}}$ at some $p\in\Sigma$.
Then there are times $v_j\uparrow T$, centers $p_j\in\Sigma$, scales
$\lambda_j\downarrow0$, and a nonconstant finite-energy harmonic map
$\omega:\mathbb R^2\to N$ such that
\begin{equation}
 f\bigl(v_j,\exp_{p_j}(\lambda_jz)\bigr)
 \longrightarrow\omega(z)
 \quad\text{strongly in }W^{1,2}_{\mathrm{loc}}(\mathbb R^2).
 \label{eq:terminal-harmonic-profile-convergence}
\end{equation}
In particular, $E(\omega)\leq E_0$ and $\omega$ extends to a harmonic
sphere.
\end{lemma}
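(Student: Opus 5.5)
We follow the classical Struwe blow-up argument, adapted to the variable coefficient $a=e^{-2\Phi(f)}$ through the time change used in Lemma~\ref{lem:scale-normalized-small-energy}. The plan is to select times at which the weighted tension is small, choose a first-crossing scale at which the local energy first reaches a fixed fraction of $\varepsilon_{\mathrm{cont}}$, and show that the rescaled maps converge to a nonconstant harmonic map.

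\emph{Step 1: time selection.} By the energy identity \eqref{eq:short-time-energy-law}, $\mathscr D(t,T)\to0$ as $t\uparrow T$, and $\int_{t}^{T}\int_\Sigma a|\tau_h(f)|^2\,dA_h\,dq<\infty$. Using $\mathcal K(p,T)\ge\varepsilon_{\mathrm{cont}}$, we fix radii $R_j\downarrow0$ and times $t_j\uparrow T$ with $E_h(f(t_j);B_{R_j}(p))\ge\varepsilon_{\mathrm{cont}}/2$. Since $\mathscr D(t_j,T)\to0$, the transport estimate \eqref{eq:universal-localized-transport}, applied with a cutoff $\psi$ adapted to $B_{R_j}(p)$ and a time window of length $\ll R_j^2\lambda$, keeps the localized energy above $\varepsilon_{\mathrm{cont}}/4$ across a short interval $[t_j,t_j+\delta_j]$. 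Here $\lambda$ is a lower bound for $a$ on the relevant window. This lower bound is the delicate point, since $a$ has no uniform bound as $r\to0$; we handle it by working in the rescaled time $\theta$. By a mean-value choice inside this interval, we pick $v_j$ with
\[
 \delta_j\int_\Sigma a|\tau_h(f(v_j))|^2\,dA_h\le\mathscr D(t_j,T)\to0.
\]

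\emph{Step 2: scale selection (first crossing).} At time $v_j$ we define $\lambda_j$ as the smallest $r$ with $\sup_x E_h(f(v_j);B_{r}(x))\ge\varepsilon_{\mathrm{reg}}/8$, attained at a center $p_j$. Smoothness at $v_j<T$ makes $\lambda_j>0$. If $\lambda_j$ did not tend to zero, the small-energy hypothesis of Lemma~\ref{lem:scale-normalized-small-energy} would hold at a fixed scale; transporting it backward via \eqref{eq:universal-localized-transport} would give a time window ending at $T$ on which the hypothesis holds, so the Lemma would give bounds up to $T$, contradicting maximality. Hence $\lambda_j\downarrow0$.

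\emph{Step 3: convergence.} We set $\omega_j(z)=f(v_j,\exp_{p_j}(\lambda_jz))$. By the first-crossing choice, every unit ball carries energy at most $\varepsilon_{\mathrm{reg}}/8$, while $B_1(0)$ carries exactly that amount. The rescaled tension is $\lambda_j^2\tau_h$. Because $a$ is comparable to $a(p_j,v_j)$ on $B_{4\lambda_j}(p_j)$ with constants independent of scale (the ratio bounds in Lemma~\ref{lem:scale-normalized-small-energy}), the selection in Step 1 should give $\|\tau(\omega_j)\|_{L^2(B_R)}\to0$, provided $\delta_j$ is measured in rescaled time, i.e.\ $\delta_j a(p_j,v_j)\gtrsim\lambda_j^2$. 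This coupling is the main obstacle: Steps 1 and 2 must be performed simultaneously in the time variable $\theta$, choosing $v_j$ after $\lambda_j$ is known at a nearby slice. We would first try to choose $\lambda_j$ at time $t_j$, then use the transport estimate to keep the scale essentially frozen over $\theta$-intervals of unit length, and only then select $v_j$. With small energy on unit balls and $L^2$ tension tending to zero, the standard $\varepsilon$-regularity for approximate harmonic maps (Ding--Tian / Struwe) gives $W^{2,2}_{\mathrm{loc}}$ bounds. These yield strong $W^{1,2}_{\mathrm{loc}}$ convergence to a harmonic $\omega$ with $E(\omega;B_1)=\varepsilon_{\mathrm{reg}}/8>0$, so $\omega$ is nonconstant, as required in \eqref{eq:terminal-harmonic-profile-convergence}.

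\emph{Step 4: conclusion.} Lower semicontinuity and $E_h(f(t))\le E_0$ give $E(\omega)\le E_0$. Finite energy and the removable singularity theorem \cite{SacksUhlenbeck1981} then extend $\omega$ to a harmonic sphere.
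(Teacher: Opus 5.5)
Your proposal has a genuine gap, and it is exactly the coupling you flag in Step~3 but leave unresolved. To make $\|\tau(\omega_j)\|_{L^2(B_R)}^2=\lambda_j^2\int_{B_{R\lambda_j}(p_j)}|\tau_h|^2\,dA_h$ small by averaging the dissipation, you need a time window $J\subset[t_j,T)$ with two properties. First, $a$ must be comparable on $B_{R\lambda_j}(p_j)$ to a single clock $\alpha(t)$. Second, the clock length must satisfy $\int_J\alpha\,dt\gtrsim\lambda_j^2$. Your Step~1 window is fixed from $R_j$ before $\lambda_j$ is known, so the second requirement is never checked.

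The roles of the bounds on $a$ are also reversed in Step~1. The transport estimate \eqref{eq:universal-localized-transport} needs only an \emph{upper} bound on $a$, and that bound is uniform: $K_q\ge-C_G$ gives $\phi_t\ge-\kappa C_GC_W$. This is also why your Step~2 works. The \emph{lower} bound is what converts small weighted dissipation into small rescaled tension. It is precisely the bound that degenerates, since the Green potential drives $a$ toward zero at concentrating packets.

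Your fallback is to fix $\lambda_j$ at $t_j$ and run forward over a unit $\theta$-interval. This can keep the scale-$\lambda_j$ masses near $p_j$ essentially frozen while the clock stays below one. But nothing in the argument guarantees $\int_{t_j}^{T}\sup_{B_{R\lambda_j}(p_j)}a\,dt\ge\lambda_j^2$. If the clock does not reach unit length before $T$, you obtain neither a selected time $v_j$ nor a contradiction.

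The missing idea is a first crossing in \emph{time} as well as in scale, with the averaging window chosen \emph{backward} from the crossing time. The paper fixes $m_{\mathrm{fc}}<\varepsilon_{\mathrm{pkt}}/4$ and takes $s_j$ with $\sup_{x,\,r\le R_j}M_{f(s_j)}(x,r)<m_{\mathrm{fc}}/4$. It lets $\bar t_j$ be the first later time at which this supremum equals $m_{\mathrm{fc}}$, attained at $(\bar x_j,\varrho_j)$. Every packet of scale at most $R_j$ is then inactive on all of $[s_j,\bar t_j]$ by construction, with no transport of fine-scale masses needed. Hence $|\nabla\phi|\le\kappa C_KC_W/R_j$ there, and $a$ is comparable on $B_{L_j\varrho_j}(\bar x_j)$ up to the factor $H_j=e^{4\kappa C_KC_WL_j}$. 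This factor is absorbed by choosing $s_j$ with $H_j\bigl(E(s_j)-E(T^-)\bigr)\to0$.

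The mass at $(\bar x_j,\varrho_j)$ rises from below $m_{\mathrm{fc}}/4$ to $m_{\mathrm{fc}}$. The transport estimate itself therefore forces $\varrho_j^{-2}\int_{s_j}^{\bar t_j}\sup_{B_{L_j\varrho_j}(\bar x_j)}a\,dq>1$. So a window $[q_j,\bar t_j]$ of unit clock length exists automatically before $T$. The mass survives on it, and averaging yields $v_j$ with small rescaled tension on $B_{L_j\varrho_j}(\bar x_j)$.

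The price is that at $v_j$ you no longer know that every unit ball carries small energy. So instead of your direct $\varepsilon$-regularity argument, the paper applies Ding--Tian/Qing--Tian compactness for approximate harmonic maps and takes a terminal nonconstant bubble component as $\omega$.
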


\begin{proof}
By hypothesis,
\begin{equation*}
 \mathcal K(p,T)\geq\varepsilon_{\mathrm{cont}}.
\end{equation*}
Since $E$ is nonincreasing, the
limit $E(T^-):=\lim_{t\uparrow T}E(t)$ exists.  Set
\begin{equation*}
 \omega_E(s):=E(s)-E(T^-).
\end{equation*}
Then $\omega_E(s)\downarrow0$ as $s\uparrow T$.

Define the first-crossing mass
\begin{equation*}
 m_{\mathrm{fc}}
 :=\frac18\min\{\varepsilon_{\mathrm{cont}},
                 \varepsilon_{\mathrm{pkt}}\}.
\end{equation*}
In particular,
$0<m_{\mathrm{fc}}<\varepsilon_{\mathrm{pkt}}/4$ and
$m_{\mathrm{fc}}<\mathcal K(p,T)$.

Write $C_K=C_1$ for the first-derivative constant in
\eqref{eq:green-heat-kernel-derivative-bound}; $C_W$ is fixed in
\eqref{eq:total-packet-weight-bound}.
Choose any sequence $L_j\uparrow\infty$, with $L_j\geq4$, and put
\begin{equation}
 H_j:=\exp(4\kappa C_KC_WL_j).
 \label{eq:first-crossing-comparison-factor}
\end{equation}
Choose $s_j\uparrow T$ sufficiently rapidly that
\begin{equation*}
 H_j\omega_E(s_j)\longrightarrow0.
\end{equation*}
Since $f(s_j)$ is smooth, there are $R_j\downarrow0$ such that
\begin{equation}
 L_jR_j\longrightarrow0,
 \qquad
 F_j(s_j):=\sup_{x\in\Sigma,\ 0\leq r\leq R_j}
 M_{f(s_j)}(x,r)<\frac{m_{\mathrm{fc}}}{4}.
 \label{eq:first-crossing-starting-scale}
\end{equation}
Here $M_f(x,0):=0$.  The estimate is uniform in $x$ because
$M_{f(s_j)}(x,r)\leq C\|df(s_j)\|_\infty^2r^2$.

For every fixed $R_j$, the definition of $\mathcal K(p,T)$ gives a
later time at which
$E_h(f(t);B_{R_j}(p))>m_{\mathrm{fc}}$.  Since
$\chi_{p,R_j}=1$ on $B_{R_j}(p)$, one has $F_j(t)>m_{\mathrm{fc}}$ at
such a time.  On a compact time interval contained in $[0,T)$, the map
$(t,x,r)\mapsto M_{f(t)}(x,r)$ is continuous, including at $r=0$.
Consequently $F_j$ is continuous and attains its supremum.  Let
$\bar t_j>s_j$ be the first time for which
\begin{equation*}
 F_j(\bar t_j)=m_{\mathrm{fc}},
\end{equation*}
and choose $(\bar x_j,\varrho_j)$, with
$0<\varrho_j\leq R_j$, such that
\begin{equation}
 M_{f(\bar t_j)}(\bar x_j,\varrho_j)=m_{\mathrm{fc}}.
 \label{eq:first-crossing-packet}
\end{equation}
For all $q\in[s_j,\bar t_j]$, all $x\in\Sigma$, and all
$0<r\leq R_j$,
\begin{equation*}
 M_{f(q)}(x,r)\leq m_{\mathrm{fc}}
 <\frac{\varepsilon_{\mathrm{pkt}}}{4}.
\end{equation*}
The flat part of $\Gamma_1$ therefore gives
\begin{equation*}
 W_{f(q)}(x,r)=0
 \qquad
 (0<r\leq R_j,\ s_j\leq q\leq\bar t_j).
\end{equation*}
This is the first-crossing input: all finer source scales are inactive
on the entire interval, not merely at its terminal time.

The Green representation
\eqref{eq:potential-as-parameter-integral}, the total-weight bound,
and \eqref{eq:green-heat-kernel-derivative-bound} imply
\begin{equation}
 \|\nabla\phi_q\|_{L^\infty(\Sigma)}
 \leq\frac{\kappa C_KC_W}{R_j}
 \qquad(s_j\leq q\leq\bar t_j).
 \label{eq:first-crossing-potential-gradient}
\end{equation}
Let
\begin{equation*}
 \mathcal O_j:=B_{L_j\varrho_j}(\bar x_j),
 \qquad
 \alpha_j(q):=\sup_{\mathcal O_j}a_q.
\end{equation*}
Since $\log a_q=-2\phi_q$, equations
\eqref{eq:first-crossing-comparison-factor} and
\eqref{eq:first-crossing-potential-gradient} give
\begin{equation}
 H_j^{-1}\alpha_j(q)\leq a_q(y)\leq\alpha_j(q)
 \qquad(y\in\mathcal O_j).
 \label{eq:first-crossing-mobility-comparison}
\end{equation}

For $J=[u,v]\subset[s_j,\bar t_j]$, define its packet-scale effective
length by
\begin{equation*}
 \ell_j(J):=\frac1{\varrho_j^2}\int_u^v\alpha_j(q)\,dq.
\end{equation*}
Apply \eqref{eq:universal-localized-transport} to the fixed test
$\chi_{\bar x_j,\varrho_j}$.  Its derivative support lies in
$\mathcal O_j$, $|\nabla\chi_{\bar x_j,\varrho_j}|\leq C/\varrho_j$,
and $\int_\Sigma|df(q)|^2\,dA_h\leq2E_0$.  Hence
\begin{align}
 &\left|M_{f(v)}(\bar x_j,\varrho_j)
       -M_{f(u)}(\bar x_j,\varrho_j)\right|\notag\\
 &\hspace{18mm}\leq
 \mathscr D(u,v)
 +C\sqrt{E_0\mathscr D(u,v)\ell_j(J)}.
 \label{eq:first-crossing-effective-transport}
\end{align}

If $\ell_j([s_j,\bar t_j])\leq1$ along a subsequence, then
\eqref{eq:first-crossing-starting-scale},
\eqref{eq:first-crossing-packet}, and
\eqref{eq:first-crossing-effective-transport} would give
\[
 \frac{3m_{\mathrm{fc}}}{4}
 \leq\omega_E(s_j)+C\sqrt{E_0\omega_E(s_j)}=o(1),
\]
a contradiction.  Thus $\ell_j([s_j,\bar t_j])>1$ for all large $j$.
Because $\alpha_j(q)>0$, there is a unique
$q_j\in(s_j,\bar t_j)$ satisfying
\begin{equation*}
 \ell_j([q_j,\bar t_j])=1.
\end{equation*}
Equation \eqref{eq:first-crossing-effective-transport}, applied
between any $q\in[q_j,\bar t_j]$ and $\bar t_j$, now yields
\begin{equation}
 M_{f(q)}(\bar x_j,\varrho_j)
 =m_{\mathrm{fc}}+o(1)
 \geq\frac{m_{\mathrm{fc}}}{2}.
 \label{eq:first-crossing-mass-survival}
\end{equation}

Using the lower inequality in
\eqref{eq:first-crossing-mobility-comparison}, we obtain
\begin{align*}
 \omega_E(s_j)
 &\geq\int_{q_j}^{\bar t_j}\int_{\mathcal O_j}
             a_q|\tau_h(f(q))|^2\,dA_h\,dq\\
 &\geq H_j^{-1}\int_{q_j}^{\bar t_j}\alpha_j(q)
       \int_{\mathcal O_j}|\tau_h(f(q))|^2\,dA_h\,dq.
\end{align*}
The measure
$\varrho_j^{-2}\alpha_j(q)dq$ has total mass one on
$[q_j,\bar t_j]$.  Averaging therefore gives a time
$v_j\in[q_j,\bar t_j]$ such that
\begin{equation}
 \varrho_j^2
 \int_{B_{L_j\varrho_j}(\bar x_j)}
 |\tau_h(f(v_j))|^2\,dA_h
 \leq H_j\omega_E(s_j)\longrightarrow0.
 \label{eq:first-crossing-small-rescaled-tension}
\end{equation}
Together with \eqref{eq:first-crossing-mass-survival}, this proves
\begin{equation}
 M_{f(v_j)}(\bar x_j,\varrho_j)
 \geq\frac{m_{\mathrm{fc}}}{2},
 \qquad
 L_j\varrho_j\longrightarrow0,
 \qquad v_j\longrightarrow T.
 \label{eq:first-crossing-selected-data}
\end{equation}
After passing to a subsequence we may suppose $v_j\uparrow T$.

In exponential coordinates define
\begin{equation*}
 u_j(z):=f\bigl(v_j,\exp_{\bar x_j}(\varrho_jz)\bigr),
 \qquad z\in B_{L_j}(0),
\end{equation*}
and let $g_j$ be the corresponding rescaled domain metric.  Then
$g_j\to g_{\mathbb R^2}$ smoothly on compact sets.  Equations
\eqref{eq:first-crossing-small-rescaled-tension} and
\eqref{eq:first-crossing-selected-data} give
\begin{equation}
 \sup_jE_{g_j}(u_j;B_{L_j})\leq E_0,
 \qquad
 E_{g_j}(u_j;B_2)\geq\frac{m_{\mathrm{fc}}}{2},
 \qquad
 \|\tau_{g_j}(u_j)\|_{L^2(B_{L_j},g_j)}\longrightarrow0.
 \label{eq:almost-harmonic-input-data}
\end{equation}

Apply local approximate-harmonic compactness and the energy identity
to \eqref{eq:almost-harmonic-input-data}
\cite{DingTian1995,QingTian1997}; see also
\cite[Theorem~1.1 and Section~2]{WangWeiZhang2017}.
The interior estimates are uniform under the smooth convergence
of $g_j$, and diagonal extraction gives a harmonic base component
on $\mathbb R^2$, with finitely many concentration points.
If the base is constant, the positive energy retained in $B_2$
produces a nonconstant sphere; otherwise the base itself is
nonconstant.

Finite-energy removability and the harmonic-sphere energy gap
\cite{SacksUhlenbeck1981} make the energetic descendant tree finite.
Choose a terminal nonconstant component $\omega$.  It has no
remaining concentration points, so convergence in its chart is
strong in $W^{1,2}_{\mathrm{loc}}(\mathbb R^2)$.
Returning to physical coordinates gives
\eqref{eq:terminal-harmonic-profile-convergence} with
$\lambda_j=O(\varrho_j)\to0$ and $E(\omega)\le E_0$.
\end{proof}

Apply Lemma~\ref{lem:universal-sphere-packet} to the profile in
Lemma~\ref{lem:terminal-harmonic-profile} and transfer its robust block
through \eqref{eq:terminal-harmonic-profile-convergence}.  After
incorporating the fixed translation and dilation of the profile pair,
we obtain physical center-radius pairs $(x_j,r_j)$ with $r_j\downarrow0$.
For
\begin{equation}
 \mathcal B_j
 :=\left\{(x,r):
 d_h(x,x_j)\leq c_xr_j,
 \ |\log(r/r_j)|\leq c_s\right\},
 \label{eq:robust-physical-parameter-block}
\end{equation}
strong local $W^{1,2}$ convergence gives, uniformly on
$\mathcal B_j$,
\begin{equation*}
 M_{f(v_j)}(x,r)\geq\frac{m_{\mathrm{sph}}}{2},
 \qquad
 \rho_{f(v_j)}(x,r)\geq\frac{\rho_{\mathrm{sph}}}{2}
\end{equation*}
for all large $j$.  The uniformity follows because on a compact
relative parameter block the cutoff mass and its first and second
moments converge uniformly; uniform strict convexity of the rescaled
Karcher functionals then gives convergence of their minimizers.

Define the accepted set at time $t$ by
\begin{equation*}
 \operatorname{Acc}(t)
 :=\left\{(x,r):
 M_{f(t)}(x,r)>\frac{\varepsilon_{\mathrm{pkt}}}{2},
 \quad \rho_{f(t)}(x,r)>\rho_+\right\}.
\end{equation*}
Every pair in $\operatorname{Acc}(t)$ has $W_{f(t)}(x,r)=1$.
Fix constants 
$\delta_M,\delta_\rho>0$ such that
\begin{equation}
 M_{f(v_j)}(x,r)
 \geq\frac{\varepsilon_{\mathrm{pkt}}}{2}+4\delta_M,
 \qquad
 \rho_{f(v_j)}(x,r)\geq\rho_++4\delta_\rho
 \label{eq:robust-block-strict-margins}
\end{equation}
throughout $\mathcal B_j$.  Thus
$\mathcal B_j\subset\operatorname{Acc}(v_j)$ with strict, uniform
margins.

For later use, choose one fixed $L_\Omega>0$ so that the supports of
all packet cutoffs and their first derivatives for pairs in
$\mathcal B_j$ lie in
$
 \Omega_j:=B_{L_\Omega r_j}(x_j).
$

\subsection{The Green barrier, no-flux, and backward persistence}
\label{subsec:green-potential-barrier}

Recall the parameter measure $\Lambda_t$ from
\eqref{eq:packet-parameter-measure}.
By Lemma~\ref{lem:total-packet-weight},
\begin{equation}
 \Lambda_t(\Sigma\times(0,r_*))\leq C_W
 \label{eq:finite-time-total-source-bound}
\end{equation}
uniformly in $t$.  In terms of the regularized Green kernel from
\eqref{eq:heat-regularized-green-kernel}, the reconstruction formula is
\begin{equation}
 \phi_t(y)
 =\kappa\int_{\Sigma\times(0,r_*)}
 K_r\bigl(y,b_{f(t)}(p,r)\bigr)\,d\Lambda_t(p,r).
 \label{eq:finite-time-green-parameter-reconstruction}
\end{equation}

We shall use the standard estimates
\begin{align}
 &G_h(y,z)\geq\frac1{2\pi}\log\frac1{d_h(y,z)}-C_G,
 G_h(y,z)\geq-C_G,
 \label{eq:green-kernel-lower-bounds}\\
 &\int_\Sigma d_h(u,z)H_{r^2}(u,z)\,dA_h(u)\leq C_Hr.
 \label{eq:heat-kernel-first-moment}
\end{align}

\begin{proposition}
\label{prop:accepted-block-green-barrier}
There is a constant $C_{\mathrm{wall}}$, independent of $j$ and $t$,
such that, for all sufficiently large $j$,
\begin{equation}
 \mathcal B_j\subset\operatorname{Acc}(t)
 \quad\Longrightarrow\quad
 \sup_{y\in\Omega_j}a_t(y)
 \leq C_{\mathrm{wall}}r_j^\gamma.
 \label{eq:accepted-block-green-barrier}
\end{equation}
\end{proposition}

\begin{proof}
If $\mathcal B_j\subset\operatorname{Acc}(t)$, then $W_{f(t)}=1$ on
the entire block.  Since
$r_j e^{-c_s}\leq r\leq r_je^{c_s}$ there, equations
\eqref{eq:packet-volume-comparison} and
\eqref{eq:robust-physical-parameter-block} give
\begin{align}
 \Lambda_t(\mathcal B_j)
 &=\int_{\mathcal B_j}\frac{dA_h(p)}{r^2}\frac{dr}{r}\notag\\
 &\geq2c_se^{-2c_s}r_j^{-2}
       \operatorname{Area}_h(B_{c_xr_j}(x_j))
 \geq a_0.
 \label{eq:accepted-block-source-mass}
\end{align}

If $y\in\Omega_j$ and $(p,r)\in\mathcal B_j$, then the barycenter
location in Proposition~\ref{prop:packet-barycenter} gives
\begin{equation*}
 d_h\bigl(b_{f(t)}(p,r),x_j\bigr)\leq Cr_j.
\end{equation*}
Convolving the first inequality in
\eqref{eq:green-kernel-lower-bounds} against
$H_{r^2}(\,\cdot\,,b_{f(t)}(p,r))$, using
\eqref{eq:heat-kernel-first-moment}, and applying Jensen's inequality
to the convex function $-\log$ yields
\begin{equation}
 K_r\bigl(y,b_{f(t)}(p,r)\bigr)
 \geq\frac1{2\pi}\log\frac1{r_j}-C
 \qquad(y\in\Omega_j,\ (p,r)\in\mathcal B_j).
 \label{eq:regularized-green-block-lower-bound}
\end{equation}
The second inequality in \eqref{eq:green-kernel-lower-bounds} gives
$K_r\geq-C_G$ everywhere.  Split
\eqref{eq:finite-time-green-parameter-reconstruction} into the block
and its complement.  Using
\eqref{eq:finite-time-total-source-bound},
\eqref{eq:accepted-block-source-mass}, and
\eqref{eq:regularized-green-block-lower-bound}, we find
\begin{equation*}
 \phi_t(y)
 \geq\frac{\kappa a_0}{2\pi}\log\frac1{r_j}-C
 \qquad(y\in\Omega_j).
\end{equation*}
Since $a_t=e^{-2\phi_t}$, exponentiation and
\eqref{eq:finite-time-gamma} prove
\eqref{eq:accepted-block-green-barrier}.
\end{proof}

The barrier is conditional on acceptance.  We next control transport
on such an interval and then establish backward persistence.

\begin{lemma}
\label{lem:accepted-block-transport}
Suppose
\begin{equation*}
 \mathcal B_j\subset\operatorname{Acc}(q)
 \qquad(t_1\leq q\leq t_2).
\end{equation*}
Let $\psi_{j,x,r}$ be any time-independent test function, indexed by
$(x,r)\in\mathcal B_j$, satisfying
\begin{equation}
 \|\psi_{j,x,r}\|_\infty\leq C,
 \qquad
 \|\nabla\psi_{j,x,r}\|_\infty\leq Cr_j^{-1},
 \qquad
 \operatorname{supp}\nabla\psi_{j,x,r}\subset\Omega_j.
 \label{eq:accepted-test-function-bounds}
\end{equation}
Then, uniformly for $(x,r)\in\mathcal B_j$,
\begin{equation}
	 |I_{\psi_{j,x,r}}(t_2)-I_{\psi_{j,x,r}}(t_1)|
 \leq C\mathscr D(t_1,t_2)
 +C\sqrt{E_0(t_2-t_1)\mathscr D(t_1,t_2)}
       r_j^{(\gamma-2)/2}.
 \label{eq:accepted-block-transport-estimate}
\end{equation}

Moreover, the signed flux satisfies
\begin{equation}
	\left|\int_{t_1}^{t_2}\int_\Sigma
 a_q\langle df,\nabla\psi_{j,x,r}\otimes\tau_h(f)\rangle_h
 \,dA_h\,dq\right|
 \leq
 C\sqrt{E_0(t_2-t_1)\mathscr D(t_1,t_2)}
 r_j^{(\gamma-2)/2}.
 \label{eq:accepted-block-no-flux}
\end{equation}
 
\end{lemma}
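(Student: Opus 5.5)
We start from the signed localized energy identity \eqref{eq:signed-localized-energy-identity} with $\psi=\psi_{j,x,r}$, integrated over $[t_1,t_2]$. This gives
\[
 I_\psi(t_2)-I_\psi(t_1)
 =-\int_{t_1}^{t_2}\!\!\int_\Sigma\psi a_q|\tau_h(f)|^2\,dA_h\,dq
 -\int_{t_1}^{t_2}\!\!\int_\Sigma a_q\langle df,\nabla\psi\otimes\tau_h(f)\rangle_h\,dA_h\,dq .
\]
The first term is at most $\|\psi\|_\infty\mathscr D(t_1,t_2)\le C\mathscr D(t_1,t_2)$ by the definition of $\mathscr D$ and the first bound in \eqref{eq:accepted-test-function-bounds}. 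Hence \eqref{eq:accepted-block-transport-estimate} follows from the flux bound \eqref{eq:accepted-block-no-flux}, and it suffices to prove the latter.

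For the flux term we use the weighted Cauchy--Schwarz inequality, splitting $a_q=a_q^{1/2}\cdot a_q^{1/2}$. The flux is then at most
\[
 \Bigl(\int_{t_1}^{t_2}\!\!\int_\Sigma a_q|\tau_h(f)|^2\Bigr)^{1/2}
 \Bigl(\int_{t_1}^{t_2}\!\!\int_{\Omega_j} a_q|df|^2|\nabla\psi|^2\Bigr)^{1/2}.
\]
Here the spatial integration in the second factor is restricted to $\Omega_j$ because $\operatorname{supp}\nabla\psi\subset\Omega_j$. The first factor equals $\mathscr D(t_1,t_2)^{1/2}$. In the second factor we insert the gradient bound $|\nabla\psi|\le Cr_j^{-1}$ and the energy bound $\int_\Sigma|df(q)|^2\,dA_h\le 2E_0$, which holds by monotonicity of the energy. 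The key step is to bound $a_q$ on $\Omega_j$ pointwise in time. Since $\mathcal B_j\subset\operatorname{Acc}(q)$ for every $q\in[t_1,t_2]$, the Green barrier, Proposition~\ref{prop:accepted-block-green-barrier}, applies at each such time and gives $\sup_{\Omega_j}a_q\le C_{\mathrm{wall}}r_j^\gamma$, for $j$ large and uniformly in $q$. The second factor is therefore bounded by
\[
 \Bigl(C_{\mathrm{wall}}r_j^\gamma\cdot C r_j^{-2}\cdot 2E_0\,(t_2-t_1)\Bigr)^{1/2}
 = C\sqrt{E_0(t_2-t_1)}\,r_j^{(\gamma-2)/2}.
\]
Multiplying by $\mathscr D^{1/2}$ yields \eqref{eq:accepted-block-no-flux}. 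Since all constants depend only on $C_{\mathrm{wall}}$, $E_0$ and the constants in \eqref{eq:accepted-test-function-bounds}, the estimate is uniform in $(x,r)\in\mathcal B_j$.

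The only delicate point is the hypothesis of the barrier. Proposition~\ref{prop:accepted-block-green-barrier} is conditional on acceptance of the whole block at the given time. The assumption of the lemma supplies exactly this on the entire interval $[t_1,t_2]$, so the barrier holds for almost every $q$, which suffices inside the time integral. We also need the threshold on $j$ in the proposition to be independent of $t$; this holds since its constant $C_{\mathrm{wall}}$ is independent of $j$ and $t$. The weighted Cauchy--Schwarz step is what places the full factor $a_q$ (rather than $a_q^{1/2}$) against $|df|^2|\nabla\psi|^2$, producing the exponent $(\gamma-2)/2$. That exponent is nonnegative precisely because $\gamma\ge2$, which is where the largeness of $\kappa$ enters later.
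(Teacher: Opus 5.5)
Your proof is correct and follows the paper's argument. You integrate the localized energy identity, bound the tension term by $\|\psi\|_\infty\mathscr D(t_1,t_2)$, and apply weighted Cauchy--Schwarz to the flux, invoking the Green barrier of Proposition~\ref{prop:accepted-block-green-barrier} at each time of the accepted interval. The only differences are minor: the paper reads off the transport estimate from the integrated inequality \eqref{eq:universal-localized-transport} before treating the flux, whereas you prove the flux bound first, and the barrier in fact holds for every $q\in[t_1,t_2]$, not merely almost every.
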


\begin{proof}
The accepted-interval hypothesis permits us to apply
Proposition~\ref{prop:accepted-block-green-barrier} at every time in
$[t_1,t_2]$.  On the derivative support of the test function,
\begin{equation*}
 a_q|\nabla\psi_{j,x,r}|^2\leq Cr_j^{\gamma-2}.
\end{equation*}
Since $\int_\Sigma|df(q)|^2\,dA_h\leq2E_0$, the second factor in
\eqref{eq:universal-localized-transport} is bounded by
$C(E_0(t_2-t_1)r_j^{\gamma-2})^{1/2}$.  This proves
\eqref{eq:accepted-block-transport-estimate}.  Applying the same
weighted Cauchy--Schwarz estimate directly to the second term in
\eqref{eq:signed-localized-energy-identity} gives
\eqref{eq:accepted-block-no-flux}.
\end{proof}

For $s<T$ define
\begin{equation*}
 \varepsilon_{\mathrm{tr}}(s)
 :=C\omega_E(s)+C\sqrt{E_0T\omega_E(s)}.
\end{equation*}
If $\gamma\geq2$ and $r_j<1$, then on every accepted interval
$[t_1,t_2]\subset[s,T)$,
\begin{equation}
 |I_{\psi_{j,x,r}}(t_2)-I_{\psi_{j,x,r}}(t_1)|
 \leq\varepsilon_{\mathrm{tr}}(s),
 \qquad
 \varepsilon_{\mathrm{tr}}(s)\longrightarrow0
 \quad(s\uparrow T).
 \label{eq:critical-terminal-transport}
\end{equation}
At the critical exponent $\gamma=2$, no positive power of $r_j$ is
available; the vanishing dissipation tail supplies exactly the required
smallness.

To preserve packet acceptance, we control the variance ratio as well
as the localized mass using \eqref{eq:critical-terminal-transport}.
Fix $(x,r)\in\mathcal B_j$ and an accepted interval
$[t_1,t_2]\subset[s,T)$.  Write
$M_i=M_{f(t_i)}(x,r)$ and $\rho_i=\rho_{f(t_i)}(x,r)$.
For each fixed $b\in\overline B_{2r}(x)$, the test
\[
 \psi_{b,x,r}(y):=\frac{\chi_{x,r}(y)}{r^2}d_h(y,b)^2
\]
satisfies \eqref{eq:accepted-test-function-bounds} uniformly in $b$:
on the cutoff support, $d_h(y,b)\le4r$, so
$\|\psi_{b,x,r}\|_\infty\le16$ and
$\|\nabla\psi_{b,x,r}\|_\infty\le Cr^{-1}\le Cr_j^{-1}$.
The gradient is supported in $\Omega_j$.
Define
\[
 J_i(b):=\int_\Sigma\psi_{b,x,r}\,d\mu_{t_i}.
\]
Applying \eqref{eq:critical-terminal-transport} to $\chi_{x,r}$
and $\psi_{b,x,r}$ gives
\[
 |M_1-M_2|+
 \sup_{b\in\overline B_{2r}(x)}|J_1(b)-J_2(b)|
 \le C\varepsilon_{\mathrm{tr}}(s).
\]
Here $b$ is held fixed during each time comparison.
Since $0\le J_i(b)\le16M_i$ and
$M_i\ge\varepsilon_{\mathrm{pkt}}/2$, normalization yields
\[
 \sup_{b\in\overline B_{2r}(x)}
 \left|\frac{J_1(b)}{M_1}-\frac{J_2(b)}{M_2}\right|
 \le C\varepsilon_{\mathrm{tr}}(s).
\]
By definition,
$\rho_i^2=\min_{b\in\overline B_{2r}(x)}J_i(b)/M_i$.
Comparing the minimum values and using $\rho_i\ge\rho_+$, we obtain
\[
 |\rho_1-\rho_2|
 =\frac{|\rho_1^2-\rho_2^2|}{\rho_1+\rho_2}
 \le C\varepsilon_{\mathrm{tr}}(s).
\]
Consequently,
\begin{align}
 &\sup_{(x,r)\in\mathcal B_j}
 \bigl(
 |M_{f(t_1)}(x,r)-M_{f(t_2)}(x,r)|
 +|\rho_{f(t_1)}(x,r)-\rho_{f(t_2)}(x,r)|
 \bigr)\notag\\
 &\hspace{26mm}\le C\varepsilon_{\mathrm{tr}}(s)
 \label{eq:accepted-packet-moment-transport}
\end{align}
on every accepted interval $[t_1,t_2]\subset[s,T)$.
Fix the constant $C$ in this estimate once and for all; it depends
only on the fixed packet geometry, $\varepsilon_{\mathrm{pkt}}$
and $\rho_+$, not on $j$, the pair $(x,r)$ or the times.

\begin{proposition}
\label{prop:uniform-backward-persistence}
There is one time $s_*<T$, independent of $j$, such that for every
sufficiently large $j$,
\begin{equation*}
 \mathcal B_j\subset\operatorname{Acc}(t)
 \qquad(s_*\leq t\leq v_j),
\end{equation*}
and, more precisely,
\begin{equation}
 M_{f(t)}(x,r)
 \geq\frac{\varepsilon_{\mathrm{pkt}}}{2}+\delta_M,
 \qquad
 \rho_{f(t)}(x,r)\geq\rho_++\delta_\rho
 \label{eq:backward-persistence-margins}
\end{equation}
for all $t\in[s_*,v_j]$ and $(x,r)\in\mathcal B_j$.
\end{proposition}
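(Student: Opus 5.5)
The plan is a continuity (bootstrap) argument run backward in time from $v_j$, using the transport estimate \eqref{eq:accepted-packet-moment-transport} on accepted intervals.

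First I fix $s_*<T$ independent of $j$. Let $C$ be the fixed constant in \eqref{eq:accepted-packet-moment-transport}. Since $\varepsilon_{\mathrm{tr}}(s)\to0$ as $s\uparrow T$, I choose $s_*$ with
\[
 C\varepsilon_{\mathrm{tr}}(s_*)<\min\{\delta_M,\delta_\rho\}.
\]
I then take $j$ large enough that $v_j>s_*$, that $r_j<1$ (so \eqref{eq:critical-terminal-transport} applies with $\gamma\ge2$), and that \eqref{eq:robust-block-strict-margins} holds at $v_j$. Because $\varepsilon_{\mathrm{tr}}$ is nonincreasing in $s$, the same bound holds on every accepted interval contained in $[s_*,T)$.

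Next comes the bootstrap. I define
\[
 t_j^*:=\inf\bigl\{t\in[s_*,v_j]: M_{f(q)}(x,r)>\tfrac{\varepsilon_{\mathrm{pkt}}}2+2\delta_M,\ \rho_{f(q)}(x,r)>\rho_++2\delta_\rho\ \text{for all }q\in[t,v_j],\ (x,r)\in\mathcal B_j\bigr\}.
\]
By \eqref{eq:robust-block-strict-margins} the defining set contains $v_j$. For the continuity in $q$ I use the smoothness of $f$ on $[s_*,v_j]\subset[0,T)$, together with the compactness of $\mathcal B_j$ and the continuity of $(t,x,r)\mapsto(M,\rho)$, which follows from Proposition~\ref{prop:packet-barycenter} on the mass-positive set. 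These show that the set is relatively open near $v_j$, so $t_j^*<v_j$.

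On $[t_j^*,v_j]$ the block satisfies $\mathcal B_j\subset\operatorname{Acc}(q)$, so this is an accepted interval. Applying \eqref{eq:accepted-packet-moment-transport} with $t_2=v_j$ and any $t_1=t\in[t_j^*,v_j]$ gives
\[
 M_{f(t)}\ge \tfrac{\varepsilon_{\mathrm{pkt}}}2+4\delta_M-C\varepsilon_{\mathrm{tr}}(s_*)>\tfrac{\varepsilon_{\mathrm{pkt}}}2+3\delta_M,
\]
and in the same way $\rho_{f(t)}>\rho_++3\delta_\rho$. These are strict improvements over the bootstrap thresholds $2\delta_M,2\delta_\rho$. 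If $t_j^*>s_*$, continuity would then extend the accepted interval below $t_j^*$, contradicting the definition of the infimum. Hence $t_j^*=s_*$, and the improved bounds give \eqref{eq:backward-persistence-margins} with margins of at least $3\delta_M,3\delta_\rho$, which is stronger than required.

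The main obstacle is to confirm that the constant in \eqref{eq:accepted-packet-moment-transport}, and the barrier of Proposition~\ref{prop:accepted-block-green-barrier} that feeds it, are uniform in $j$ and in the interval. The critical case $\gamma=2$ is where this matters: there no power of $r_j$ helps, so all smallness must come from $\omega_E(s_*)$. The factor $\sqrt{E_0T}$ in $\varepsilon_{\mathrm{tr}}$ absorbs the interval length independently of $j$, and the barrier requires only that $j$ be large; I would check that this largeness condition does not depend on $t$. A further technical point is the normalization step in the $\rho$ comparison, which needs $M\ge\varepsilon_{\mathrm{pkt}}/2$ on the whole interval. The bootstrap hypothesis supplies exactly that.
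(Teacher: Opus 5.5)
Your proposal is correct and follows essentially the same route as the paper. Both fix $s_*$ with $C\varepsilon_{\mathrm{tr}}(s_*)<\min\{\delta_M,\delta_\rho\}$ and then run a backward continuity argument from $v_j$, applying \eqref{eq:accepted-packet-moment-transport} on accepted intervals. The only difference is that you bootstrap the intermediate margins $2\delta_M,2\delta_\rho$ and improve them to $3\delta_M,3\delta_\rho$, whereas the paper bootstraps acceptance itself through the set $\mathfrak J_j$; this is a harmless variant, and it makes acceptance at the infimum follow directly from continuity.
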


\begin{proof}
Choose $s_*<T$ sufficiently close to $T$ that
\begin{equation}
 C\varepsilon_{\mathrm{tr}}(s_*)
 <\min\{\delta_M,\delta_\rho\}.
 \label{eq:backward-persistence-tail-choice}
\end{equation}
Increase $j$ so that $v_j>s_*$.  At $v_j$ the compact block has the
fourfold margins in \eqref{eq:robust-block-strict-margins}.  Ordinary
continuity of the packet mass and variance in time and in the
center-radius variables therefore gives a nontrivial interval ending at
$v_j$ on which the whole block is accepted.

Set
\begin{equation*}
 \mathfrak J_j
 :=\{q\in[s_*,v_j]:
       \mathcal B_j\subset\operatorname{Acc}(t)
       \text{ for every }t\in[q,v_j]\},
 \qquad
 \underline s_j:=\inf\mathfrak J_j.
\end{equation*}
The preceding observation gives $\mathfrak J_j\ne\varnothing$.

Suppose $\underline s_j>s_*$.  For every
$t\in(\underline s_j,v_j]$, the block is accepted throughout
$[t,v_j]$.  Hence the conditional Green barrier and transport estimate
are valid on that interval.  Applying
\eqref{eq:accepted-packet-moment-transport} between $t$ and $v_j$, and
then letting $t\downarrow\underline s_j$, gives, uniformly on
$\mathcal B_j$,
\begin{equation*}
 M_{f(\underline s_j)}(x,r)
 \geq\frac{\varepsilon_{\mathrm{pkt}}}{2}+2\delta_M,
 \qquad
 \rho_{f(\underline s_j)}(x,r)
 \geq\rho_++2\delta_\rho.
\end{equation*}
Here we used \eqref{eq:robust-block-strict-margins} and
\eqref{eq:backward-persistence-tail-choice}.  The whole compact block
therefore lies strictly inside the accepted set at
$\underline s_j$.  Ordinary continuity extends the acceptance interval
slightly to the left, contradicting the definition of
$\underline s_j$.  Thus $\underline s_j=s_*$.

Choose $q_n\downarrow s_*$ with $q_n\in\mathfrak J_j$.  Apply the same
transport estimate on $[q_n,v_j]$ and let $n\to\infty$.  Time
continuity gives the strict margins at $s_*$ and hence
$s_*\in\mathfrak J_j$.  The same estimates at every intermediate time
give \eqref{eq:backward-persistence-margins} on the full closed interval.
\end{proof}

Each block is detected only at its selected time $v_j$.
The proposition supplies the separate persistence argument that carries
it to the common earlier time $s_*$.

\begin{proof}[Proof of Theorem~\ref{thm:finite-time-regularity}]
If $T_{\max}<\infty$, the continuation--concentration alternative
and Lemma~\ref{lem:terminal-harmonic-profile} give the selected profiles
above.  Proposition~\ref{prop:uniform-backward-persistence} implies
\[
 M_{f(s_*)}(x_j,r_j)
 \geq\frac{\varepsilon_{\mathrm{pkt}}}{2}+\delta_M.
\]
But $f(s_*)$ is smooth at the fixed time $s_*<T_{\max}$, and
\eqref{eq:packet-volume-comparison} gives
\[
 M_{f(s_*)}(x_j,r_j)
 \leq2C_{\mathrm{vol}}\|df(s_*)\|_\infty^2r_j^2\longrightarrow0.
\]
This contradiction proves $T_{\max}=\infty$.
\end{proof}
\section{Bubble-Tree Analysis at Infinite Time}
\label{sec:long-time-ridge-tree}

The energy identity provides only weighted $L^2$ control of the Dirichlet energy tension field. By
localizing the energy estimates to regular regions of the PW packet
charts, we extract harmonic profiles and organize them into a finite
bubble tree along a subsequence. We then establish an energy identity  including both bubble energies  and the
energy remaining in the necks.

\subsection{Localized transport and the PW mobility}
\label{subsec:lt-local-transport}
The argument selects an almost-harmonic map on a compact
region at a comparison time and transports its strong $H^1$ limit
back to the prescribed time.  Local profile transport is also used
in \cite[Lemma~2.10]{JendrejLawrieSchlag2025}; here the instantaneous
PW source supplies the mobility comparison needed for this step.

We use the parameter measure of Section~2,
\begin{equation*}
 d\Lambda_t(p,q)=W_{f(t)}(p,q)\frac{dA_h(p)\,dq}{q^3},\qquad
 \Lambda_t\bigl(\Sigma\times(0,r_*)\bigr)\le C_W.
\end{equation*}
Let $m_{\rm src}=\varepsilon_{\rm pkt}/4$ denote the mass-onset
threshold.  For every active packet, $W_{f(t)}(p,q)>0$ implies
\begin{equation}
 M_{f(t)}(p,q)>m_{\rm src},\quad
 d_h(p,b_t(p,q))\le2q,\quad
 \operatorname{supp}\chi_{p,q}\subset B_{2q}(p).
 \label{eq:lt-new-active-packet}
\end{equation}
The localized transport argument uses only these consequences of
the cutoffs and the total weight bound.  The PW representation in
the final subsection also uses the acceptance margins fixed in
Section~4; no cutoff is changed here.

\begin{lemma}
\label{lem:lt-new-green-comparison}
Set $D_\Sigma=1+\operatorname{diam}_h\Sigma+r_*$.  The
heat-regularized Green kernel satisfies
\begin{equation}
 K_q(x,b):=\int_\Sigma G_h(x,y)H_{q^2}(y,b)\,dA_h(y)
 =\frac1{2\pi}\log\frac{D_\Sigma}{q+d_h(x,b)}+O(1),
 \label{eq:lt-new-log-kernel}
\end{equation}
uniformly for $0<q<r_*$ and $x,b\in\Sigma$.  Suppose $X,Y\subset\Sigma$
satisfy $d_h(x,y)\le C_1r$ for $x\in X$, $y\in Y$, and every active
packet satisfies
\begin{equation}
 q+d_h(x,b_t(p,q))\ge c_1r\qquad(x\in X).
 \label{eq:lt-new-source-exclusion}
\end{equation}
Then, with constants independent of $r$ and $t$,
\begin{equation}
 \inf_X a(t,\cdot)\ge c\sup_Y a(t,\cdot),\qquad
 \inf_X a(t,\cdot)\ge
 c_0\min\left\{1,\left(\frac{c_1r}{D_\Sigma}\right)^{\kappa C_W/\pi}\right\}>0.
 \label{eq:lt-new-mobility-comparison}
\end{equation}
The constants may depend on $c_1,C_1$, the fixed geometry, $\kappa$
and $C_W$.
\end{lemma}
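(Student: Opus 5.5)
We first prove the kernel asymptotics \eqref{eq:lt-new-log-kernel} and then deduce both mobility bounds from the Green reconstruction \eqref{eq:finite-time-green-parameter-reconstruction}.

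\emph{Step 1: the kernel asymptotics.} Write $K_q(x,b)=\int_\Sigma G_h(x,y)H_{q^2}(y,b)\,dA_h(y)$ and use the standard decomposition
\[
G_h(x,y)=\tfrac1{2\pi}\log\frac{D_\Sigma}{d_h(x,y)}+O(1).
\]
It then suffices to show that
\[
\int_\Sigma \log\frac{D_\Sigma}{d_h(x,y)}\,H_{q^2}(y,b)\,dA_h(y)=\log\frac{D_\Sigma}{q+d_h(x,b)}+O(1).
\]
For the lower bound, apply Jensen's inequality to the convex function $-\log$, as in the proof of Proposition~\ref{prop:accepted-block-green-barrier}. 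Together with $d_h(x,y)\le d_h(x,b)+d_h(y,b)$ and the first-moment bound \eqref{eq:heat-kernel-first-moment}, this gives the lower bound $\log D_\Sigma/(d_h(x,b)+C_Hq)$. For the upper bound, split into two cases.
\begin{itemize}
\item If $d_h(x,b)\ge 2q$, Gaussian decay of $H_{q^2}$ makes the contribution from $d_h(y,b)\ge d_h(x,b)/2$ negligible. On the complementary region $d_h(x,y)\ge d_h(x,b)/2$.
\item If $d_h(x,b)<2q$, use the on-diagonal bound $H_{q^2}\le Cq^{-2}$. Integrating $\log(1/d_h(x,y))$ over a ball of radius $\sim q$ gives $\log(1/q)+O(1)$, and the Gaussian tails handle the remainder.
\end{itemize}
We expect this step to be the main technical point. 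The only delicate issue is uniformity for $q$ near $r_*$ and for $d_h(x,b)$ comparable to the diameter; there the constant $D_\Sigma$ and large-time exponential decay of the mean-zero heat kernel keep everything $O(1)$.

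\emph{Step 2: the comparison of $\inf_X a$ with $\sup_Y a$.} Since $a=e^{-2\phi_t}$, this is equivalent to
\[
\phi_t(x)-\phi_t(y)\le C \qquad (x\in X,\ y\in Y).
\]
By the Green reconstruction,
\[
\phi_t(x)-\phi_t(y)=\kappa\int\bigl[K_q(x,b_t)-K_q(y,b_t)\bigr]\,d\Lambda_t,
\]
and by Step 1 each bracket equals $\frac1{2\pi}\log\frac{q+d_h(y,b_t)}{q+d_h(x,b_t)}+O(1)$. The triangle inequality gives $d_h(y,b_t)\le d_h(x,b_t)+C_1r$. Combined with the exclusion hypothesis \eqref{eq:lt-new-source-exclusion}, this yields
\[
\frac{q+d_h(y,b_t)}{q+d_h(x,b_t)}\le 1+\frac{C_1}{c_1}.
\]
So each bracket is bounded by a constant. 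Integrating against $\Lambda_t$, whose total mass is at most $C_W$ by Lemma~\ref{lem:total-packet-weight}, gives the bound with $c=\exp(-C\kappa C_W)$.

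\emph{Step 3: the absolute lower bound on $a$.} By Step 1 and \eqref{eq:lt-new-source-exclusion}, for every active packet and every $x\in X$,
\[
K_q(x,b_t)\le\frac1{2\pi}\log^+\frac{D_\Sigma}{c_1r}+C.
\]
Integrating against $\Lambda_t$ with $\Lambda_t\le C_W$ gives
\[
\phi_t(x)\le\frac{\kappa C_W}{2\pi}\log^+\frac{D_\Sigma}{c_1r}+C\kappa C_W.
\]
Exponentiating $a=e^{-2\phi_t}$ then yields
\[
a\ge c_0\min\bigl\{1,(c_1r/D_\Sigma)^{\kappa C_W/\pi}\bigr\},
\]
which is the second bound in \eqref{eq:lt-new-mobility-comparison}. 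None of the constants depend on $r$ or $t$.
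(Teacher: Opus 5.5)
Your proposal is correct, and Steps~2 and~3 are exactly the paper's argument: the ratio bound $q+d_h(y,b_t)\le(1+C_1/c_1)(q+d_h(x,b_t))$, integration against $\kappa\,d\Lambda_t$ of mass at most $C_W$, and exponentiation. One small wording point: each bracket $K_q(x,b_t)-K_q(y,b_t)$ is only bounded \emph{above}, not two-sidedly, but an upper bound is all you need.

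The genuine difference is in Step~1.

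\emph{The paper's route.} It works in the time variable. It writes $K_q(x,b)=\int_{q^2}^{\infty}(H_s(x,b)-|\Sigma|_h^{-1})\,ds$ and discards the part $s\ge s_0$. It then evaluates the leading parametrix term $\frac1{4\pi}\int_{q^2}^{s_0}e^{-d^2/(4s)}\,ds/s$ by splitting at $s=d^2$. This gives both inequalities at once with the exact coefficient $\frac1{2\pi}$. The cost is that it must control the parametrix remainder and the large-time part.

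\emph{Your route.} You work in space, convolving $G_h=\frac1{2\pi}\log(D_\Sigma/d_h)+O(1)$ against the probability density $H_{q^2}(\cdot,b)$. This buys three things:
\begin{enumerate}
\item All global geometry sits in the $O(1)$ term, so the large-time decay you invoke is not actually needed.
\item The lower bound is the same Jensen argument with \eqref{eq:heat-kernel-first-moment} used in Proposition~\ref{prop:accepted-block-green-barrier}.
\item The upper bound needs only Gaussian heat-kernel bounds.
\end{enumerate}

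\emph{What to do carefully.} In the upper bound, the coefficient of the leading logarithm must come out exactly one. Step~2 relies on the leading terms of $K_q(x,b)$ and $K_q(y,b)$ cancelling, so a bound of the form $C\log(1/q)$ with $C>1$ would not suffice.
\begin{itemize}
\item On $B_q(x)$, write $\log\frac1{d_h(x,y)}=\log\frac1q+\log\frac q{d_h(x,y)}$. Use the unit mass of $H_{q^2}$ for the first term, and $H_{q^2}\le Cq^{-2}$ only for the second.
\item In the case $d_h(x,b)\ge2q$, write $\log\frac{D_\Sigma}q=\log\frac{D_\Sigma}{q+d}+\log(1+d/q)$ on the tail region. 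Again use the exact mass for the first term, and Gaussian decay only for the second. This is harmless because $e^{-cd^2/q^2}\log(1+d/q)$ is bounded.
\end{itemize}
With this bookkeeping your route is complete, and somewhat more elementary than the parametrix computation.
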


\begin{proof}
The semigroup identity is
\[
 K_q(x,b)=\int_{q^2}^{\infty}
 \left(H_s(x,b)-|\Sigma|_h^{-1}\right)ds.
\]
For a fixed small $s_0>0$ the part $s\ge s_0$ is uniformly bounded.
Writing $d=d_h(x,b)$, the singular part of the heat parametrix is
\[
 \frac1{4\pi}\int_{q^2}^{s_0}e^{-d^2/(4s)}\,\frac{ds}{s}
 =\frac1{2\pi}\log^+\frac{\sqrt{s_0}}{\max\{q,d\}}+O(1).
\]
The integrated parametrix remainder is bounded: its leading
coefficient differs from one by $O(d^2)$ and its next term is
integrable in $s$.  Splitting the displayed integral at $s=d^2$
proves the asserted logarithmic estimate.  Off the diagonal and for
$q^2\ge s_0$ the kernel is bounded.  Replacing the truncated
logarithm by that in \eqref{eq:lt-new-log-kernel} changes only a bounded
term.

For an active packet, put $d_x=q+d_h(x,b_t(p,q))$ and define $d_y$
similarly.  The hypotheses give $d_y\le d_x+C_1r\le(1+C_1/c_1)d_x$.
Thus $K_q(x,b)-K_q(y,b)\le C$.  Integrating against
$\kappa\,d\Lambda_t$ proves $\phi_t(x)-\phi_t(y)\le C$ and hence the
first inequality in \eqref{eq:lt-new-mobility-comparison}.  The same
kernel estimate and \eqref{eq:lt-new-source-exclusion} give
\[
 \phi_t(x)\le C+
 \frac{\kappa C_W}{2\pi}\log^+\frac{D_\Sigma}{c_1r},
\]
which proves the second inequality.  The mean-zero term is already
included in $K_q$; no exterior source contribution has been removed.
\end{proof}

Choose an isothermal coordinate map $\vartheta$ with
$h=e^{2\omega(x)}|dx|^2$.  On a bounded coordinate domain $V$, fix
\begin{equation}
 \Psi(\xi)=\vartheta^{-1}(z+r\xi),\qquad
 v(t)=f(t)\circ\Psi,\qquad
 \beta(t,\xi)=r^{-2}a(t,\Psi(\xi))e^{-2\omega(z+r\xi)}.
 \label{eq:lt-new-fixed-chart}
\end{equation}
We require $\Psi(\overline V)$ to stay in a compact subchart.  The
center and radius are held fixed in time.  The equation and its local
dissipation become exactly
\begin{equation}
 v_t=\beta\tau(v),\qquad
 \int_V\beta|\tau(v)|^2\,d\xi
 =\int_{\Psi(V)}a|\tau_h(f)|^2\,dA_h\le\mathscr D(t),
 \label{eq:lt-new-chart-equation}
\end{equation}
where $\tau$ and the energy measure $d\mu_v=\tfrac12|dv|^2d\xi$ are
Euclidean.  For $I=[s,t]$ set
\begin{equation*}
 B(u)=\sup_V\beta(u,\cdot),\qquad
 \Theta(I)=\int_I B(u)\,du,\qquad
 D(I)=\int_I\mathscr D(u)\,du.
\end{equation*}

\begin{lemma}
\label{lem:lt-new-transport}
For $\zeta\in C_c^\infty(V)$,
\begin{equation}
 \left|\int\zeta\,d\mu_{v(t)}-\int\zeta\,d\mu_{v(s)}\right|
 \le\|\zeta\|_\infty D(I)
 +\|d\zeta\|_\infty\sqrt{2E_0D(I)\Theta(I)}.
 \label{eq:lt-new-energy-transport}
\end{equation}
Moreover, for any fixed smooth map $Q:V\to N$ and
$\zeta\in C_c^\infty(V)$, define
$F_Q(u)=\tfrac12\int\zeta^2|d(v(u)-Q)|^2$, using a fixed isometric
embedding of $N$.  Then
\begin{align}
 \|v(t)-v(s)\|_{L^2(V)}^2&\le D(I)\Theta(I),
 \label{eq:lt-new-map-transport}\\
 |F_Q(t)-F_Q(s)|&\le
 \|\zeta\|_\infty^2D(I)+C_{Q,\zeta,E_0}\sqrt{D(I)\Theta(I)}.
 \label{eq:lt-new-profile-transport}
\end{align}
The last constant uses $Q$ only on the compact support of $\zeta$.
No derivative of $\beta$ is required.
\end{lemma}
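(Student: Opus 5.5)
All three estimates come from differentiating in time along the chart equation \eqref{eq:lt-new-chart-equation}, $v_t=\beta\tau(v)$, and applying Cauchy--Schwarz with the weight $\beta$. The $\beta$-weighted dissipation at time $u$ is bounded by $\mathscr D(u)$. Every factor of $\beta$ that is not absorbed into the dissipation is bounded by $B(u)=\sup_V\beta$. The derivative $d\zeta$ or $dQ$ never falls on $\beta$, so no derivative of $\beta$ is needed. We write $\mathscr D(u)$ for the pointwise dissipation density in time, so that $D(I)=\int_I\mathscr D$. The same argument goes through if $\mathscr D(u)$ instead denotes the weighted integral of $|\tau|^2$ at time $u$.

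\emph{Energy transport \eqref{eq:lt-new-energy-transport}.} For fixed $\zeta$, differentiate and integrate by parts exactly as in \eqref{eq:signed-localized-energy-identity}, using the Euclidean Laplacian in the chart:
\[
 \frac{d}{du}\int\zeta\,d\mu_{v}
 =-\int\zeta\beta|\tau(v)|^2\,d\xi-\int\beta\langle dv,d\zeta\otimes\tau(v)\rangle\,d\xi .
\]
The first term is bounded by $\|\zeta\|_\infty\mathscr D(u)$. For the second, apply Cauchy--Schwarz in space and time with weight $\beta$:
\[
 \Bigl(\int_I\!\int\beta|\tau|^2\Bigr)^{1/2}
 \Bigl(\int_I B(u)\|d\zeta\|_\infty^2\!\int|dv|^2\,du\Bigr)^{1/2}
 \le\|d\zeta\|_\infty\sqrt{D(I)\cdot 2E_0\Theta(I)} .
\]
Here $\int_V|dv|^2\le 2E_0$ by conformal invariance of the energy.

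\emph{Map transport \eqref{eq:lt-new-map-transport}.} Write $v(t)-v(s)=\int_I\beta\tau(v)\,du$. Minkowski's inequality and Cauchy--Schwarz in time with weight $B$ give
\[
 \|v(t)-v(s)\|_{L^2}\le\int_I\Bigl(\int\beta^2|\tau|^2\Bigr)^{1/2}du
 \le\int_I B^{1/2}\Bigl(\int\beta|\tau|^2\Bigr)^{1/2}du
 \le\Theta(I)^{1/2}D(I)^{1/2}.
\]
Squaring gives the claim.

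\emph{Profile functional \eqref{eq:lt-new-profile-transport}.} This is the step needing the most care. Expand
\[
 F_Q=\int\tfrac12\zeta^2|dv|^2-\int\zeta^2\langle dv,dQ\rangle+\tfrac12\int\zeta^2|dQ|^2 .
\]
The last term is constant in time. The first term is controlled by \eqref{eq:lt-new-energy-transport} with test function $\zeta^2$. This contributes $\|\zeta\|_\infty^2D(I)+C\sqrt{E_0D\Theta}$, since $\|d(\zeta^2)\|_\infty$ is a constant depending only on $\zeta$. For the cross term, integrate by parts in space: $\int\zeta^2\langle dv,dQ\rangle=-\int\langle v,\operatorname{div}(\zeta^2dQ)\rangle$. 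This is allowed because $\zeta$ has compact support in $V$. The cross term is therefore a linear functional of $v$ with a smooth, compactly supported coefficient $G:=\operatorname{div}(\zeta^2 dQ)$. Its change over $I$ is at most $\|G\|_{L^2}\|v(t)-v(s)\|_{L^2}\le\|G\|_{L^2}\sqrt{D(I)\Theta(I)}$ by \eqref{eq:lt-new-map-transport}. The norm $\|G\|_{L^2}$ depends only on $Q$ and $\zeta$ on $\operatorname{supp}\zeta$. Summing the pieces gives the stated bound with $C_{Q,\zeta,E_0}$. The only obstacle is to keep every time derivative free of derivatives of $\beta$. In the computation above this is automatic, because spatial derivatives land only on $\zeta$ and $Q$ and never on $v_t=\beta\tau$.
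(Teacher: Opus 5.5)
Your proof is correct. The first two estimates follow the paper's argument. The paper applies Cauchy--Schwarz in time pointwise in $\xi$ with weight $B(u)$ and then uses $|v_u|^2/B\le|v_u|^2/\beta=\beta|\tau(v)|^2$, which is equivalent to your Minkowski-then-Cauchy--Schwarz step.

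For \eqref{eq:lt-new-profile-transport} you take a different route. The paper differentiates $F_Q$ directly in time. It uses again that the normal part of $\Delta v$ is orthogonal to $v_u$, which gives
\[
F_Q'(u)=-\int\zeta^2\beta|\tau(v)|^2+\int\zeta^2\langle\Delta Q,v_u\rangle-2\int\zeta\langle d\zeta\cdot d(v-Q),v_u\rangle .
\]
It then applies weighted Cauchy--Schwarz to the last two terms, so its constant is expressed through $\|\zeta\Delta Q\|_{L^2}$ and $E_0+E(Q;\operatorname{supp}\zeta)$.

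You instead expand the square. The quadratic part is the localized energy with test function $\zeta^2$, handled by \eqref{eq:lt-new-energy-transport}. After a spatial integration by parts, the cross term becomes the linear functional $v\mapsto-\int\langle v,\operatorname{div}(\zeta^2dQ)\rangle$, and its increment is controlled by \eqref{eq:lt-new-map-transport}.

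Your route makes the third estimate a corollary of the first two, with no new time differentiation. The tangential--normal orthogonality enters only once, inside the localized energy identity. It also gives the explicit constant $\sqrt{2E_0}\,\|d(\zeta^2)\|_\infty+\|\operatorname{div}(\zeta^2dQ)\|_{L^2}$. The paper's version is a single self-contained computation but needs the orthogonality a second time.

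Both constants depend on the same data, namely first and second derivatives of $Q$ on $\operatorname{supp}\zeta$. Neither argument uses that $Q$ is $N$-valued.
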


\begin{proof}
The energy identity with a spatial test reads
\[
 \frac{d}{du}\int\zeta\,d\mu_v
 =-\int\zeta\beta|\tau(v)|^2
 -\int\langle d\zeta,\beta\tau(v)\otimes dv\rangle.
\]
Cauchy--Schwarz, \eqref{eq:lt-new-chart-equation} and
$E(v;V)\le E_0$ prove \eqref{eq:lt-new-energy-transport}.
For the map itself, Cauchy--Schwarz with the scalar weight $B(u)$ gives
\[
 \int_V\left|\int_s^t v_u\,du\right|^2
 \le\Theta(I)\int_I\int_V\frac{|v_u|^2}{B(u)}
 \le\Theta(I)\int_I\int_V\frac{|v_u|^2}{\beta}
 \le\Theta(I)D(I).
\]
Finally, since the normal component of $\Delta v$ is orthogonal to
$v_u\in T_vN$, integration by parts gives
\[
 \begin{split}
 F_Q'(u)={}&-\int\zeta^2\beta|\tau(v)|^2
 +\int\zeta^2\langle\Delta Q,v_u\rangle\\
 &-2\int\zeta\langle d\zeta\cdot d(v-Q),v_u\rangle.
 \end{split}
\]
The squared second Cauchy--Schwarz factors for the last two terms are
bounded, respectively, by
$\Theta(I)\int\zeta^2|\Delta Q|^2$ and
$C\Theta(I)\|d\zeta\|_\infty^2(E_0+E(Q;\operatorname{supp}\zeta))$.
This proves \eqref{eq:lt-new-profile-transport}.
\end{proof}

\subsection{Harmonic profiles at prescribed times}
\label{subsec:lt-common-profiles}

Let $\varepsilon_{\rm ell}>0$ be an energy threshold for the local
$L^2$-tension estimate: if $E(u;B_2)<\varepsilon_{\rm ell}$, then
\[
 \|u-u_{B_1}\|_{W^{2,2}(B_1)}
 \le C\bigl(\|du\|_{L^2(B_2)}+\|\tau(u)\|_{L^2(B_2)}\bigr).
\]
Here $N\subset\mathbb R^K$ is the fixed isometric embedding and
$u_B:=|B|^{-1}\int_B u\,d\xi$ is the Euclidean average; it need not
belong to $N$.
We use this estimate after fixed dilations of the disks; see
\cite[Lemma~2.1]{WangWeiZhang2017}.  Fix once and for all
\begin{equation*}
 \varepsilon_*:=\frac1{64}\min\{m_{\rm src},\varepsilon_{\rm ell}\}.
\end{equation*}

\begin{proposition}
\label{prop:lt-new-prescribed-compactness}
Let $t_j\to\infty$.  Consider fixed-in-time charts
$\Psi_j(\xi)=\vartheta^{-1}(z_j+r_j\xi)$, with uniformly controlled
background conformal factors on each testing domain, and write
$v_j(t)=f(t)\circ\Psi_j$.  Suppose on a domain $U$ that
\[
 \begin{aligned}
 v_j(t_j)&\rightharpoonup Q &&\text{in }H^1_{\rm loc}(U,\mathbb R^K),\\
 \mu_{v_j(t_j)}&\stackrel{*}{\rightharpoonup}\mu
                             &&\text{as Radon measures on }U.
 \end{aligned}
\]
Let $S=\{x\in U:\mu(\{x\})\ge\varepsilon_*\}$.  Then $S$ is
finite, and
\begin{equation*}
 v_j(t_j)\longrightarrow Q\quad\hbox{strongly in }
 H^1_{\rm loc}(U\setminus S),\qquad \tau(Q)=0\quad\hbox{on }U.
\end{equation*}
Here harmonicity on $U$ includes removal of the points in $S$.
Consequently
\begin{equation}
 \mu=\mu_Q+\sum_{x\in S}m_x\delta_x,\qquad m_x\ge\varepsilon_*.
 \label{eq:lt-new-atomic-measure}
\end{equation}
The same assertions hold for the root sequence $f(t_j)$ on $\Sigma$.
For charts exhausting $\mathbb R^2$, $Q$ has finite energy and extends
across infinity to a harmonic sphere, possibly constant.
\end{proposition}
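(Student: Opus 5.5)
The plan is to produce, near each prescribed time $t_j$, a comparison time $t_j'$ at which the rescaled maps are almost harmonic on compact subsets of $U\setminus S$. I would extract strong limits there and then carry them back to $t_j$ with Lemma~\ref{lem:lt-new-transport}. Finiteness of $S$ is immediate: $\mu(U)\le E_0$ and each atom in $S$ has mass at least $\varepsilon_*$, so $\#S\le E_0/\varepsilon_*$.

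\emph{Step 1: smallness and Green comparison on a regular compact set.} Fix a compact $K\Subset U\setminus S$ and a slightly larger compact $K'$ with $K\Subset K'\Subset U\setminus S$. Every $x\in K'$ has $\mu(\{x\})<\varepsilon_*$. Hence there is a fixed $\delta>0$ such that, for all large $j$ and all $x\in K'$, the disc $B_{2\delta}(x)$ carries $\mu_{v_j(t_j)}$-mass below $2\varepsilon_*<m_{\rm src}/8$. In the physical variables this forbids any active packet with $q\lesssim\delta r_j$ whose support meets $\Psi_j(K')$. The reason is that by \eqref{eq:lt-new-active-packet} such a packet has mass $>m_{\rm src}$ inside $B_{2q}(p)$, and its barycenter lies within $2q$ of $p$. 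Thus \eqref{eq:lt-new-source-exclusion} holds with $c_1\sim\delta$, and Lemma~\ref{lem:lt-new-green-comparison} yields $\inf_{K'}\beta\ge c\sup_{K'}\beta$, where $\beta$ is the rescaled mobility of \eqref{eq:lt-new-fixed-chart}. This holds at $t_j$, and it holds at any later time for which the same small-mass property persists.

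\emph{Step 2: choice of comparison interval.} Put $\omega_j:=E(t_j)-\lim_{t\to\infty}E(t)\to0$; then $D(I)\le\omega_j$ for every $I\subset[t_j,\infty)$. Choose $t_j'>t_j$ with $\Theta([t_j,t_j'])=\sqrt{\omega_j}$, where $\Theta$ is computed with $V=K'$. If $\Theta$ never reaches this value, take the whole half-line; in that case $\Theta$ is bounded and $D\to0$, and the argument below only improves. Persistence of the small-mass property from Step~1 on $I_j=[t_j,t_j']$ follows from \eqref{eq:lt-new-energy-transport} through a continuity (first-exit) argument in the style of Proposition~\ref{prop:uniform-backward-persistence}. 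On the maximal subinterval where the comparison holds, the change of any localized mass is at most $\omega_j+C\omega_j^{3/4}\to0$, so the threshold is never reached. Averaging $\int\beta|\tau|^2$ against the probability measure $B(u)\,du/\Theta(I_j)$, and using $\beta\ge cB$ on $K'$, gives a time $s_j\in I_j$ with
\[
 \|\tau(v_j(s_j))\|_{L^2(K')}^2\le C\,\omega_j/\sqrt{\omega_j}\to0 .
\]
The local energy at $s_j$ stays below $\varepsilon_{\rm ell}$ on discs of radius $\delta$. The $\varepsilon$-regularity estimate therefore gives uniform $W^{2,2}$ bounds on $K$. Rellich compactness then yields strong $H^1(K)$ convergence of $v_j(s_j)$ to a limit $Q'$ with $\tau(Q')=0$ on $K$.

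\emph{Step 3: transport back to $t_j$.} By \eqref{eq:lt-new-map-transport}, $\|v_j(s_j)-v_j(t_j)\|_{L^2}^2\le\omega_j^{3/2}\to0$, hence $Q'=Q$. Apply \eqref{eq:lt-new-profile-transport} with this $Q$, which is smooth on $K$, and a cutoff $\zeta$ supported in $K'$. The error is $\omega_j+C\omega_j^{3/4}\to0$, and $F_Q(s_j)\to0$, so $F_Q(t_j)\to0$. This is strong $H^1_{\rm loc}$ convergence on $U\setminus S$. Since $Q\in H^1$ is harmonic on $U\setminus S$ with $S$ finite, the removable singularity theorem \cite{SacksUhlenbeck1981} gives $\tau(Q)=0$ on all of $U$. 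Strong convergence off $S$ identifies $\mu$ with $\mu_Q$ away from $S$, which proves \eqref{eq:lt-new-atomic-measure}. The root case on $\Sigma$ is the same argument with the identity chart. For exhausting charts, lower semicontinuity gives $E(Q)\le E_0$, and removability at infinity yields a harmonic sphere.

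The main obstacle is Step 2, specifically justifying the mobility comparison of Lemma~\ref{lem:lt-new-green-comparison} uniformly on $I_j$. Source exclusion is only known at $t_j$, and propagating it requires the transport estimate, which itself uses $\Theta$. I would resolve this circularity with the first-exit bootstrap described above, choosing the normalization $\Theta=\sqrt{\omega_j}$ so that $D/\Theta$ and $D\,\Theta$ vanish simultaneously.
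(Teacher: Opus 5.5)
Your outline follows the paper's route: a clock interval for $\Theta$, Green comparison on the regular set, a $B$-averaged almost-harmonic slice, $\varepsilon$-regularity, and then map and profile transport back to $t_j$. However, the case you dismiss in Step~2 is a real gap. If $\Theta$ never reaches $\sqrt{\omega_j}$, the argument does not ``only improve.'' The slice you select satisfies only $\|\tau(v_j(s_j))\|_{L^2}^2\le C\,D(I_j)/\Theta(I_j)$. On the half-line $D(I_j)=\omega_j$, while a priori $\Theta(I_j)$ may be far smaller than $\omega_j$, so no almost-harmonic time is produced. Small transport errors do not help here; the clock must actually run.

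The missing ingredient is the second inequality in Lemma~\ref{lem:lt-new-green-comparison}. Source exclusion gives, for each fixed $j$, a time-independent lower bound $\beta_j\ge c_j>0$ on the regular set, where possibly $c_j\to0$ as $j\to\infty$. If the clock never reached its target, small mass, and hence exclusion, would persist for all $t\ge t_j$. That forces $\Theta\ge c_j(t-t_j)\to\infty$, a contradiction. So the comparison interval always exists, though with no uniform bound on its length; this is the paper's ``completion of the interval'' step.

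The circularity you flag is not real, but your domains need nesting. Estimate \eqref{eq:lt-new-energy-transport} involves only $B=\sup_V\beta$ and holds without any mobility comparison. Persistence of small mass on $\{\Theta\le\text{target}\}$ is therefore immediate, and no first-exit bootstrap is needed. This requires the small-mass cutoffs to be supported in the set $V$ that defines $B$. With $V=K'$ and exclusion demanded at every $x\in K'$, that fails, because the discs $B_{2\delta}(x)$ leave $K'$.

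Take $K\Subset U_1\Subset V\Subset U\setminus S$ as the paper does:
\begin{itemize}
\item cutoffs supported in $V$ and covering $\overline{U_1}$;
\item exclusion required only on $U_1$;
\item the lemma applied with $X=\Psi_j(U_1)$ and $Y=\Psi_j(V)$, giving $\inf_{U_1}\beta_j\ge c\sup_V\beta_j$.
\end{itemize}
Likewise, the cutoff in \eqref{eq:lt-new-profile-transport} must be supported where strong convergence at $s_j$ was proved. Finally, the normalization $\sqrt{\omega_j}$ is unnecessary. With target $\Theta=1$, both $D\Theta$ and $D/\Theta$ are already at most $\omega_j$.
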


\begin{proof}
Fix $K\Subset U_1\Subset V\Subset U\setminus S$.  In
\eqref{eq:lt-new-fixed-chart} use the
index $j$, and put
\[
 B_j(t)=\sup_V\beta_j(t,\cdot),\qquad
 \Theta_j(s)=\int_{t_j}^s B_j(t)\,dt.
\]
All subsequent estimates are first made on the set of times
$\Theta_j(s)\le1$.

\emph{Small-energy neighborhoods.}
There is a fixed $\delta>0$ and a finite family of smooth cutoffs,
with values in $[0,1]$,
supported in $V$ which equal one on neighborhoods covering
$\overline{U_1}$, and whose initial masses are less than
$3\varepsilon_*$.  They can be chosen so that every point of
$U_1$ has a coordinate $4\delta$-ball inside one of these
neighborhoods.  To see this, first choose small balls with limiting
mass less than $2\varepsilon_*$, using the absence of atoms of size
$\varepsilon_*$ on $\overline{U_1}$, and then use a finite cover and
weak convergence.  Shrink $\delta$ if necessary.  By
\eqref{eq:lt-new-energy-transport}, every one of these masses remains
less than $4\varepsilon_*$ while $\Theta_j\le1$, for all large $j$.
This is below both $m_{\rm src}$ and $\varepsilon_{\rm ell}$.

\emph{Mobility comparison and completion of the interval.}
For $x\in\Psi_j(U_1)$, an active packet cannot satisfy
$q+d_h(x,b_t(p,q))<c\delta r_j$, with $c>0$ sufficiently small
and independent of $j$.  Indeed,
\eqref{eq:lt-new-active-packet} puts its entire cutoff support
within distance $4q+d_h(x,b_t(p,q))$ of $x$, contradicting the
preceding mass bound.  Lemma~\ref{lem:lt-new-green-comparison}, and
the uniform upper and lower bounds for $e^{2\omega}$, therefore give
\begin{equation}
 \inf_{U_1}\beta_j(t,\cdot)\ge c_{U_1,V}B_j(t)
 \qquad(\Theta_j(t)\le1).
 \label{eq:lt-new-local-clock-comparison}
\end{equation}
For each fixed $j$ the same lemma also gives a strictly positive,
time-independent lower bound for $\beta_j$ on $U_1$ for these
times.  This lower bound may tend to zero with $j$.

There is a finite $s_j>t_j$ with $\Theta_j(s_j)=1$.  Otherwise the
small-energy and mobility estimates just obtained hold for all
$t\ge t_j$.  The positive lower bound for that fixed $j$ then
forces $\Theta_j(s)\to\infty$, a contradiction.  Thus the interval
exists without a uniform bound on its length.  Throughout it the
total dissipation is at most $\int_{t_j}^\infty\mathscr D(t)\,dt\to0$.

\emph{A comparison almost-harmonic slice.}
By \eqref{eq:lt-new-chart-equation} and
\eqref{eq:lt-new-local-clock-comparison},
\[
 \int_{t_j}^{s_j}B_j(t)
       \|\tau(v_j(t))\|_{L^2(U_1)}^2\,dt
       \le C\int_{t_j}^\infty\mathscr D(t)\,dt,
 \qquad \int_{t_j}^{s_j}B_j(t)\,dt=1.
\]
Choose $\widehat t_j\in[t_j,s_j]$ with
$\|\tau(v_j(\widehat t_j))\|_{L^2(U_1)}^2
\le2C\int_{t_j}^\infty\mathscr D(t)\,dt$.
The preserved small-energy bounds and the elliptic estimate on
smaller disks compactly contained in $U_1$ give strong $H^1$
compactness on compact subsets of $U_1$.
After a subsequence the limit $\widehat Q$ is smooth and harmonic
there.  Equation~\eqref{eq:lt-new-map-transport} gives
\[
 \|v_j(\widehat t_j)-v_j(t_j)\|_{L^2(U_1)}^2
 \le\int_{t_j}^\infty\mathscr D(t)\,dt\longrightarrow0,
\]
so $\widehat Q=Q$.

\emph{Return to the prescribed time.}
Choose $0\le\zeta\le1$ in $C_c^\infty(U_1)$, equal to one on $K$, and use this
smooth harmonic $Q$ in \eqref{eq:lt-new-profile-transport}.  It gives
\[
 \begin{split}
 \frac12\int_K|d(v_j(t_j)-Q)|^2
 &\le \frac12\int\zeta^2|d(v_j(\widehat t_j)-Q)|^2
       +\int_{t_j}^\infty\mathscr D(t)\,dt\\
 &\quad+C_{Q,\zeta,E_0}
          \left(\int_{t_j}^\infty\mathscr D(t)\,dt\right)^{1/2}
          \longrightarrow0.
 \end{split}
\]
Together with the $L^2$ convergence, this proves strong convergence
on $K$.  Apply the argument to a compact exhaustion; all local
limits agree with the original weak limit.  Equivalently, applying
it to any further subsequence shows strong convergence for the
chosen weakly convergent sequence.

Since $\#S\le E_0/\varepsilon_*$ and $Q$ has finite energy,
finite-energy removability gives harmonicity across $S$.
Strong convergence identifies the measure off $S$, proving
\eqref{eq:lt-new-atomic-measure}.  A finite isothermal atlas proves
the root statement by the same argument with fixed spatial scales.
For an expanding plane chart, finite energy also removes the point
at infinity.  The removability and sphere-gap facts used here are
recalled, for example, in \cite[Proposition~1.1]{Parker1996}.
\end{proof}

\subsection{PW packet charts and the vertex--edge decomposition}
\label{subsec:lt-new-extraction}

We construct the finite tree on one subsequence of the prescribed
times.  All its non-root charts will use the barycenters and
variances of accepted PW packets.

\medskip\noindent\emph{Representing a harmonic component by a PW packet.}
For an accepted packet $P=(p,q)$, choose an $h$-orthonormal frame
$\mathcal I_b:\mathbb R^2\to T_b\Sigma$ in the corresponding
neighborhood and define
\begin{equation}
 \Psi^{\rm PW}_{t,P}(\xi)
 :=\exp_{b_t(P)}\bigl(\sigma_t(P)\mathcal I_{b_t(P)}\xi\bigr).
 \label{eq:lt-pw-chart}
\end{equation}
When $\sigma_{t_j}(P_j)\to0$, these charts are injective on every
fixed disk for large $j$, with metric
$(\Psi^{\rm PW}_{t_j,P_j})^*g_{t_j}$.  

Suppose Proposition~\ref{prop:lt-new-prescribed-compactness} gives
a nonconstant harmonic profile $Q$ in an isothermal rescaling
$\vartheta^{-1}(c_j+\lambda_j\xi)$, with $\lambda_j\to0$.
Its limiting energy measure has the form
$\mu=\mu_Q+\sum m_x\delta_x$, with $\mu(\mathbb R^2)\le E_0$.
The harmonic-profile estimate of Section~4 remains valid
for this measure.  Indeed, its proof chooses $L=\max|dQ|>0$ and a pair
$(z,R)$ with $\tfrac12|dQ|^2\ge L^2/8$ on $B_R(z)$.  Since
$\chi_{z,R}=1$ on $B_R(z)$ and $\mu\ge\mu_Q$,
\begin{align*}
 M_\mu(z,R)&\ge \frac{\pi L^2R^2}{8}\ge m_{\rm sph}>\varepsilon_{\rm pkt}/2,\\
 \frac{\int\chi_{z,R}(y)|y-b|^2\,d\mu(y)}
      {M_\mu(z,R)R^2}
 &\ge \frac{\pi L^2R^2}{16E_0}\ge\rho_{\rm sph}^2>\rho^2_+.
\end{align*}
With $h=e^{2\omega}|dx|^2$, transfer this pair to
\[
 p_j=\vartheta^{-1}(c_j+\lambda_j z),\qquad
 q_j=e^{\omega(c_j)}\lambda_jR,\qquad P_j=(p_j,q_j).
\]
 Weak convergence of the energy measures therefore
gives convergence of the masses and uniform convergence, on bounded
sets of candidate centers, of the normalized Fr\'echet functionals.
Their Euclidean limit is strictly convex, so the barycenters and
variances converge as well.  Consequently
\begin{equation}
 W_{f(t_j)}(P_j)=1,\qquad
 \frac{\vartheta(b_{t_j}(P_j))-c_j}{\lambda_j}=O(1),\qquad
 0<c\le\frac{\sigma_{t_j}(P_j)}{\lambda_j}\le C<\infty.
 \label{eq:lt-pw-representation}
\end{equation}
After a subsequence, the transition from the PW chart
\eqref{eq:lt-pw-chart} to the original chart converges smoothly
on compact sets to a nondegenerate Euclidean similarity.  It
preserves strong $H^1$ convergence away from the transformed atom
set.  We henceforth represent this component in its PW chart and
denote the transformed harmonic map again by $Q$.

\medskip\noindent\emph{Completeness at concentration atoms.}
Recall the harmonic-sphere gap
\[
 \hbar_N=\inf\{E(Q):Q:S^2\to N
                   \text{ is nonconstant and harmonic}\}>0,
\]
with $\hbar_N=+\infty$ when the set is empty.  Let $M$ be a positive
atom of a root or rescaled limiting energy measure, placed at
zero.  Proposition~\ref{prop:lt-new-prescribed-compactness} gives
$M\ge\varepsilon_*$.  We show that it contains a subordinate
nonconstant profile of energy at most $M$.

For this completeness argument use an isothermal representative
of the chart, so that all subsequent rescalings are affine.
Write $u_j$ for the maps in these coordinates and
$\mu_j=\tfrac12|du_j|^2d\xi$.  Fix $\eta=\varepsilon_*/4$.
Choose $\delta_j\downarrow0$ and $k_j\to\infty$ sufficiently slowly
that
\begin{equation}
 \mu_j(B_{\delta_j})\longrightarrow M,\qquad
 \mu_j(B_{\delta_j/k_j})\longrightarrow M.
 \label{eq:lt-new-isolating-disks}
\end{equation}
This follows by diagonalization over fixed continuity radii of
the limiting measure.  For $0\le s\le\delta_j/4$ put
\[
 F_j(s)=\max_{c\in\overline B_{\delta_j/2}}\mu_j(B_s(c)).
\]
Smoothness makes $F_j$ continuous and nondecreasing, with
$F_j(0)=0$ and $F_j(\delta_j/k_j)>M-\eta$ for large $j$.
Let $\ell_j$ be the smallest radius with $F_j(\ell_j)=M-\eta$
and let $c_j$ be a maximizing center.  Then
$\ell_j\le\delta_j/k_j$.  By \eqref{eq:lt-new-isolating-disks},
the selected disk must meet $B_{\delta_j/k_j}$, since the energy
in the surrounding annulus tends to zero.  Thus
$|c_j|\le2\delta_j/k_j$.

Set $w_j(\xi)=u_j(c_j+\ell_j\xi)$.  Energy boundedness and
compactness of $N$ give a diagonal subsequence with
\[
 \begin{aligned}
 w_j&\rightharpoonup Q&&\text{in }H^1_{\rm loc}(\mathbb R^2),\quad
 \mu_{w_j}&\stackrel{*}{\rightharpoonup}\nu
                                      &&\text{as Radon measures}.
 \end{aligned}
\]
Every fixed rescaled disk lies inside $B_{\delta_j}$ for large $j$,
so $\nu(\mathbb R^2)\le M$.  For fixed $z\in\mathbb R^2$ and
$0<\rho<1$, the disk $B_{\rho\ell_j}(c_j+\ell_jz)$ is admissible
in the definition of $F_j$.  The Portmanteau inequality on open
balls, followed by $\rho\downarrow0$, therefore gives
\begin{equation}
 \nu(\{z\})\le M-\eta\qquad(z\in\mathbb R^2).
 \label{eq:lt-new-atom-decrease}
\end{equation}
On the other hand,
$\nu(\overline B_1)\ge M-\eta>0$, by the Portmanteau inequality
on this compact set.

The composed rescaling is still a translation and dilation in
the initial isothermal coordinate.  Applying Proposition
\ref{prop:lt-new-prescribed-compactness} to the displayed weak
limits shows that $Q$ is harmonic and that $\nu$ consists of
its energy and finitely many atoms.  If $Q$ is nonconstant, it
is the required sphere.  Otherwise the positive measure on
$\overline B_1$ contains an atom, to which the same construction
applies.  At each repeated step its mass decreases by at least
$\eta$, by \eqref{eq:lt-new-atom-decrease}, while every positive
atom has mass at least $\varepsilon_*$.  A nonconstant limit
must therefore occur after finitely many steps.  Its energy is
at most the original $M$, and its composed scale is $o(1)$
relative to the initial chart.

By \eqref{eq:lt-pw-representation}, this profile is represented
by an accepted PW packet at the same times $t_j$.  Thus every
positive atom contains a subordinate nonconstant PW profile
and has mass at least $\hbar_N$. 

\medskip\noindent\emph{The finite maximal family.}
For shrinking center--scale sequences
$\mathbf q=(x_j,r_j)$ and $\mathbf q'=(y_j,\lambda_j)$, put
\[
 \operatorname{Sep}_j(\mathbf q,\mathbf q')
 =\frac{r_j}{\lambda_j}+\frac{\lambda_j}{r_j}
                 +\frac{d_h(x_j,y_j)^2}{r_j\lambda_j}.
\]
After subsequences, charts are equivalent when this quantity
stays bounded and orthogonal when it tends to infinity.
Equivalent charts have limiting transitions given by Euclidean
similarities and represent the same harmonic component up to
such a change of coordinates. 

First take weak map and energy limits on the root.  Proposition
\ref{prop:lt-new-prescribed-compactness} gives a harmonic map
$Q_*:\Sigma\to N$ and strong convergence off its finite atom set.
Add a nonconstant profile whenever one exists, on a further
subsequence, in a chart orthogonal to the previously chosen ones;
represent it by an accepted PW packet using
\eqref{eq:lt-pw-representation}.  This replacement preserves
equivalence and orthogonality.  Pass to further subsequences
so that the relative scales and centers of each finite family
have limits in the extended sense.

To avoid counting the same energy more than once, we choose in the
root chart and in each selected PW chart compact cores obtained by
removing small neighborhoods of all child marks.  If a smaller chart
has a center that remains bounded in a parent chart, its physical core
lies in one of these removed neighborhoods.  Charts at comparable
scales have separated centers by orthogonality.  Hence, after passing
to a subsequence, the physical images of the chosen cores are pairwise disjoint.  Strong convergence gives
\begin{equation}
 E(Q_*)+\sum_{\alpha\in V_{\rm en}}E(Q_\alpha)\le E_\infty,
 \qquad \#V_{\rm en}\le E_0/\hbar_N
 \quad(\hbar_N<\infty).
 \label{eq:lt-new-profile-budget}
\end{equation}
Each addition costs at least $\hbar_N$, so the process stops
after finitely many steps.  At its stopping subsequence, no
chart orthogonal to the selected family, even on a further
subsequence, has a nonconstant harmonic limit.

Every selected profile is based at a root atom, since its
positive core energy collapses to its limiting root center.
Moreover, every atom of the root or of a selected component
has a selected energetic descendant: the completeness argument
finds a subordinate nonconstant profile, and maximality forces
it to be equivalent to a selected one.  If there are no
nonconstant harmonic spheres, the same argument excludes root
atoms and gives strong convergence on all of $\Sigma$.

\medskip\noindent\emph{Branching and constant packet charts.}
Use one fixed isothermal coordinate near each root atom.
Temporarily write an energetic PW chart in these coordinates
as a center $z_{\alpha,j}$ and a coordinate radius
\[
 r_{\alpha,j}
 =e^{-\omega(z_{\alpha,j})}\sigma_{t_j}(P_{\alpha,j}),
 \qquad z_{\alpha,j}=\vartheta(b_{t_j}(P_{\alpha,j})).
\]

Say that $\alpha$ contains $\beta$ when
$r_{\beta,j}/r_{\alpha,j}\to0$ and
$(z_{\beta,j}-z_{\alpha,j})/r_{\alpha,j}$ stays bounded.
Containing charts are totally ordered: comparable scales with
bounded relative centers would be equivalent; otherwise the
smaller chart is contained in the larger.  Each energetic vertex
therefore has a nearest energetic ancestor, or the root.
Taking limits of relative centers gives a preliminary tree.
Its only ambiguity is that several children may have the same
attachment mark.

For such a group of children, no child contains another.
Orthogonality then implies that every pairwise distance is
much larger than the larger child radius.  Let $R_j$ be the
diameter of their centers and take one member's center as
origin.  The scale $R_j$ is much larger than each child
radius and much smaller than the parent radius (and tends to
zero for a root parent).  The normalized centers lie in
$\overline B_1$ and have diameter one.  Insert a vertex at this
scale, and repeat for each proper subgroup with a common
normalized limit.  Subgroups strictly decrease in size, so
only finitely many vertices are inserted, each with at least
two children.  These inserted charts are orthogonal to every
energetic chart; an equivalent energetic chart would already
be an intervening ancestor in the preliminary tree.

Proposition~\ref{prop:lt-new-prescribed-compactness} also applies
to the inserted charts.  Their harmonic limits are constant
by maximality.  At any vertex, an atom not at a child mark
would, by completeness, produce an energetic descendant missing
from the containment tree.  Conversely, every child mark
carries an atom because an energetic descendant has a positive
compact core collapsing to that point.  Thus the exceptional
set in each vertex chart is precisely its child marks.

These ghost vertices can themselves be represented
by accepted PW packets.  Suppose that an inserted ghost vertex has child marks
$w_1,\dots,w_k\in\overline B_1$, with $k\ge2$ and diameter one.
Choose two marks, say $w_1,w_2$, with $|w_1-w_2|=1$.  The corresponding
child atoms have masses at least $m_{\mathrm{sph}}$.  The packet
centered at the origin with radius $2$ therefore has mass at least
$2m_{\mathrm{sph}}$, while its normalized variance is bounded below
by a positive constant depending only on $m_{\mathrm{sph}}$ and
$E_0$.  The strict threshold inequalities and convergence of the
packet moments imply that the corresponding physical packet is
accepted for all sufficiently large $j$.  Its barycenter and variance
are then used as the PW chart of the ghost vertex.

We have obtained a finite rooted tree
$\mathcal T=(V,\mathcal E)$ with
$V=\{*\}\sqcup V_{\rm en}\sqcup V_{\rm gh}$.
For every non-root vertex write
\[
 b_{\alpha,j}=b_{t_j}(P_{\alpha,j}),\qquad
 \sigma_{\alpha,j}=\sigma_{t_j}(P_{\alpha,j}),\qquad
 \Psi_{\alpha,j}=\Psi^{\rm PW}_{t_j,P_{\alpha,j}}.
\]
All these packets have weight one.  With frames chosen smoothly
in each branch, every edge with non-root parent satisfies
\begin{equation}
 \frac{\sigma_{\beta,j}}{\sigma_{\alpha,j}}\longrightarrow0,
 \qquad
 \Psi_{\alpha,j}^{-1}(b_{\beta,j})
       \longrightarrow w_{\beta\alpha}\in\mathbb R^2.
 \label{eq:lt-new-edge-data}
\end{equation}
Distinct children have distinct marks.  Root marks are the
limits of their barycenters on $\Sigma$.  On every vertex the
maps converge strongly in $H^1$ locally off its child marks,
to $Q_\alpha$, nonconstant at energetic vertices and constant at
ghost vertices.  The harmonic root may be constant or nonconstant.

\medskip\noindent\emph{The exact partition and the edge energies.}
Set $\sigma_{*,j}=1$ only as a convention for the root holes.
For integers $m\ge m_0$ define the following physical regions,
using the geodesic balls of $h$:
\begin{align*}
 C_{*,j}^m
   &=\Sigma\setminus\bigcup_{\beta\in\operatorname{ch}(*)}
                          \overline B_{1/m}(b_{\beta,j}),\\
 C_{\alpha,j}^m
   &=B_{m\sigma_{\alpha,j}}(b_{\alpha,j})
       \setminus\bigcup_{\beta\in\operatorname{ch}(\alpha)}
               \overline B_{\sigma_{\alpha,j}/m}(b_{\beta,j})
                       \qquad(\alpha\ne *),\\
 N_{(\alpha,\beta),j}^m
   &=B_{\sigma_{\alpha,j}/m}(b_{\beta,j})
                  \setminus\overline B_{m\sigma_{\beta,j}}(b_{\beta,j}).
\end{align*}
Thus the centers and radii in the partition are precisely PW
barycenters and variances; the integer $m$ only exhausts the
component interiors.

Choose $m_0$ so that the child marks at each vertex are separated
by more than $4/m_0$, all finite non-root marks lie in $B_{m_0-2}$,
and the root holes lie in disjoint normal neighborhoods.
For every fixed $m\ge m_0$ and large $j$, the child holes are
disjoint inside the parent outer ball.  Equation
\eqref{eq:lt-new-edge-data} also gives
$m\sigma_{\beta,j}<\sigma_{\alpha,j}/m$ for every edge.
Splitting each parent hole into its edge annulus and child outer
ball, and repeating down the finite tree, gives
\begin{equation}
 \Sigma=\coprod_{\alpha\in V}C_{\alpha,j}^m
          \coprod_{e\in\mathcal E}N_{e,j}^m
       \quad\text{up to boundaries of zero energy measure}.
 \label{eq:lt-new-partition}
\end{equation}
A diagonal subsequence makes these statements valid for
$m_0\le m\le j$.  The cores increase with $m$ and the edge
annuli decrease, wherever defined.

In a non-root PW chart,
$\sigma_{\alpha,j}^{-2}\Psi_{\alpha,j}^*h\to|d\xi|^2$
smoothly on compact sets.  Its moving core therefore tends to
\[
 K_\alpha^m
   =B_m\setminus\bigcup_{\beta\in\operatorname{ch}(\alpha)}
                         \overline B_{1/m}(w_{\beta\alpha}).
\]
On the root let $K_*^m$ be the complement of the geodesic
$1/m$-balls about its limiting child marks.  For fixed $m$,
these cores and their boundaries stay away from the marks.
Strong $H^1$ convergence and smooth convergence of the
background chart metrics give
\begin{equation}
 E_h(f(t_j);C_{\alpha,j}^m)\longrightarrow E(Q_\alpha;K_\alpha^m).
 \label{eq:lt-new-core-energies}
\end{equation}
Indeed, in dimension two the constant factor
$\sigma_{\alpha,j}^2$ cancels from the energy; the remaining
metric coefficients converge smoothly, the energy densities
converge in $L^1$, and the moving boundaries converge to
circles of zero limiting energy measure.  This reasoning
also proves the vanishing of ghost energy.

After a further diagonal subsequence,
$q_e(m)=\lim_{j\to\infty}E_h(f(t_j);N_{e,j}^m)$ exists for all
edges and integers $m\ge m_0$.  These numbers belong to $[0,E_0]$
and are nonincreasing in $m$.  Define
\begin{equation}
 \mathcal E_e^{\rm edge}
 :=\lim_{m\to\infty}\lim_{j\to\infty}E_h(f(t_j);N_{e,j}^m)
 =\inf_{m\ge m_0}q_e(m)\ge0.
 \label{eq:lt-new-edge-energy}
\end{equation}
For this fixed tree and subsequence, replacing the nested cutoffs
by mutually cofinal ones gives the same value, by the two
monotonicity inequalities.  No independence from a different
time subsequence or profile extraction is claimed.

For each fixed $m$, the partition \eqref{eq:lt-new-partition}
and convergence \eqref{eq:lt-new-core-energies} yield
\[
 E_\infty=\sum_{\alpha\in V}E(Q_\alpha;K_\alpha^m)
                          +\sum_{e\in\mathcal E}q_e(m).
\]
Letting $m\to\infty$ exhausts the smooth components away from
their finitely many marks and infinity.  The sums are finite,
the ghost energies are zero, and the edge terms decrease to
\eqref{eq:lt-new-edge-energy}.  Together with completeness and
\eqref{eq:lt-new-profile-budget}, this proves the following theorem.

\begin{theorem}[Long-time PW profiles and the vertex--edge decomposition]
\label{thm:lt-vertex-edge-decomposition}
Let $f$ be the global smooth GPHF solution with the fixed
construction and coupling of Sections~2--4.  Every sequence
$t_j\to\infty$ has a subsequence with a harmonic root map
$Q_*:\Sigma\to N$ and a finite rooted marked tree
$\mathcal T=(V,\mathcal E)$ such that:
\begin{enumerate}
 \item Every non-root vertex is represented by accepted packets
 $P_{\alpha,j}$ from the original PW construction, with
 $W_{f(t_j)}(P_{\alpha,j})=1$ and
 $\sigma_{t_j}(P_{\alpha,j})\to0$.  In their charts
 \eqref{eq:lt-pw-chart},
 \[
 f(t_j)\circ\Psi^{\rm PW}_{t_j,P_{\alpha,j}}
       \longrightarrow Q_\alpha
       \quad\text{strongly in }H^1_{\rm loc}
                       (\mathbb R^2\setminus Z_\alpha),
 \]
 where $Z_\alpha$ is the finite set of child marks.
 The limit is a nonconstant harmonic sphere for
 $\alpha\in V_{\rm en}$ and constant for
 $\alpha\in V_{\rm gh}$.  The root convergence is strong in
 $H^1$ locally away from its child marks.
 Each ghost has at least two children.
 \item The energetic packet charts are pairwise orthogonal and
 maximal: on no further subsequence does a shrinking chart
 orthogonal to all of them have a nonconstant harmonic limit.
 Every positive atom in any selected vertex chart or on the
 root has a selected energetic descendant and mass at least
 $\hbar_N$.  In particular,
 $\#V_{\rm en}\le E_0/\hbar_N$ if $\hbar_N<\infty$.
 \item For the edge energies \eqref{eq:lt-new-edge-energy},
 \begin{equation}
 E_\infty=E(Q_*)+
           \sum_{\alpha\in V_{\rm en}}E(Q_\alpha)
             +\sum_{e\in\mathcal E}\mathcal E_e^{\rm edge}.
 \label{eq:lt-vertex-edge-identity}
 \end{equation}
 If $N$ has no nonconstant harmonic sphere, the tree consists
 only of the root and $f(t_j)\to Q_*$ strongly in $H^1(\Sigma)$.
\end{enumerate}
Figure~\ref{fig:pw-bubble-tree-scheme} illustrates the scale--position
relations and the vertex and neck energies in a ``Mickey Mouse''
configuration, where a constant ghost component $\alpha$ connects
two nonconstant harmonic bubbles $\beta$ and $\gamma$.
\end{theorem}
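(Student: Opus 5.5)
The theorem collects the construction of the preceding subsections, so the plan is to assemble those steps in order along one fixed subsequence.

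\emph{Root.} First pass to a subsequence of $t_j$ on which $f(t_j)$ converges weakly in $H^1(\Sigma)$ and the energy measures converge weakly. Apply Proposition~\ref{prop:lt-new-prescribed-compactness} in its root form. This gives a harmonic $Q_*$, a finite atom set, strong convergence off the atoms, and the atomic decomposition \eqref{eq:lt-new-atomic-measure}. The key input is the local transport Lemma~\ref{lem:lt-new-transport} together with the mobility comparison of Lemma~\ref{lem:lt-new-green-comparison}.

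\emph{Energetic vertices.} Next run the maximal-family selection. Each time a nonconstant profile appears in a chart orthogonal to those already chosen, add it and represent it by an accepted PW packet through \eqref{eq:lt-pw-representation}. This uses the harmonic-sphere block estimate of Lemma~\ref{lem:universal-sphere-packet} and the compatibility \eqref{eq:finite-time-packet-compatibility}, which give $W=1$. Disjointness of the physical cores yields the budget \eqref{eq:lt-new-profile-budget}. Since each addition costs at least $\hbar_N$, the process terminates, which proves item~2 apart from completeness. Completeness at atoms follows from the first-crossing rescaling argument: by \eqref{eq:lt-new-atom-decrease} each iteration lowers the atom mass by at least $\varepsilon_*/4$, so a nonconstant subordinate profile must appear. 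Maximality then makes that profile equivalent to a selected one, so every atom has mass at least $\hbar_N$ and a selected descendant.

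\emph{Tree and ghosts.} Order the charts by containment to get a preliminary tree. Wherever several children share an attachment mark, insert ghost vertices at the diameter scale of their centers; each ghost has at least two children, and only finitely many are inserted. Ghosts have constant limits by maximality. The completeness argument shows that the exceptional sets are exactly the child marks, which gives item~1. I expect the main obstacle to be the claim that ghost vertices are represented by accepted packets. One must check that the packet at the origin with radius comparable to the diameter has mass at least $2m_{\rm sph}>\varepsilon_{\rm pkt}/2$, and that its normalized variance stays above $\rho_+$ uniformly. The variance bound is where I would work hardest: I would use that the two child atoms sit at distance $1$ apart, which forces $\sigma^2\gtrsim m_{\rm sph}^2/E_0^2$ by the two-point variance inequality. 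Then I would invoke the weak convergence of packet moments together with strict convexity of the limiting Fréchet functional, so that the barycenters and variances converge.

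\emph{Energy identity.} Finally, build the partition \eqref{eq:lt-new-partition} from PW barycenters and variances, using \eqref{eq:lt-new-edge-data} to nest the holes. On each core, strong convergence gives \eqref{eq:lt-new-core-energies}, ghost energies vanish, and the neck limits $q_e(m)$ are monotone in $m$. Summing for fixed $m$ and letting $m\to\infty$ gives \eqref{eq:lt-vertex-edge-identity}, which is item~3. If $N$ has no harmonic spheres, $\hbar_N=\infty$, so there are no atoms and $f(t_j)\to Q_*$ strongly in $H^1(\Sigma)$.
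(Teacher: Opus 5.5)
Your outline follows the paper's proof step by step: root compactness from the transport and mobility lemmas, PW representation via Lemma~\ref{lem:universal-sphere-packet} and \eqref{eq:finite-time-packet-compatibility}, first-crossing completeness at atoms, the maximal orthogonal family, containment ordering with ghost insertion, and the PW-centered partition with monotone neck limits. The approach is therefore the same as the paper's.

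One step does not go through as written, and it is the one you flagged. The two-point inequality $\sigma^2\ge p_1p_2|w_1-w_2|^2$, with $p_i=m_i/M$, gives only $\rho^2\ge m_{\mathrm{sph}}^2/(4E_0^2)$ for the radius-$2$ ghost packet. Acceptance needs $\rho^2>\rho_+^2=\varepsilon_{\mathrm{pkt}}/(16E_0)$. But \eqref{eq:finite-time-packet-compatibility} only bounds $\varepsilon_{\mathrm{pkt}}$ by a multiple of $m_{\mathrm{sph}}$, so your bound fails once $E_0$ is large compared with $m_{\mathrm{sph}}$. Because $\rho_+$ is fixed in advance, an unspecified positive constant is not enough; you need a bound linear in $m_{\mathrm{sph}}/E_0$.

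To get it, minimize the two-atom moment directly. Since $\chi_{0,2}=1$ on $\overline B_1$, for every $b$
\[
 \int\chi_{0,2}(y)|y-b|^2\,d\nu(y)
 \ge m_1|w_1-b|^2+m_2|w_2-b|^2
 \ge\frac{m_1m_2}{m_1+m_2}|w_1-w_2|^2
 \ge\frac{m_{\mathrm{sph}}}{2}.
\]
With $M_\nu(0,2)\le E_0$ this gives $\rho^2\ge m_{\mathrm{sph}}/(8E_0)>\varepsilon_{\mathrm{pkt}}/(16E_0)=\rho_+^2$, using $\varepsilon_{\mathrm{pkt}}<m_{\mathrm{sph}}$.

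The atom masses $m_i\ge m_{\mathrm{sph}}$, which you used without justification, hold for the following reason. Each atom contains a subordinate nonconstant sphere $\omega$ of energy at most its mass, hence at most $E_0$. Lemma~\ref{lem:universal-sphere-packet} then gives $E(\omega)\ge M_\omega(z_\omega,r_\omega)\ge m_{\mathrm{sph}}$.

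With these strict margins, convergence of the packet moments gives $W=1$ for large $j$, exactly as in the paper.
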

\begin{figure}[H]
  \centering
  \includegraphics[width=\textwidth]{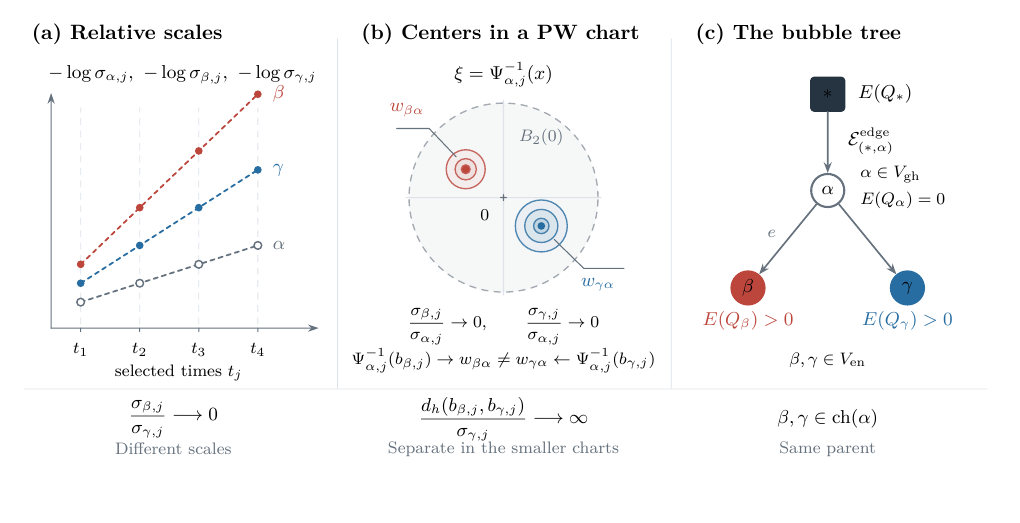}
  \caption{Scale and position determine the PW bubble tree.
  (a) Packet scales along a common selected sequence $t_j\to\infty$;
  dashed lines only connect the displayed samples.
  (b) In the chart $\Psi_{\alpha,j}$, the smaller packets shrink towards
  distinct child marks $w_{\beta\alpha}$ and $w_{\gamma\alpha}$.
  The translucent disks illustrate decreasing relative radii.
  (c) The corresponding branching component $\alpha\in V_{\rm gh}$
  is constant, while $\beta,\gamma\in V_{\rm en}$ are harmonic bubbles.
  Although $\sigma_{\beta,j}/\sigma_{\gamma,j}\to0$, their normalized
  separation diverges, so they are siblings. Vertex energies and
  neck energies give the energy identity displayed below the panels.}
  \label{fig:pw-bubble-tree-scheme}
\end{figure}

We conclude with a corotational disk example with $\gamma=2$. For the same initial and boundary data,
the ordinary harmonic map heat flow develops a finite-time singularity,
whereas the GPHF remains smooth and develops two separated harmonic
profiles at infinite time, with no-neck property. The proof is given in
the companion note \cite{GPHFCriticalRefinement}.

Let $D=B_1\subset\mathbb R^2$, and equip $N=S^2\times S^2$ with
\[
 h_N=h_{S^2_\alpha}+\mathfrak f(\alpha)h_{S^2_\beta},
 \qquad \mathfrak f(\alpha)=1+\epsilon_w g(\alpha).
\]
Here $g:[0,\pi]\to[0,1]$ is smooth, equals zero on $[0,\pi/2]$
and one on $[3\pi/4,\pi]$, and satisfies $g'>0$ on
$(\pi/2,3\pi/4)$. In the corotational class write
\[
 u(r,\theta,t)=(\alpha(r,t),\theta,\beta(r,t),\theta).
\]
Fix $b=\pi+\delta$, where $0<\delta<\pi/4$, with center and boundary
values
\[
 \alpha(0,t)=\beta(0,t)=0,
 \qquad \alpha(1,t)=\pi,\quad \beta(1,t)=b.
\]
Put
\[
 Q_\ell(r)=2\arctan(r/\ell),\qquad
 H_b(r)=\pi+2\arctan\!\bigl(r\tan(\delta/2)\bigr).
\]
Choose a smooth nondecreasing cutoff $\vartheta$ equal to zero on
$[0,1/2]$ and one on $[3/4,1]$. The initial map is
\[
 \begin{aligned}
  \alpha_0(r)&=(1-\vartheta(r))Q_{R_0}(r)+\vartheta(r)\pi,\\
  \beta_0(r)&=H_b(r)-(1-\vartheta(r))
                              \bigl(\pi-Q_{\lambda_0}(r)\bigr).
 \end{aligned}
\]
Thus the two transitions have initial scales
$0<\lambda_0\ll R_0\ll1$, and the map agrees with the stationary
root $(\pi,H_b)$ near the boundary.

Use the PW construction of Section~2 with the Dirichlet heat and
Green kernels on $D$, and denote the resulting source by
$S_u^{\mathrm{PW}}$. Fix the packet cutoffs to accept the standard
degree-one sphere $\mathcal Q(r,\theta)=(Q_1(r),\theta)$, and normalize
the source by a fixed multiplicative constant so that its total
weight for this isolated profile is $A_Q=2\pi$. Taking $\kappa=1$
then gives the critical calibration $\gamma_Q=2.$ We compare the original flow with its GPHF version: 
\[
 \partial_tu=\tau(u)
 \qquad\hbox{and}\qquad
 \begin{cases}
  \partial_tu=e^{-2\phi_u}\tau(u),\\
  -\Delta\phi_u=S_u^{\mathrm{PW}},\quad
                        \phi_u|_{\partial D}=0.
 \end{cases}
\]

The following
conclusions hold: 

\begin{enumerate}
\item The ordinary flow develops a singularity in finite time.
The critically normalized GPHF is smooth for every finite time.

\item For the GPHF there are smooth scales
$R(t)=e^{-S_\alpha(t)}$ and $\lambda(t)=e^{-S_\beta(t)}$ such that
\[
 R(t)\longrightarrow0,\qquad
 \frac{\lambda(t)}{R(t)}\longrightarrow0.
\]
Writing the angle functions in the coordinate $s=-\log r$ and setting
$Q(x)=2\arctan(e^{-x})$, one has
\[
 \begin{aligned}
  u(t)&\longrightarrow(\pi,H_b)
                      &&\text{on }D\setminus\{0\},\\
  (\alpha,\beta)(S_\alpha(t)+\cdot,t)
     &\longrightarrow(Q,\pi)
                      &&\text{on }\mathbb R,\\
  (\alpha,\beta)(S_\beta(t)+\cdot,t)
     &\longrightarrow(0,Q)
                      &&\text{on }\mathbb R.
 \end{aligned}
\]
These are full-time limits, locally in energy and uniformly on compact
sets. Both harmonic spheres have energy $4\pi$, and
\[
 \lim_{t\to\infty}E(u(t))
   =8\pi+\mathfrak f(\pi)E_b,
 \qquad E_b=2\pi(1+\cos b).
\]
The last term is the energy of the nonconstant root.  The convergence has the strong no-neck property: after exhausting
the root and bubble profiles, the remaining transition regions have
vanishing energy and image diameter uniformly in $t$. 

\item Along a sequence $t_j\to\infty$, the Green metric on the middle
neck is uniformly
asymptotic to the flat cylindrical metric.
\end{enumerate}
 The figure~\ref{fig:bubble_comparison} below is a schematic illustration of the comparison. The precise limits and normalization are given in\cite{GPHFCriticalRefinement}.

\begin{figure}[H]
  \centering
  \includegraphics[width=\textwidth]{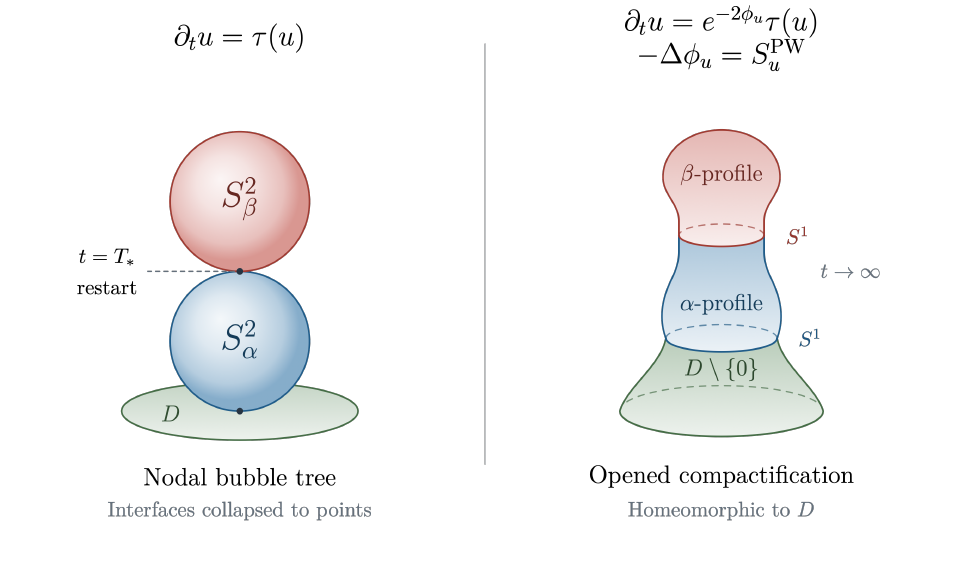}
  \caption{Keeping the circular interfaces between the root and the two profiles, and capping the terminal end, gives an opened domain homeomorphic to a disk. Collapsing those interfaces instead gives the usual nodal disk with two spherical components. }
  \label{fig:bubble_comparison}
\end{figure}

\backmatter

\section*{Statements and Declarations}

\subsection*{Funding}
The author received no specific funding for this work.

\subsection*{Competing interests}
The author declares no competing interests.

\subsection*{Data availability}
No datasets were generated or analysed in this theoretical study.

\subsection*{Use of artificial intelligence}
The author used ChatGPT and Codex (OpenAI) to assist with the development
and checking of mathematical arguments, manuscript drafting and revision, literature
searches, and the preparation of LaTeX and schematic figures. The author
takes responsibility for the mathematical arguments, references,
figures, and final manuscript.


\bibliography{GPHF_references}

\end{document}